\documentclass[12pt]{article}
\usepackage[margin=1.1in]{geometry}
\usepackage{amsmath,amssymb,amsthm,mathtools}
\usepackage{enumitem}
\usepackage{microtype}
\usepackage{graphicx}
\usepackage{booktabs}
\usepackage{xcolor}
\usepackage{placeins}
\usepackage[round]{natbib}
\usepackage[hidelinks]{hyperref}
\usepackage[nameinlink,noabbrev]{cleveref}

\newtheorem{assumption}{Assumption}
\newtheorem{theorem}{Theorem}
\newtheorem{proposition}{Proposition}
\newtheorem{corollary}{Corollary}
\newtheorem{lemma}{Lemma}
\newtheorem{observation}{Observation}
\theoremstyle{remark}
\newtheorem{remark}{Remark}

\crefname{assumption}{assumption}{assumptions}
\Crefname{assumption}{Assumption}{Assumptions}
\crefname{theorem}{theorem}{theorems}
\Crefname{theorem}{Theorem}{Theorems}
\crefname{proposition}{proposition}{propositions}
\Crefname{proposition}{Proposition}{Propositions}
\crefname{corollary}{corollary}{corollaries}
\Crefname{corollary}{Corollary}{Corollaries}
\crefname{lemma}{lemma}{lemmas}
\Crefname{lemma}{Lemma}{Lemmas}
\crefname{observation}{observation}{observations}
\Crefname{observation}{Observation}{Observations}
\crefname{remark}{remark}{remarks}
\Crefname{remark}{Remark}{Remarks}

\newcommand{\OmegaL}{\overline{\Omega}_L}
\newcommand{\OmegaLo}{\Omega_L}
\newcommand{\DeltaL}{\Delta_L}
\newcommand{\R}{\mathbb R}
\newcommand{\N}{\mathbb N}
\newcommand{\E}{\mathbb E}
\newcommand{\Pp}{\mathbb P}
\newcommand{\calB}{\mathcal B}
\newcommand{\calZ}{\mathcal Z}
\newcommand{\calM}{\mathcal M}
\newcommand{\norm}[1]{\left\lVert #1\right\rVert}
\newcommand{\ind}[1]{\mathbf 1\{#1\}}
\newcommand{\dd}{d}
\DeclareMathOperator{\Lip}{Lip}
\DeclareMathOperator{\dist}{dist}
\DeclareMathOperator{\pers}{pers}
\DeclareMathOperator{\Pers}{Pers}

\DeclareMathOperator{\Var}{Var}
\DeclareMathOperator{\OT}{OT}
\DeclareMathOperator{\adm}{adm}

\title{Weighted persistence intensity regression}
\author{%
Matteo Pegoraro\thanks{Faculty of Informatics, Universit\`a della Svizzera italiana, Via Giuseppe Buffi 13, 6900 Lugano, Switzerland. Email: \texttt{matteo.pegoraro@usi.ch}.}
\and
Mario Beraha\thanks{Department of Economics, Management and Statistics, University of Milano-Bicocca, Piazza dell'Ateneo Nuovo 1, 20126 Milan, Italy. Email: \texttt{mario.beraha@unimib.it}.}}
\date{}

\begin{document}
\maketitle

\begin{abstract}
Persistence diagrams summarize the multiscale topological structure of data, and in applications they often arrive paired with covariates. We develop nonparametric methodology and theory for estimating the expected weighted persistence diagram conditional on a Euclidean covariate. Representing each weighted diagram as a finite random measure on a compact window, we take the density of its conditional expectation as the regression target, the conditional weighted persistence intensity. For a conditional double-kernel estimator we establish finite-sample sup-norm rates with a matching minimax lower bound, uniform rates in partial optimal transport, and an unbiased-risk cross-validation criterion for bandwidth selection. Simulations with analytically known intensities corroborate the theory and show that cross-validation selects the oracle candidate bandwidth in the exact-intensity design. The method is illustrated by studying how radial geometry in cerebral artery trees varies with age.
\end{abstract}

\noindent\textbf{Keywords:} conditional intensity; kernel regression; optimal transport; persistence diagram; topological data analysis.

\section{Introduction}\label{sec:intro}

Many modern data sets consist of complex objects rather than observations in a
fixed-dimensional Euclidean space: a single observation may be a network, a point cloud, a surface, an image, a function, or a spatial field.
The scientifically relevant variation may concern its geometric organization (connectivity, branching, loops, cavities, or spatial arrangement) and the observed objects may also differ because of position, orientation, parametrization, or discretization. Topological data analysis provides tools for extracting multiscale geometric information while allowing scientifically irrelevant variation to be removed or made invariant at the representation stage
\citep{edelsbrunner2010computational,wasserman2018topological,
pegoraro2026functional,domanin2024persistence}.
Among these tools, persistence diagrams are one of the most commonly used representations for summarizing the multiscale geometric structure of complex data. A persistence diagram is typically obtained for each statistical unit and, in many applications, is observed together with explanatory covariates. This naturally motivates statistical methods that relate variation in the diagrams themselves to variation in those covariates.

Accordingly, in this paper, we consider independent pairs $(Z_1,D_1),\ldots,(Z_n,D_n)$, where $D_i$ is a persistence diagram and $Z_i\in\calZ\subset\R^d$, and study a basic regression question: how does the average topological structure represented by $D_i$ vary with $Z_i$?
Existing statistical work has primarily considered diagrams drawn from a single
unconditional distribution, or has used diagrams as explanatory variables after mapping them into a vector space. We instead develop a regression framework in which the persistence diagram is the response.

A running example is provided by the cerebral artery data of \citet{bullitt2010effects}.
The data contain three-dimensional arterial trees reconstructed from $98$ subjects, together with subject-level information including age.
Existing analyses have shown that topological summaries of these arterial trees are associated with age \citep{bendich2016persistent,biscio2019accumulated}.
Our regression framework allows us to determine which regions of the persistence diagram account for this association. We also map the corresponding topological features back to the arterial structures that generated them. \Cref{fig:intro-schematic} illustrates the construction.

\begin{figure}[ht!]
    \centering
    \includegraphics[width=\linewidth]{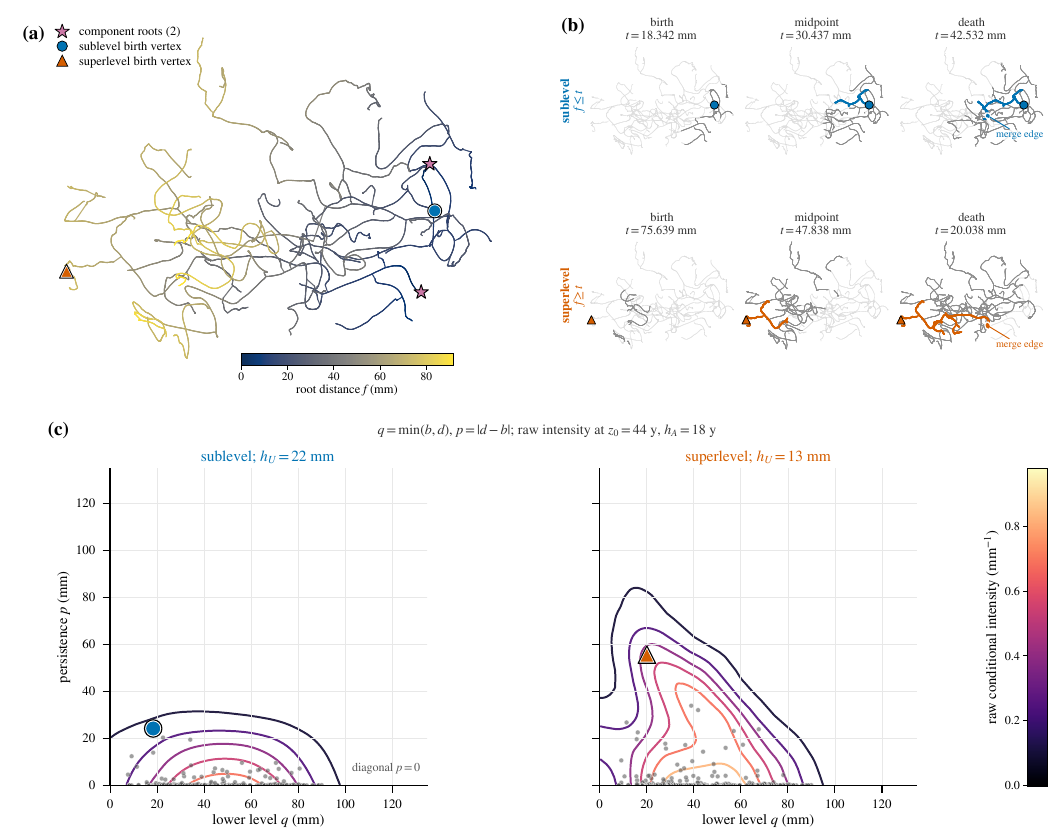}
    \caption{Cerebral-artery pipeline for the deterministic representative NormalA074, arterial system 3. (a) The tree is colored by Euclidean distance from its two deposited connected-component roots; colored symbols show the birth vertex and merge edge of the most persistent valid pair in each filtration. (b) Exact sublevel and superlevel snapshots at the selected birth, death, and contextual thresholds. (c) Finite $H_0$ diagrams in $q=\min(b,d)$, $p=|d-b|$ coordinates (mm), with the selected pairs and conditional-intensity contours at $z_0=44$ years. The fitted bandwidths are $h_A=18$ years and $h_U=22$ mm (sublevel), 13 mm (superlevel).}
    \label{fig:intro-schematic}
\end{figure}

Observe from \Cref{fig:intro-schematic} that each persistence diagram is a collection of points in the plane. Following \citet{chen2015persistence, chazal2019density, divol2021estimation, wu2024estimation}, we represent each diagram as a discrete measure, so that its expectation can be understood as an expected measure, or first moment measure \citep{daley2008introduction}. This provides a natural way to formulate the regression problem above: our inferential target is the expected diagram measure conditional on the covariates. We propose a simple double-kernel Nadaraya--Watson estimator of this conditional mean measure. When it admits a density with respect to Lebesgue measure, we establish minimax-optimal convergence rates for the corresponding conditional intensity in supremum norm; more generally, we obtain faster convergence rates directly for the conditional mean measure in partial optimal transport \citep{divol2021estimation,wu2024estimation}, without requiring absolute continuity. These ideas are developed in more detail below.

\subsection{Persistence diagrams as statistical data}

Persistence diagrams arise from persistent homology, which studies the evolution of topological features (such as connected components, loops, and higher-dimensional cavities) along a filtration, namely a nested family of representations indexed by a scale parameter. The birth and death scales of these features are recorded in the persistence diagram as a multiset of points $(b,d)$ with $b<d$. Further background on filtrations, persistent homology, and persistence diagrams is provided by \citet{edelsbrunner2010computational}.

Persistence diagrams possess a natural but nonlinear geometry. Wasserstein-type distances compare their points through partial matchings, allowing unmatched features to be sent to the diagonal and thereby accommodating diagrams with different cardinalities. For suitable constructions, stability results show that the map from the original statistical units to their persistence diagrams is Lipschitz with respect to such distances \citep{cohen2007stability,cohen2010lipschitz}. In other words, small perturbations of the filtered object lead to controlled perturbations of its diagram in these metrics. These distances therefore provide a natural geometry for comparing persistence diagrams.

Partial optimal transport geometry supports direct comparisons and Fr\'echet-type summaries \citep{mileyko2011probability,turner2014frechet}, but makes standard linear statistical operations less immediate. A large literature therefore maps diagrams into vector or function spaces before applying conventional statistical or machine-learning tools.
Examples include persistence landscapes \citep{bubenik2015statistical}, persistence
images \citep{adams2017persistence}, persistence spheres
\citep{pegoraro2026persistence}, accumulated persistence functions
\citep{biscio2019accumulated}, kernel embeddings
\citep{reininghaus2015stable,kusano2016persistence,carriere2017sliced},
and learned representations \citep{hofer2017deep}. In most of this literature the
diagram is used as an explanatory variable for classification or scalar-response
prediction.

A different inferential question concerns the persistence diagrams themselves. Classical statistical TDA often studies the uncertainty involved in estimating the persistence diagram associated with a single underlying object from finite or noisy data, with inference aimed at distinguishing persistent features from sampling noise
\citep{fasy2014confidence}. In the repeated-object setting considered here, instead, a persistence diagram is obtained for each statistical unit, and inference concerns the distribution of these diagrams across statistical units.

Under the measure interpretation introduced above, one natural way to summarize this distribution is through its first moment, the expected persistence measure, and, when this measure admits a density, through the corresponding persistence intensity. Kernel estimators of persistence intensities were introduced by \citet{chen2015persistence}; \citet{chazal2019density} established conditions for the existence and smoothness of the density of an expected persistence measure; \citet{divol2021estimation} studied estimation of the expected persistence measure directly in partial optimal transport; and \citet{wu2024estimation} obtained uniform rates for persistence intensity and normalized persistence-density functions. At a different level, Bayesian nonparametric models have been proposed to describe the full law of a random persistence diagram
\citep{maroulas2019nonparametric,maroulas2020bayesian}.

Although they target different aspects of the distribution of a random diagram, these works share with ours the same repeated-diagram sampling perspective. The statistical unit carries a persistence diagram, and accuracy improves as the number of independent diagrams increases, while the amount of raw data used to construct any individual diagram may remain fixed. The distinction is that these works consider diagrams drawn from a single unconditional distribution, whereas we introduce explanatory covariates and study a family of conditional expected measures indexed by $z\in\calZ$.

\subsection{Overview of the methodology and theoretical results}

We now make this conditional extension precise. Following the measure interpretation introduced above, let $w$ be a nonnegative weight on the persistence plane that vanishes on the diagonal. For a persistence diagram $D$, we define the weighted persistence measure and its conditional mean measure by
\[
\mu_D^w
=
\sum_{x\in\operatorname{supp}(D)}
a_x w(x)\delta_x,
\qquad
a_x\in\mathbb{N},
\qquad
\Lambda_z^w(A)
:=
\mathbb{E}\!\left[\mu_D^w(A)\mid Z=z\right].
\]
Diagonal-vanishing weights are standard in the construction of stable linear representations of persistence diagrams
\citep{kusano2016persistence,divol2019choice}: they control the contribution of features with very low persistence, which may occur in large numbers near the diagonal, without requiring them to be discarded.

The conditional mean measure $\Lambda_z^w$ has a direct interpretation. For every region $A$ of the persistence plane, $\Lambda_z^w(A)$ is the expected weighted mass of persistence features falling in $A$ among statistical units with covariate value $z$. It is the conditional analogue of the expected persistence measure studied by
\citet{chen2015persistence,chazal2019density,divol2021estimation,
wu2024estimation}.

When $\Lambda_z^w$ is absolutely continuous with respect to Lebesgue measure on the persistence plane, we call its density the conditional weighted persistence intensity, in line with the existing literature, and estimate it using the double-kernel Nadaraya--Watson--type estimator anticipated above. Informally, our estimator first smooths each diagram over the persistence coordinates and then averages the resulting surfaces over statistical units whose covariates are close to $z$. This yields a valid estimator that is straightforward to compute and requires no parametric model for the full distribution of a random diagram.

Under this assumption, we study recovery of the conditional weighted persistence intensity in supremum norm, uniformly over both the covariate and persistence domains. Although demanding, this criterion provides local control: a discrepancy concentrated in a small region of the persistence plane may correspond to a scientifically meaningful class of topological features. It also connects intensity estimation to other standard summaries and geometries. The result of \citet{wu2024estimation} implies that the supremum-norm distance between two persistence intensities controls the partial optimal transport distance between their corresponding mean measures, so supremum-norm consistency guarantees convergence under one of the main geometries used to compare persistence measures. Supremum-norm control further yields uniform guarantees for expected linear representations, including persistent Betti summaries and persistence surfaces. Under anisotropic H\"older regularity, our finite-sample bound gives the rates in \Cref{cor:holder-rate}; in the isotropic case of smoothness $s$, the rate is $(\log n/n)^{s/(2s+d+2)}$, the classical exponent for nonparametric regression in $d+2$ effective dimensions. A matching lower bound on the bounded-weighted-mass model class, together with the class-uniform upper bound furnished by the same estimator, shows that this rate is minimax optimal in supremum norm \Cref{thm:lower-bound}.

Note, however, that absolute continuity is needed only to define the conditional weighted persistence intensity, not to construct the estimator from the observed diagrams. Indeed, the same kernel-smoothed surface $\widehat\lambda_{w,n}(z,\cdot)$ is always defined from the observed diagrams and induces the measure-valued estimator
\[
\widehat\Lambda_{z,n}^{w,h_U}(A)
:=
\int_A \widehat\lambda_{w,n}(z,u)\,\dd u,
\qquad
A\in\calB(\OmegaL).
\]
We also study $\widehat\Lambda_{z,n}^{w,h_U}$ directly as an estimator of $\Lambda_z^w$ in partial optimal transport. Rather than requiring pointwise recovery of a Lebesgue density, this formulation compares the estimated and target measures and allows the conditional mean measure to contain atoms, be supported on lower-dimensional sets, or otherwise be singular with respect to Lebesgue measure. This occurs, for example, for standard $H_0$ \v{C}ech and Vietoris--Rips filtrations, where all classes are born at zero and the conditional mean measure can be supported on the segment $\{(0,d):0<d\leq L\}$. Beyond this increased generality, the direct measure-level analysis yields a different statistical rate: under a H\"older condition on the map $z\mapsto\Lambda_z^w$ in partial optimal transport, the stochastic rate for the $q$th-power partial-transport loss depends on the covariate dimension $d$, but not on the two-dimensional persistence coordinate.

In line with the existing literature, we also consider a normalized version of the weighted persistence measure, obtained by dividing each diagram measure by its total weighted mass before taking the conditional expectation. This leads to a related but distinct inferential target: whereas the unnormalized target records both the amount and the location of expected weighted persistence mass, the normalized target describes its average relative distribution within a diagram. The same techniques yield uniform convergence results for the corresponding conditional density (\Cref{thm:normalized-rate}).

For practical implementation, we derive an unbiased leave-one-out criterion for jointly selecting the covariate and persistence bandwidths (\Cref{prop:cv-risk}). We also relax some of the assumptions used in the main uniform theory: a truncation argument replaces the bounded-mass assumption with suitable moment conditions (\Cref{prop:truncation}), while trimming and clipping yield integrated consistency under weaker assumptions on the covariate design (\Cref{prop:trimmed}). The simulation studies examine estimation accuracy and bandwidth selection.

\subsection{Application to cerebral artery trees}

We now return to the cerebral artery data used to motivate the regression problem above. We apply the methodology to the data of \citet{bullitt2010effects}, using age as the explanatory covariate. For each rooted arterial-tree component, we compute the extended persistence of the Euclidean distance from the supplied root. Its finite sublevel and superlevel parts describe complementary local geometric events: inward-to-outward and outward-to-inward radial reversals, respectively, and are analysed separately.

The application asks whether age-associated changes can be localized within these persistence summaries and related back to the geometry of the original arterial trees. We estimate conditional intensities both for the raw persistence-weighted measures and after standardizing each arterial system by its reconstructed centreline length, where the latter analysis asks which changes remain when persistence mass is expressed per unit of observed vasculature. Simultaneous age-contrast bands are then used to identify regions of the persistence plane whose expected mass varies with age. Via the generating vertex of each persistence pair, selected regions can be mapped exactly back to the corresponding radial reversals on the three-dimensional centrelines. In particular, the analysis isolates a localized age-associated decrease among outward-to-inward radial reversals after system-length standardization. The resulting findings are compared with the conclusions of existing analyses of this benchmark data set in \Cref{sec:arteries-literature}.

\section{Conditional weighted intensity regression}\label{sec:estimator}

\subsection{Weighted persistence measures}\label{sec:framework}

Fix a filtration scale bound $L>0$ and write
\[
\OmegaLo:=\{(b,d):0\leq b<d\leq L\},\qquad
\OmegaL:=\{(b,d):0\leq b\leq d\leq L\},
\]
with diagonal boundary
\[
\DeltaL:=\{(b,b):0\leq b\leq L\}.
\]

For $u=(b,d)\in\OmegaL$, define its persistence (or lifetime) by
\[
\pers(u):=d-b.
\]
Thus $\dist_2(u,\DeltaL)=\pers(u)/\sqrt{2}$ for the Euclidean norm and
$\dist_\infty(u,\DeltaL)=\pers(u)/2$ for the max norm. A persistence diagram supported in $\OmegaLo$ is identified with the locally finite counting measure
\[
\mu_D=\sum_{x\in D} a_x\delta_x,\qquad a_x\in\N,
\]
where multiplicities are included in $a_x$.

A diagonal-aware weight is a measurable map
\[
w:\OmegaL\to[0,\infty),\qquad w|_{\DeltaL}=0,
\]
and the corresponding weighted persistence measure is
\[
\mu_D^w(A):=\int_A w(u)\dd\mu_D(u)=\sum_{x\in D\cap A}a_xw(x),\qquad A\in\calB(\OmegaL).
\]
Unlike $\mu_D$, the weighted measure $\mu_D^w$ is not usually a counting measure.

Typical examples are
\[
w(b,d)=\min(1,a(d-b)^\tau),\qquad \tau\geq 1, a>0,
\]
or, in birth-persistence coordinates $(b,p)=(b,d-b)$,
\[
w(b,p)=\min(1,ap^\tau).
\]
Such weights are standard in the stability theory of linear representations of persistence diagrams \citep{kusano2016persistence,divol2019choice}.

\subsection{The estimand}

We observe independent and identically distributed pairs
\[
(Z_i,D_i),\qquad i=1,\ldots,n,
\]
where $Z_i\in\calZ\subset\R^d$ is a covariate and $D_i$ is a persistence diagram. Let $\mu_i^w:=\mu_{D_i}^w$. The conditional weighted mean measure is
\[
\Lambda_z^w(A):=\E[\mu_i^w(A)\mid Z_i=z],\qquad A\in\calB(\OmegaL).
\]
When $\Lambda_z^w$ is absolutely continuous with respect to Lebesgue measure on $\OmegaL$, write
\[
\Lambda_z^w(A)=\int_A \lambda_w(z,u)\dd u.
\]
The target $\lambda_w(z,u)$ is the conditional weighted persistence intensity. If an unweighted conditional intensity $\lambda(z,u)$ exists, then $\lambda_w(z,u)=w(u)\lambda(z,u)$. Existence and smoothness of (unconditional) persistence intensities for diagrams built from random point clouds are established in \citet{chazal2019density}.

\subsection{Double-kernel estimator and boundary correction}\label{sec:nw}

Let $K_Z:\R^d\to[0,\infty)$ and $K_U:\R^2\to[0,\infty)$ be kernels. Define the normalizing factors
\[
c_{Z,h}(z):=\int_\calZ h^{-d}K_Z\!\left(\frac{z'-z}{h}\right)\dd z',\qquad
c_{U,h}(u):=\int_{\OmegaL} h^{-2}K_U\!\left(\frac{v-u}{h}\right)\dd v,
\]
and normalized kernels
\begin{equation}\label{eq:normalized-kernels}
\kappa_{Z,h}(z,z'):=\frac{h^{-d}K_Z((z'-z)/h)}{c_{Z,h}(z)},\qquad
\kappa_{U,h}(u,v):=\frac{h^{-2}K_U((v-u)/h)}{c_{U,h}(u)}.
\end{equation}
As shown in \Cref{obs:kernel-bounds}, \Cref{ass:kernels,ass:geometry-Z}
guarantee that the normalizing factors are finite and bounded away from zero
uniformly over their respective domains for all sufficiently small bandwidths,
so that
\begin{equation}\label{eq:kernel-normalization}
\int_\calZ \kappa_{Z,h}(z,z')\dd z'=1,
\qquad
\int_{\OmegaL}\kappa_{U,h}(u,v)\dd v=1.
\end{equation}
We use $h_Z$ for the covariate-space bandwidth and $h_U$ for the persistence-plane bandwidth. For each weighted diagram, define its kernelized response through kernel smoothing,
\[
Y^w_{i,h_U}(u):=\int_{\OmegaL}\kappa_{U,h_U}(u,v)\dd\mu_i^w(v),
\]
and, at locations $z$ where the denominator is positive, define the Nadaraya--Watson estimator \citep{nadaraya1964estimating,watson1964smooth}
\begin{equation}\label{eq:nw-estimator}
\widehat\lambda_{w,n}(z,u)
:=\frac{\sum_{i=1}^n \kappa_{Z,h_Z}(z,Z_i)Y^w_{i,h_U}(u)}{\sum_{i=1}^n \kappa_{Z,h_Z}(z,Z_i)}.
\end{equation}
When the denominator is zero, set $\widehat\lambda_{w,n}(z,u)=0$. Under the assumptions of \Cref{thm:supnorm-rate}, the denominator is uniformly positive on the high-probability event used in the proof, so this convention does not affect the stated bound.

For fixed $z$, the factor $h_Z^{-d}/c_{Z,h_Z}(z)$ is common to every term in the numerator and denominator of \eqref{eq:nw-estimator}, and therefore cancels exactly. Thus, whenever the denominator is nonzero,
\[
\widehat\lambda_{w,n}(z,u)
=\frac{\sum_{i=1}^n K_Z((Z_i-z)/h_Z)Y^w_{i,h_U}(u)}
{\sum_{i=1}^n K_Z((Z_i-z)/h_Z)}.
\]
Thus the normalization of the covariate kernel is an algebraic convenience and does not change the untrimmed Nadaraya--Watson estimator. We retain this normalization because, after division by $n$, the expected denominator remains bounded above and below by constants independent of $h_Z$. Without the factor $h_Z^{-d}$, its expectation would be proportional to $h_Z^d$ and would therefore vanish as $h_Z\to0$. This rescaling simplifies the proof bounds and the formulation of the denominator-trimming rule in \Cref{sec:trimmed}. In contrast, $c_{U,h_U}(u)$ appears inside the kernelized response $Y^w_{i,h_U}(u)$ and does not cancel. It corrects for the portion of the persistence-plane kernel that falls outside $\OmegaL$ when $u$ is near the boundary, preventing attenuation from kernel truncation and permitting the stated uniform bias bound up to that boundary.
Expanding the kernelized response also gives
\[
\widehat\lambda_{w,n}(z,u)
=\frac{\sum_i\kappa_{Z,h_Z}(z,Z_i)\sum_{x\in D_i}a_xw(x)\kappa_{U,h_U}(u,x)}{\sum_i\kappa_{Z,h_Z}(z,Z_i)}.
\]
Under the compact-support condition of \Cref{ass:kernels}, only observations satisfying $\|Z_i-z\|\leq h_Z$ receive nonzero covariate weight. The denominator normalizes these covariate weights; it does not normalize by the number of points in each persistence diagram. Dividing by the total diagram mass would change the estimand from an intensity to a normalized density, the object studied separately in \Cref{sec:normalized}.

\subsection{Assumptions}\label{sec:assumptions}

The following group of assumptions yields a clean uniform theorem;
\Cref{sec:truncation} explains weaker variants.

\begin{assumption}[Admissible diagonal weight]\label{ass:weight}
The function $w:\OmegaL\to[0,\infty)$ is bounded and Lipschitz. More generally, when a $\tau$-total-persistence condition is used, we allow
\begin{equation}\label{eq:weight-order}
w(u)\leq C_w\pers(u)^\tau
\end{equation}
for some $\tau>0$.
\end{assumption}

\begin{assumption}[Kernels and bandwidth sequences]\label{ass:kernels}
The kernels $K_Z:\R^d\to[0,\infty)$ and $K_U:\R^2\to[0,\infty)$ are
bounded and Lipschitz, integrate to one, and satisfy
\[
\operatorname{supp}(K_Z)\subseteq \overline B_d(0,1),
\qquad
\operatorname{supp}(K_U)\subseteq \overline B_2(0,1).
\]
They are uniformly positive on neighbourhoods of the origin: there exist
$\rho_{K,Z},\rho_{K,U},k_Z,k_U>0$ such that
\begin{align*}
K_Z(x)&\geq k_Z\quad\text{whenever }\norm{x}\leq \rho_{K,Z},\\
K_U(y)&\geq k_U\quad\text{whenever }\norm{y}\leq \rho_{K,U}.
\end{align*}
Writing $h_Z=h_{Z,n}$ and $h_U=h_{U,n}$, the bandwidth sequences are positive
and satisfy
\[
h_Z\to0,
\qquad
h_U\to0,
\qquad
\frac{nh_Z^d h_U^2}{\log n}\to\infty.
\]
\end{assumption}

The unit-ball support convention entails no loss of generality: any fixed
compact support, including the box support of a product kernel, can be
accommodated by a fixed rescaling of the kernel and its bandwidth. Positivity
near the origin ensures that a nonvanishing amount of kernel mass is retained
at domain boundaries. The final condition requires the effective sample size
in a joint covariate--persistence smoothing neighbourhood to dominate the
logarithmic cost of uniform estimation.

\begin{assumption}[Covariate-domain geometry]\label{ass:geometry-Z}
The covariate domain $\calZ\subset\R^d$ is compact and satisfies a uniform interior cone condition: there exist constants $\rho_Z>0$ and $\eta_Z>0$ such that, for every $z\in\calZ$, there is a unit vector $\xi_z\in\mathbb S^{d-1}$ for which
\begin{equation}\label{eq:interior-cone-Z}
\left\{
z+t v:\ 0<t<\rho_Z,\ v\in\mathbb S^{d-1},\
\norm{v-\xi_z}<\eta_Z
\right\}
\subset \calZ.
\end{equation}
\end{assumption}

The cone condition rules out arbitrarily thin boundary cusps. More precisely,
it implies that there exist $r_0,a_0>0$ such that
\begin{equation}\label{eq:covariate-volume-lower}
\operatorname{Leb}_d\{B(z,r)\cap\calZ\}\geq a_0r^d
\qquad
\forall z\in\calZ,\quad 0<r\leq r_0.
\end{equation}
The condition holds for compact convex domains with nonempty interior and, more
generally, for bounded Lipschitz domains.

\begin{assumption}[Design density for the uniform theorem]\label{ass:design}
The law $P_Z$ has a density $p_Z$ on $\calZ$ satisfying
\[
0<c_Z\leq p_Z(z)\leq C_Z<\infty\qquad \forall z\in\calZ.
\]
\end{assumption}

The lower bound $p_Z\geq c_Z$ is appropriate for a uniform theorem over all $z\in\calZ$. It is stronger than necessary for the integrated loss; the trimmed estimator of \Cref{sec:trimmed} weakens it to local mass conditions on high-probability subsets of $\calZ$.

The logarithmic bandwidth consequences and normalized-kernel bounds used
below are collected, with their proofs, in
\Cref{app:preliminary-proofs}.

\begin{assumption}[First-order mean regularity]\label{ass:first-moment}
The conditional weighted mean measure $\Lambda^w_z$ is absolutely continuous with respect to Lebesgue measure on $\OmegaL$, with density $\lambda_w:\calZ\times\OmegaL\to[0,\infty)$, and $\lambda_w$ is bounded and uniformly continuous.
\end{assumption}

\begin{assumption}[Envelope alternatives]\label{ass:envelope}
We use two envelope regimes.  The main unnormalized theorem assumes
\textup{(E1)}, whereas the normalized theory of \Cref{sec:normalized}
uses \textup{(E2)}.
\begin{enumerate}[label=\textup{(E\arabic*)}]
\item \textbf{Bounded weighted mass:} there is $M_w<\infty$ such that
$\mu_i^w(\OmegaL)\leq M_w$ almost surely.
\item \textbf{Normalized weighted diagrams:} replace $\mu_i^w$ by the probability measure $\widetilde\mu_i^w$ defined in \Cref{sec:normalized}, whose total mass is one automatically.
\end{enumerate}
\end{assumption}

\begin{remark}[Bounded weighted mass from total persistence]\label{rem:mass-from-pers}
Write
\[
\Pers_\tau(D):=\sum_{x\in D}a_x\pers(x)^\tau.
\]
If $w(u)\leq C_w\pers(u)^\tau$ and $\Pers_\tau(D_i)\leq M$ almost surely,
then $\mu_i^w(\OmegaL)\leq C_wM$. Thus \Cref{ass:envelope}(E1) follows from a
bounded $\tau$-total-persistence assumption, which is often more natural than
a bound on the raw number of diagram points. \Cref{sec:truncation} replaces
(E1) altogether by a moment condition.
\end{remark}

\section{Estimation theory}\label{sec:theory}

\subsection{Finite-sample uniform rate}\label{sec:supnorm-rate}

Let
\[
\omega_{\lambda_w}(r):=\sup\{|\lambda_w(z,u)-\lambda_w(z',u')|:\norm{z-z'}+\norm{u-u'}\leq r\}
\]
denote the joint modulus of continuity of the target. For $\delta\in(0,1)$, define the rate functions
\begin{align}
r_n(h_Z,h_U,\delta)&:=\sqrt{\frac{\log(C n/(\delta h_Z^d h_U^2))}{nh_Z^d h_U^2}}+\frac{\log(C n/(\delta h_Z^d h_U^2))}{nh_Z^d h_U^2},
\label{eq:rate-rn}\\
r_{Z,n}(h_Z,\delta)&:=\sqrt{\frac{\log(C n/(\delta h_Z^d))}{nh_Z^d}}+\frac{\log(C n/(\delta h_Z^d))}{nh_Z^d}.
\label{eq:rate-rzn}
\end{align}

\begin{theorem}[Finite-sample sup-norm rate]\label{thm:supnorm-rate}
Assume \crefrange{ass:weight}{ass:envelope}\textup{(E1)}. Then there is a constant $C_1<\infty$, depending only on the model constants and the kernels, such that, for every $\delta\in(0,1)$ and all sufficiently large $n$, with probability at least $1-\delta$,
\begin{equation}\label{eq:supnorm-rate}
\sup_{z\in\calZ,u\in\OmegaL}|\widehat\lambda_{w,n}(z,u)-\lambda_w(z,u)|\leq C_1(\omega_{\lambda_w}(h_Z+h_U)+r_n(h_Z,h_U,\delta)).
\end{equation}
Therefore, under \Cref{ass:kernels},
\[
\sup_{z\in\calZ,u\in\OmegaL}|\widehat\lambda_{w,n}(z,u)-\lambda_w(z,u)|\overset{P}{\to}0,
\qquad\text{and hence}\qquad
\mathcal{L}_{\mathrm{IU}}(\widehat\lambda_{w,n},\lambda_w)\overset{P}{\to}0.
\]
\end{theorem}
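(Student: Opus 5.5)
We will use the standard bias--variance decomposition for Nadaraya--Watson estimators, written as a ratio. Set
\[
\widehat g(z,u):=\frac1n\sum_{i=1}^n\kappa_{Z,h_Z}(z,Z_i)Y^w_{i,h_U}(u),\qquad
\widehat f(z):=\frac1n\sum_{i=1}^n\kappa_{Z,h_Z}(z,Z_i),
\]
so that $\widehat\lambda_{w,n}=\widehat g/\widehat f$ wherever $\widehat f>0$. Let $g=\E\widehat g$ and $f=\E\widehat f$. By \Cref{obs:kernel-bounds}, together with \Cref{ass:design} and \eqref{eq:covariate-volume-lower}, $f$ is bounded above and below uniformly in $z$ for small $h_Z$. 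The tower property gives
\[
g(z,u)=\int_\calZ\kappa_{Z,h_Z}(z,z')p_Z(z')\int_{\OmegaL}\kappa_{U,h_U}(u,v)\lambda_w(z',v)\,\dd v\,\dd z'.
\]
We then decompose
\[
\widehat\lambda-\lambda=\frac{(\widehat g-g)-\lambda(\widehat f-f)}{\widehat f}+\frac{g-\lambda f}{\widehat f},
\]
which separates a stochastic part from a bias part, both divided by $\widehat f$.

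\emph{Bias.} The kernels are nonnegative, supported in unit balls, and by \eqref{eq:kernel-normalization} integrate exactly to one over the truncated domains. Hence $g(z,u)-\lambda_w(z,u)f(z)$ is an average of $\lambda_w(z',v)-\lambda_w(z,u)$ with $\|z'-z\|\le h_Z$ and $\|v-u\|\le h_U$, weighted by $\kappa_Z p_Z\kappa_U\ge0$. It is therefore bounded by $f(z)\,\omega_{\lambda_w}(h_Z+h_U)$. Boundary correction by $c_{U,h}$ is exactly what makes this hold up to the boundary of $\OmegaL$.

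\emph{Stochastic terms.} These are handled with Bernstein's inequality combined with a discretization (chaining-free net) argument.
\begin{itemize}
\item \emph{Summand bounds.} Each summand of $\widehat g$ satisfies $|\kappa_Z Y|\lesssim h_Z^{-d}h_U^{-2}M_w$, using (E1) and boundedness of $K_U$ together with the lower bound on $c_{U,h}$.
\item \emph{Variance.} The variance is $\lesssim h_Z^{-d}h_U^{-2}$. To see this, bound $Y^2\le \|Y\|_\infty Y$, giving $\E[Y\mid Z]\le\|\lambda_w\|_\infty$. Then $\E\kappa_Z^2\lesssim h_Z^{-d}$ and $\|Y\|_\infty\lesssim h_U^{-2}M_w$.
\item \emph{Bernstein at a point.} At a fixed $(z,u)$, Bernstein's inequality gives deviation $\lesssim\sqrt{t/(nh_Z^dh_U^2)}+t/(nh_Z^dh_U^2)$ with probability $1-2e^{-t}$.
\item \emph{Uniformity.} By the Lipschitz property of the kernels and of $1/c$, both $\widehat g$ and $g$ are Lipschitz in $(z,u)$ with constant $\lesssim M_w h_Z^{-d-1}h_U^{-3}$ almost surely. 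Take a grid of mesh $\epsilon=h_Z^{d+1}h_U^{3}/n$; it has $N\lesssim(n/(h_Z^dh_U^2))^{C}$ points on the compact set $\calZ\times\OmegaL$. A union bound with $t=\log(N/\delta)$ then yields the term $r_n(h_Z,h_U,\delta)$ up to constants.
\end{itemize}
The same argument applied to $\widehat f$ alone gives $\sup_z|\widehat f-f|\lesssim r_{Z,n}(h_Z,\delta)\le r_n$. Since $r_n\to0$ under \Cref{ass:kernels}, for large $n$ this forces $\widehat f\ge f/2$ uniformly. On that event the denominator convention is irrelevant. Combining the pieces, using $\|\lambda_w\|_\infty<\infty$, yields \eqref{eq:supnorm-rate}.

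\emph{Consistency.} The consequences follow directly. Uniform continuity gives $\omega_{\lambda_w}(h_Z+h_U)\to0$, and $nh_Z^dh_U^2/\log n\to\infty$ gives $r_n\to0$, because $\log(n/(\delta h_Z^dh_U^2))\lesssim\log n$ for fixed $\delta$; this uses the logarithmic bandwidth consequences in \Cref{app:preliminary-proofs}. The integrated loss $\mathcal L_{\mathrm{IU}}$ is dominated by the sup-norm on the compact domain.

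\emph{Main obstacle.} The delicate step is the uniform concentration. The variance bound must capture the correct $h_Z^{-d}h_U^{-2}$ scaling, rather than the crude $h_U^{-4}$, from a random measure with possibly many atoms. The Lipschitz/net argument must also be made valid near the boundaries, where the normalizing factors $c_{Z,h},c_{U,h}$ vary; we will rely on their uniform lower bounds and Lipschitz continuity from \Cref{obs:kernel-bounds}.
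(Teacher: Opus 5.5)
Your proof is correct, and its skeleton matches the paper's. You use the same numerator--denominator ratio identity and the same local-average representation of the bias, which is where the boundary normalization $c_{U,h}$ enters. Your envelope and variance both scale as $h_Z^{-d}h_U^{-2}$; your bound $Y^2\le\|Y\|_\infty Y$ gives the same order as the paper's Cauchy--Schwarz bound with respect to $\mu^w$. You also use the same lower bound $\widehat f\ge c_Z/2$ on a high-probability event.

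The genuine difference is the uniform concentration step. The paper invokes a cited Talagrand-type inequality (\Cref{prop:uniform-empirical-process}, assembled from results of \citet{wu2024estimation}). It then verifies envelope, variance and uniform $L^2(Q)$ covering-number bounds for the indexed classes in \Cref{lem:denominator-concentration,lem:numerator-concentration}. You instead use pointwise Bernstein, a union bound over a grid of mesh $h_Z^{d+1}h_U^3/n$, and the almost-sure Lipschitz extension. The index set $\calZ\times\OmegaL$ is compact and of fixed dimension, and the Lipschitz constants are polynomial in $1/h_Z,1/h_U$. So the grid has cardinality polynomial in $n/(h_Z^dh_U^2)$, the union bound costs only the logarithm already in $r_n$, and the $O(1/n)$ discretization error is negligible. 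Nothing is lost, and your route is more elementary and self-contained. The paper sketches this same construction at the start of its appendix but reserves it for scalar arguments.

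What the paper's packaging buys is modularity. \Cref{lem:numerator-concentration} takes the conditional second moment as a hypothesis separate from the mass envelope $M_n$. It is therefore reused verbatim for the growing envelope in \Cref{prop:truncation} and the normalized response in \Cref{thm:normalized-rate}. Your variance bound through $\|Y\|_\infty$ would carry a factor $M_n$ in the truncated setting. With a directly assumed second moment, however, your Bernstein--net argument would work equally well.

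Finally, you center the ratio identity at $\lambda_w$ rather than at $\lambda_h=N_h/D_h$. This is equivalent; it only places the bias term over $\widehat f$, costing a factor $2$.
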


The proof, given in \Cref{app:supnorm}, follows a
numerator--denominator decomposition. A standard empirical-process
inequality for classes with polynomial covering numbers controls the
empirical numerator uniformly over $\calZ\times\OmegaL$; the required
envelope, variance, and entropy bounds are verified explicitly for the
random-measure response. The conditional second-moment bound follows
from bounded weighted mass, boundedness of the conditional mean density,
and the Cauchy--Schwarz inequality. The
persistence-plane normalization together with the population
Nadaraya--Watson denominator makes the deterministic ratio a genuine
local average of $\lambda_w$, even at the boundary, and an exact
centered ratio identity propagates the numerator and denominator
errors to the ratio.

\begin{corollary}[Anisotropic H\"older rate]\label{cor:holder-rate}
Suppose, in addition to the assumptions of \Cref{thm:supnorm-rate}, that there exist constants $s_Z,s_U\in(0,1]$ and $L_Z,L_U<\infty$ such that
\[
|\lambda_w(z,u)-\lambda_w(z',u')|\leq L_Z\norm{z-z'}^{s_Z}+L_U\norm{u-u'}^{s_U}
\]
for all $z,z'\in\calZ$ and $u,u'\in\OmegaL$. Then, for any bandwidths satisfying \Cref{ass:kernels},
\[
\sup_{z\in\calZ,u\in\OmegaL}|\widehat\lambda_{w,n}(z,u)-\lambda_w(z,u)|=O_P\left(h_Z^{s_Z}+h_U^{s_U}+\sqrt{\frac{\log n}{nh_Z^d h_U^2}}+\frac{\log n}{nh_Z^d h_U^2}\right).
\]
In particular, choosing
\begin{equation}\label{eq:holder-bandwidths}
h_Z\asymp\left(\frac{\log n}{n}\right)^{1/(2s_Z+d+2s_Z/s_U)},
\qquad
h_U\asymp\left(\frac{\log n}{n}\right)^{1/(2s_U+2+ds_U/s_Z)}
\end{equation}
gives
\[
\sup_{z\in\calZ,u\in\OmegaL}|\widehat\lambda_{w,n}(z,u)-\lambda_w(z,u)|=O_P\left[\left(\frac{\log n}{n}\right)^{1/(2+d/s_Z+2/s_U)}\right].
\]
In the isotropic case $s_Z=s_U=s$, this becomes
\[
\sup_{z\in\calZ,u\in\OmegaL}|\widehat\lambda_{w,n}(z,u)-\lambda_w(z,u)|=O_P\left[\left(\frac{\log n}{n}\right)^{s/(2s+d+2)}\right].
\]
\end{corollary}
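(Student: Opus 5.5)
The corollary follows from \Cref{thm:supnorm-rate} once we bound the modulus of continuity and simplify the logarithm, and then balance bias against variance. Under the stated H\"older condition, for $\norm{z-z'}+\norm{u-u'}\le h_Z+h_U$ we have only the joint constraint, so I would not plug $\omega_{\lambda_w}(h_Z+h_U)$ in directly. That would give $L_Z(h_Z+h_U)^{s_Z}+L_U(h_Z+h_U)^{s_U}$, which is not the anisotropic bias. Instead I would revisit the bias step behind \eqref{eq:supnorm-rate}.

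In that proof, the deterministic ratio is a local average of $\lambda_w$ over pairs $(z',u')$ with $\norm{z'-z}\le h_Z$ and $\norm{u'-u}\le h_U$. This uses the compact kernel supports in \Cref{ass:kernels} and the normalizations \eqref{eq:kernel-normalization}. The bias term is therefore really bounded by
\[
\sup\{|\lambda_w(z,u)-\lambda_w(z',u')|:\norm{z-z'}\le h_Z,\ \norm{u-u'}\le h_U\}\le L_Z h_Z^{s_Z}+L_U h_U^{s_U}.
\]
So the first step is to record that the proof of \Cref{thm:supnorm-rate} yields this separated-modulus version of \eqref{eq:supnorm-rate}. The modulus enters only through the averaging identity, so the same argument applies verbatim.

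Next comes the stochastic term. Fix $\delta$. Under \Cref{ass:kernels} we have $nh_Z^dh_U^2/\log n\to\infty$, and the bandwidths are at most polynomial in $n$ in the regimes of interest. In particular, for the choices \eqref{eq:holder-bandwidths}, $h_Z^dh_U^2\ge n^{-1}$ eventually, so $\log(Cn/(\delta h_Z^dh_U^2))\le C'\log n$.

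For general bandwidth sequences I would note a cheaper route. If $h_Z^dh_U^2$ decays faster than polynomially, then $\log(1/(h_Z^dh_U^2))\le\log n$ still holds eventually, because $nh_Z^dh_U^2\to\infty$ forces $h_Z^dh_U^2\ge 1/n$. With this, $r_n(h_Z,h_U,\delta)\le C_\delta\bigl(\sqrt{\log n/(nh_Z^dh_U^2)}+\log n/(nh_Z^dh_U^2)\bigr)$. Since $\delta$ is arbitrary, the event bound converts directly into the stated $O_P$ statement.

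Finally, I would optimize. Write $\epsilon=\log n/n$ and set $h_Z=\epsilon^{a}$, $h_U=\epsilon^{b}$, with the target rate $\epsilon^{\gamma}$. Balancing $h_Z^{s_Z}=h_U^{s_U}=\epsilon^\gamma$ gives $a=\gamma/s_Z$ and $b=\gamma/s_U$. The variance term is $\epsilon^{(1-da-2b)/2}$, so balancing it with the bias requires $1-d\gamma/s_Z-2\gamma/s_U=2\gamma$. This gives $\gamma=1/(2+d/s_Z+2/s_U)$, and hence $a=1/(2s_Z+d+2s_Z/s_U)$ and $b=1/(2s_U+2+ds_U/s_Z)$, matching \eqref{eq:holder-bandwidths}.

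It remains to check that these choices satisfy \Cref{ass:kernels}. We have $h_Z,h_U\to0$, and $nh_Z^dh_U^2/\log n=\epsilon^{-2\gamma}\to\infty$. The second-order term $\log n/(nh_Z^dh_U^2)=\epsilon^{2\gamma}$ is negligible. Setting $s_Z=s_U=s$ gives the isotropic exponent $s/(2s+d+2)$.

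The main obstacle is the bias step. One must justify the separated modulus rather than the joint $\omega_{\lambda_w}(h_Z+h_U)$, which requires confirming that the boundary-corrected ratio in the theorem's proof is a genuine product-support local average. The rest is bookkeeping.
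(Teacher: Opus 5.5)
Your proposal is correct and follows the paper's proof essentially step for step. The paper likewise sharpens the bias term to $L_Zh_Z^{s_Z}+L_Uh_U^{s_U}$, using the fact that the local-average representation in \Cref{lem:double-kernel-bias} is supported on $\{\norm{z'-z}\le h_Z,\ \norm{v-u}\le h_U\}$. It then uses $h_Z^dh_U^2\ge(\log n)/n$ eventually, via \eqref{eq:bandwidth-log-consequence}, to reduce the logarithm in $r_n$ to $O(\log n)$, and performs the same balancing and the same check of \Cref{ass:kernels}. Your aside about $h_Z^dh_U^2$ decaying faster than polynomially describes an impossible case, since $nh_Z^dh_U^2\to\infty$ already excludes it, but the conclusion you draw from it is the right one.
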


The exponent $s/(2s+d+2)$ is the classical sup-norm exponent for nonparametric regression in $d+2$ effective dimensions \citep{tsybakov2009introduction,gine2016mathematical}: the covariate contributes $d$ dimensions and the persistence plane contributes two. For $d=0$ the rate matches the unconditional persistence-intensity rates of \citet{wu2024estimation}, and \Cref{sec:pot-rate} shows that the two persistence dimensions disappear altogether when the error is measured in partial optimal transport. For smoothness $s>1$, higher-order bias reduction would require a local-polynomial or higher-order-kernel modification, together with corresponding boundary correction. We do not pursue that extension because the nonnegative local-constant estimator \eqref{eq:nw-estimator} is the one that directly admits the measure-valued interpretation used in \Cref{sec:pot-rate}.

The next result shows that the exponent of \Cref{cor:holder-rate} is not an
artefact of the proof technique on the bounded-weighted-mass model underlying
the main uniform theorem. For $s\in(0,1]$ and $L_H>0$, let
$\norm{g}_\infty:=\sup_{z\in\calZ,u\in\OmegaL}|g(z,u)|$ for every bounded
function $g:\calZ\times\OmegaL\to\mathbb R$, and let
$\Sigma(s,L_H)$ denote the class of functions
$g:\calZ\times\OmegaL\to\mathbb R$ satisfying
\[
|g(z,u)-g(z',u')|\leq L_H\left(\norm{z-z'}+\norm{u-u'}\right)^s
\]
for all $z,z'\in\calZ$ and $u,u'\in\OmegaL$.

\begin{theorem}[Minimax lower bound under bounded weighted mass]\label{thm:lower-bound}
Fix $d\geq1$, $s\in(0,1]$, $L_H>0$, and $M_w>0$. Suppose that $w$ satisfies \Cref{ass:weight} and that $w(u)>0$ for every $u\in\OmegaLo$. Let $\mathcal P_{\mathrm{bm}}(s,L_H,M_w)$ be the class of joint laws $P$ of $(Z,D)$ satisfying the following conditions:
\begin{enumerate}
\item $Z$ is uniformly distributed on $\calZ=[0,1]^d$;
\item $D$ is a persistence diagram supported in $\OmegaLo$ and
\[
\mu_D^w(\OmegaL)\leq M_w
\]
almost surely;
\item the conditional weighted mean measure admits a density $\lambda_{w,P}$ belonging to $\Sigma(s,L_H)$.
\end{enumerate}
Then there exist constants $c,c_0>0$, depending only on $d$, $s$, $L_H$, $M_w$, $w$, and $L$, such that, for all sufficiently large $n$,
\[
\inf_{\widehat\lambda}\sup_{P\in\mathcal P_{\mathrm{bm}}(s,L_H,M_w)}P^{\otimes n}\left(\norm{\widehat\lambda-\lambda_{w,P}}_\infty\geq c\left(\frac{\log n}{n}\right)^{s/(2s+d+2)}\right)\geq c_0,
\]
where the infimum is over all measurable estimators based on $(Z_1,D_1),\ldots,(Z_n,D_n)$.
\end{theorem}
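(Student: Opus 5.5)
We prove the lower bound by the standard multiple-hypotheses reduction for sup-norm loss \citep[Theorem~2.5]{tsybakov2009introduction}. This requires constructing $N+1$ laws $P_0,\dots,P_N\in\mathcal P_{\mathrm{bm}}(s,L_H,M_w)$ with two properties. First, the densities must be pairwise $2c\psi_n$-separated in $\norm{\cdot}_\infty$, where $\psi_n=(\log n/n)^{s/(2s+d+2)}$. Second, the average Kullback--Leibler divergence must satisfy $n^{-1}\sum_j\mathrm{KL}(P_j^{\otimes n},P_0^{\otimes n})\leq \alpha\log N$ with $\alpha<1/8$. All laws share $Z\sim\mathrm{Unif}[0,1]^d$, so only the conditional law of $D$ given $Z$ varies. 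Since $\mu_D^w$ is not a counting measure, the responses will be a Bernoulli single-point diagram model.

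\textbf{Construction of the hypotheses.} Fix a compact square $Q\subset\OmegaLo$ bounded away from the diagonal. On $Q$ we have $0<w_{\min}\leq w\leq w_{\max}$ by continuity and positivity of $w$. Under $P_0$, given $Z=z$, let $D$ be empty with probability $1-\pi_0$. Otherwise $D$ is a single point $X$ with density $g_0$ uniform on $Q$. Then $\lambda_{w,P_0}(z,u)=\pi_0 w(u)g_0(u)$. To keep this density Lipschitz (hence in $\Sigma(s,L_H)$) up to $\partial Q$, we instead take $g_0\propto \phi/w$ for a fixed smooth bump $\phi$ supported in $Q$. This makes $\lambda_{w,P_0}=\pi_0 c\phi$, and we choose $\pi_0$ small. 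The mass constraint holds if $w_{\max}\leq M_w$. Otherwise we scale down by letting the point appear only with weight $\le M_w$, which is automatic since a single point has mass $w(X)\le\norm{w}_\infty$. If $\norm{w}_\infty>M_w$, we restrict $Q$ to a region where $w\leq M_w$. Such a region exists near the diagonal, because $w$ is continuous and vanishes on $\DeltaL$.

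Next choose $h=(\log n/n)^{1/(2s+d+2)}$ and a grid of $N\asymp h^{-(d+2)}$ disjoint cubes in $[0,1]^d\times Q'$, with $Q'$ the interior region where $\phi$ is bounded below. Let $\psi_j(z,u)=L_0h^s\Psi((z-z_j)/h,(u-u_j)/h)$ for a smooth bump $\Psi$ with integral zero in $u$. The zero integral ensures that $\pi_0$ does not change. Hypothesis $j$ uses conditional density $g_j(z,\cdot)=g_0+\psi_j(z,\cdot)/(\pi_0 w)$, which stays nonnegative for small $L_0$. Then $\lambda_{w,P_j}=\lambda_{w,P_0}+\psi_j\in\Sigma(s,L_H)$ for small $L_0$, because the $C^s$ seminorm of $h^s\Psi(\cdot/h)$ is bounded. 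The separation is $\norm{\lambda_j-\lambda_k}_\infty\asymp L_0h^s\asymp\psi_n$.

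\textbf{KL bound, the main step.} Conditionally on $Z=z$, the laws differ only through the point density on the event $\{D\neq\emptyset\}$, so $\mathrm{KL}(P_j,P_0)=\int\pi_0\,\mathrm{KL}(g_j(z,\cdot),g_0)\,dz$. Since $g_0$ is bounded below on the support of the perturbation, $\chi^2$ comparison gives $\mathrm{KL}\lesssim\int\!\!\int \psi_j^2\,du\,dz\asymp L_0^2h^{2s}h^{d+2}$. Multiplying by $n$ gives $nL_0^2h^{2s+d+2}=L_0^2\log n$. Meanwhile $\log N\asymp (d+2)(2s+d+2)^{-1}\log(n/\log n)\gtrsim\log n$. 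Choosing $L_0$ small therefore yields the ratio $\alpha<1/8$, and Tsybakov's theorem delivers the probability bound $c_0>0$.

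The delicate points are bookkeeping rather than deep. We must check that the single-point model genuinely belongs to the class: $D$ is supported in $\OmegaLo$, its mass is at most $M_w$, and $\lambda_{w,P_j}$ is globally H\"older on all of $\calZ\times\OmegaL$, which holds because the bumps are compactly supported inside $Q$. We must also check that $g_0$ and $g_j$ are bounded below on the perturbation support, so that KL is controlled by $L^2$. I expect the only real obstacle to be this uniform lower bound together with the interaction of the weight $w$ with the mass constraint. Both are resolved by working on a compact region away from the diagonal where $w$ is bounded above and below.
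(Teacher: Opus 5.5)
Your architecture matches the paper's proof: Tsybakov's Theorem~2.5, Bernoulli single-point diagrams located in a compact region where $0<\inf w\le\sup w\le M_w$, fibrewise zero-mean bumps at scale $h\asymp(\log n/n)^{1/(2s+d+2)}$, and a $\chi^2$ bound on the Kullback--Leibler divergence. Your baseline $g_0\propto\phi/w$, which makes $\lambda_{w,P_0}$ smooth, is a harmless variant of the paper's $f_0$.

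One step of your construction fails as written. You perturb the intensity directly, $\lambda_{w,P_j}=\lambda_{w,P_0}+\psi_j$, by setting $g_j=g_0+\psi_j/(\pi_0w)$. You then assert that $\int\psi_j(z,u)\,du=0$ keeps the Bernoulli probability at $\pi_0$. But
\[
\int g_j(z,u)\,du=1+\frac{1}{\pi_0}\int\frac{\psi_j(z,u)}{w(u)}\,du ,
\]
and the last integral does not vanish in general; it would vanish if $w$ were constant on the support of the bump. Consequently $g_j(z,\cdot)$ is not a probability density and $P_j$ is not a well-defined law. Your $\chi^2$ comparison, which needs $\int(g_j-g_0)\,du=0$, is therefore unjustified.

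The paper avoids this by placing the zero-integral bump on the location density rather than on the intensity. It takes $f_j=f_0+L_\star h^s\Phi((z-z_j)/h,(u-u_j)/h)$ with $\int\Phi(t,v)\,dv=0$, so that $\lambda_{w,j}-\lambda_{w,0}=\vartheta L_\star h^s w(u)\Phi(\cdot)$. The price is that the H\"older check now involves the product with $w$. For $h\le1$, the $s$-H\"older seminorm of $h^s w(u)\Phi((x-x_j)/h)$ is at most $\norm{w}_\infty[\Phi]_{C^s}+\Lip(w)\norm{\Phi}_\infty\operatorname{diam}(\OmegaL)^{1-s}$. This is exactly where the Lipschitz part of the weight assumption enters, and the separation becomes $\vartheta L_\star\inf_B w\,h^s$.

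Alternatively, you could keep your intensity-level perturbation and let the Bernoulli probability depend on $z$, namely $\pi_j(z)=\int\lambda_{w,P_j}(z,u)/w(u)\,du$, with location density $\lambda_{w,P_j}/(w\,\pi_j(z))$. The divergence then acquires an extra Bernoulli term of order $L_0^2h^{2s+4}$ on a set of $z$ of measure $O(h^d)$, which is negligible.

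With either repair, the rest of your argument goes through: nonnegativity holds for small $L_0$, and the per-observation divergence $\lesssim L_0^2h^{2s+d+2}$ gives an $n$-fold divergence $\lesssim L_0^2\log n$ against $\log N\gtrsim\log n$. Also, the average in Tsybakov's condition is over the $N$ hypotheses, $N^{-1}\sum_j$, not $n^{-1}\sum_j$.
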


The lower bound is matched uniformly over this class by the estimator already
studied above. Indeed, fix kernels satisfying \Cref{ass:kernels}, choose
\[
h_Z\asymp h_U\asymp
\left(\frac{\log n}{n}\right)^{1/(2s+d+2)},
\qquad
\varrho_n:=\left(\frac{\log n}{n}\right)^{s/(2s+d+2)},
\]
and fix $\delta\in(0,1)$. Then there is $C_\delta<\infty$, depending
only on $d,s,L_H,M_w,w,L$, the kernels, and $\delta$, such that, for all
sufficiently large $n$,
\begin{equation}\label{eq:bounded-mass-uniform-upper}
\sup_{P\in\mathcal P_{\mathrm{bm}}(s,L_H,M_w)}
P^{\otimes n}\left(
\norm{\widehat\lambda_{w,n}-\lambda_{w,P}}_\infty
>C_\delta\varrho_n
\right)
\leq\delta.
\end{equation}
The uniformity of the upper-bound constants is verified in
\Cref{app:uniform-upper}. Consequently,
\Cref{thm:lower-bound} and \eqref{eq:bounded-mass-uniform-upper} establish
the minimax rate $\varrho_n$ in supremum norm over
$\mathcal P_{\mathrm{bm}}(s,L_H,M_w)$. The proof of the lower bound is in
\Cref{app:lower-bound}.

\subsection{Partial optimal transport rate for the same estimator}\label{sec:pot-rate}

The estimator \eqref{eq:nw-estimator} also defines the finite measure
\[
\widehat\Lambda_{z,n}^{w,h_U}(A)
:=\int_A\widehat\lambda_{w,n}(z,u)\dd u,
\qquad A\in\calB(\OmegaL),
\]
and therefore remains meaningful when $\Lambda_z^w$ has no Lebesgue
density. For $q\in[1,\infty)$, let $\adm(\nu,\xi)$ be the finite transport
plans whose off-diagonal marginals are $\nu$ and $\xi$, with unmatched mass
allowed to move to or from the diagonal $\DeltaL$, and define
\[
\OT_q(\nu,\xi)
:=\left\{\inf_{\pi\in\adm(\nu,\xi)}
\int_{\OmegaL\times\OmegaL}\|u-v\|^q\dd\pi(u,v)\right\}^{1/q}.
\]
We use the Euclidean ground norm; its relation to the customary max norm is
given in \Cref{rem:ground-norm}. Write
\[
\omega_{\OT,q}(r)
:=\sup_{\|z-z'\|\le r}\OT_q^q(\Lambda_z^w,\Lambda_{z'}^w),
\qquad
r_{U,h}(v):=\int_{\OmegaL}\kappa_{U,h}(u,v)\dd u,
\]
with $d_{\DeltaL,2}(v):=\dist_2(v,\DeltaL)=\pers(v)/\sqrt2$, set
\[
\mathfrak b_{U,q}(h)
:=\sup_{z\in\calZ}\int_{\OmegaL}|r_{U,h}(v)-1|
d_{\DeltaL,2}(v)^q\dd\Lambda_z^w(v).
\]
The first quantity controls covariate bias in transport, whereas the second
records the possible mass defect caused by boundary normalization of the
persistence kernel. Checkable sufficient conditions for both are collected in
\Cref{app:pot}; recall the covariate-only rate $r_{Z,n}$ from
\eqref{eq:rate-rzn}.

\begin{theorem}[Partial optimal transport rate for the double-kernel estimator]\label{thm:pot-rate}
Assume \Cref{ass:weight,ass:geometry-Z,ass:design,ass:envelope}\textup{(E1)}.
Assume the kernel conditions in \Cref{ass:kernels}, but replace the
density-estimation bandwidth condition by
\[
h_Z\downarrow0,\qquad h_U\downarrow0,
\qquad \frac{nh_Z^d}{\log n}\to\infty.
\]
Suppose that $\omega_{\OT,q}(r)\to0$ as $r\downarrow0$ and
$\mathfrak b_{U,q}(h_U)\to0$, and define
\[
\chi_q(N):=
\begin{cases}
1,&q>1,\\
\log(2+N),&q=1.
\end{cases}
\]
Then there is $C<\infty$, depending only on the model constants, the kernels,
$q$, and $L$, such that, for every fixed $\delta\in(0,1)$ and all sufficiently
large $n$, with probability at least $1-\delta$,
\begin{equation}\label{eq:pot-rate}
\sup_{z\in\calZ}\OT_q^q
\left(\widehat\Lambda_{z,n}^{w,h_U},\Lambda_z^w\right)
\le C\left\{
h_U^q+\mathfrak b_{U,q}(h_U)+\omega_{\OT,q}(h_Z)
+\frac{\chi_q(nh_Z^d)}{\sqrt{nh_Z^d}}
+r_{Z,n}(h_Z,\delta)
\right\}.
\end{equation}
In particular,
$\sup_{z\in\calZ}\OT_q(\widehat\Lambda_{z,n}^{w,h_U},\Lambda_z^w)
\overset P\longrightarrow0$.
\end{theorem}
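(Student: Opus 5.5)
The plan is to split the error into a persistence-smoothing term, a stochastic term and a covariate-bias term, each bounded in $\OT_q^q$. Write the estimator as a mixture,
\[
\widehat\Lambda_{z,n}^{w,h_U}=\sum_{i=1}^n W_i(z)\,S_{h_U}\mu_i^w,
\qquad
W_i(z):=\frac{\kappa_{Z,h_Z}(z,Z_i)}{\sum_j\kappa_{Z,h_Z}(z,Z_j)},
\]
where $S_h\nu(A):=\int_A\int_{\OmegaL}\kappa_{U,h}(u,v)\dd\nu(v)\dd u$ is the smoothing operator. The two structural tools are:
\begin{itemize}
\item the quasi-triangle inequality $\OT_q^q(\nu_1,\nu_3)\le 2^{q-1}\{\OT_q^q(\nu_1,\nu_2)+\OT_q^q(\nu_2,\nu_3)\}$, which follows because $\OT_q$ is a metric allowing transfer to $\DeltaL$;
\item joint convexity of $\OT_q^q$ under mixtures, since admissible plans for the components can be added with weights $W_i$.
\end{itemize}
With these I insert the intermediate measures $\bar\mu_z:=\sum_iW_i(z)\mu_i^w$ and $\bar\Lambda_z:=\sum_iW_i(z)\Lambda_{Z_i}^w$.

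\emph{Smoothing term.} By convexity, $\OT_q^q(\widehat\Lambda_{z,n}^{w,h_U},\bar\mu_z)\le\sum_iW_i(z)\OT_q^q(S_{h_U}\mu_i^w,\mu_i^w)$. For a single atom at $v$, the smoothed measure has mass $r_{U,h}(v)$ inside $\overline B(v,h)$. I couple $\min(1,r_{U,h}(v))$ of it to $v$ at cost at most $h^q$, and send the excess or defect $|r_{U,h}(v)-1|$ to or from the diagonal. When $r>1$ the excess sits within $h$ of $v$, so its diagonal cost is at most $2^{q-1}(d_{\DeltaL,2}(v)^q+h^q)$. This gives
\[
\OT_q^q(S_h\mu_i^w,\mu_i^w)\lesssim\int\bigl(h^q+|r_{U,h}-1|\,d_{\DeltaL,2}^q\bigr)\dd\mu_i^w.
\]
The weighted sum is then $\lesssim h_U^qM_w+\sum_iW_i\int|r-1|d^q\dd\mu_i^w$. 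Its conditional mean given the covariates is at most $\sup_{z}\int|r-1|d^q\dd\Lambda_{z}^w=\mathfrak b_{U,q}(h_U)$. The fluctuation is a bounded weighted average of independent terms, each bounded by a constant times $M_w$, and is absorbed into the stochastic term.

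\emph{Covariate bias.} Convexity and the compact support of $K_Z$ give $\OT_q^q(\bar\Lambda_z,\Lambda_z^w)\le\sum_iW_i\OT_q^q(\Lambda_{Z_i}^w,\Lambda_z^w)\le\omega_{\OT,q}(h_Z)$.

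\emph{Stochastic term.} This is $\OT_q^q(\bar\mu_z,\bar\Lambda_z)$. Conditional on $Z_{1:n}$, the $\mu_i^w$ are independent with means $\Lambda_{Z_i}^w$. I bound $\OT_q^q$ by a multiscale partition bound in the spirit of \citet{divol2021estimation}, using dyadic cells of side $2^{-k}$ in $\OmegaL$ together with the possibility of routing mass to the diagonal:
\[
\OT_q^q(\nu,\xi)\lesssim\sum_{k\le K}2^{-kq}\sum_{A\in\mathcal P_k}|\nu(A)-\xi(A)|+2^{-Kq}(\nu+\xi)(\OmegaL).
\]
First I place the denominator event. From the covariate-only concentration used in the proof of \Cref{thm:supnorm-rate}, with probability at least $1-\delta/2$ the denominator is bounded below uniformly in $z$. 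On that event $\max_iW_i\lesssim 1/(nh_Z^d)$ and $\sum_iW_i^2\lesssim1/(nh_Z^d)$.

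Next I bound cell deviations. Cauchy--Schwarz over the $\asymp4^k$ cells, with $\sum_A\Var(\mu_i^w(A))\le M_w^2$, gives an expected deviation at level $k$ of order $2^k/\sqrt{nh_Z^d}$. Summing $2^{k(1-q)}$ converges when $q>1$. When $q=1$ I truncate at $2^{K}\asymp nh_Z^d$, which produces the $\log(2+nh_Z^d)$ factor; this accounts for $\chi_q(nh_Z^d)/\sqrt{nh_Z^d}$.

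The main obstacle is making the stochastic term uniform in $z\in\calZ$ with only the covariate-dimension cost $r_{Z,n}$. My first approach is to fix a grid in $\calZ$ of mesh $h_Z/n^{c}$. At each grid point I apply a bounded-differences (McDiarmid) inequality to the functional $\OT_q^q(\bar\mu_z,\bar\Lambda_z)$, which changes by at most $\lesssim W_i M_w L^q$ when one pair is replaced. I then take a union bound over the polynomially many grid points. Finally I pass off-grid using the Lipschitz property of $K_Z$: on the denominator event, $z\mapsto W_i(z)$ is Lipschitz with constant of order $h_Z^{-1}$, and mixtures with masses bounded by $M_w$ move by at most $L^q\sum_i|W_i(z)-W_i(z')|$ in $\OT_q^q$.

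A Bernstein refinement, using $\sum W_i^2\lesssim1/(nh_Z^d)$, yields exactly the $\sqrt{\log(Cn/(\delta h_Z^d))/(nh_Z^d)}+\log(\cdot)/(nh_Z^d)$ form of $r_{Z,n}$. The same union-bound argument controls the fluctuation part of the smoothing term. Combining the three pieces with the quasi-triangle constants gives \eqref{eq:pot-rate}. Consistency then follows from $h_U,\mathfrak b_{U,q}(h_U),\omega_{\OT,q}(h_Z)\to0$ and $nh_Z^d/\log n\to\infty$.
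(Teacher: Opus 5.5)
Your proposal is correct and follows essentially the same route as the paper. It uses the same three-term split through $\sum_i a_i(z)\mu_i^w$ and $\sum_i a_i(z)\Lambda_{Z_i}^w$, the same per-atom smoothing coupling that creates or deletes mass at the diagonal, and mixture convexity for the covariate bias; for the fluctuation it uses the same multiscale Cauchy--Schwarz expectation bound, McDiarmid, a union bound over a fine $z$-net, and Lipschitz extension in the weights. The differences are cosmetic. You apply McDiarmid to $\OT_q^q$ itself rather than to the paper's multiscale majorant $G$; this is valid because adding or removing mass $\alpha$ changes $\OT_q^q$ by at most $d_{\max}^q\alpha(\OmegaL)$. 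You handle the boundary-defect fluctuation by a conditional Hoeffding-plus-net argument instead of the paper's scalar Nadaraya--Watson lemma. Your ``Bernstein refinement'' is unnecessary, since the McDiarmid square-root term is already dominated by $r_{Z,n}$.
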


The terms in \eqref{eq:pot-rate} respectively describe persistence smoothing,
boundary mass defect, covariate bias, empirical-measure fluctuation, and
design fluctuation. For $q>1$, the fourth term is dominated by $r_{Z,n}$;
for $q=1$, it yields at most one additional factor $(\log n)^{1/2}$.
Unlike the sup-norm rate, the stochastic part depends on the local number
$nh_Z^d$ of diagrams and not on the persistence-plane dimension.

\begin{corollary}[H\"older rate in partial optimal transport]\label{cor:pot-holder-rate}
Let $q>1$ and suppose the assumptions of \Cref{thm:pot-rate} hold. If, for
some $\beta>0$ and $L_{\OT}<\infty$,
\[
\OT_q^q(\Lambda_z^w,\Lambda_{z'}^w)
\le L_{\OT}\|z-z'\|^\beta
\qquad(z,z'\in\calZ),
\]
and $\mathfrak b_{U,q}(h_U)=O(h_U^q)$, then the choices
\[
h_Z\asymp\left(\frac{\log n}{n}\right)^{1/(2\beta+d)},
\qquad
h_U^q\lesssim
\left(\frac{\log n}{n}\right)^{\beta/(2\beta+d)}
\]
give
\[
\sup_{z\in\calZ}
\OT_q^q\left(\widehat\Lambda_{z,n}^{w,h_U},\Lambda_z^w\right)
=O_P\!\left[
\left(\frac{\log n}{n}\right)^{\beta/(2\beta+d)}
\right],
\]
or equivalently
\[
\sup_{z\in\calZ}
\OT_q\left(\widehat\Lambda_{z,n}^{w,h_U},\Lambda_z^w\right)
=O_P\!\left[
\left(\frac{\log n}{n}\right)^{\beta/(q(2\beta+d))}
\right].
\]
\end{corollary}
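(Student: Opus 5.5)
The plan is to plug the Hölder hypotheses directly into the bound \eqref{eq:pot-rate} of \Cref{thm:pot-rate} and balance the resulting terms. First I check that the stated bandwidths are admissible, i.e.\ that they satisfy the modified bandwidth conditions of \Cref{thm:pot-rate}. With $h_Z\asymp(\log n/n)^{1/(2\beta+d)}$ we have $h_Z\downarrow0$. We also have
\[
nh_Z^d\asymp n^{2\beta/(2\beta+d)}(\log n)^{d/(2\beta+d)},
\]
so
\[
\frac{nh_Z^d}{\log n}\asymp\left(\frac{n}{\log n}\right)^{2\beta/(2\beta+d)}\to\infty.
\]
The constraint $h_U^q\lesssim(\log n/n)^{\beta/(2\beta+d)}$ forces $h_U\to0$. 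If a monotone sequence is needed, I take $h_U$ equal to a constant multiple of the upper bound. The remaining hypotheses of \Cref{thm:pot-rate} also hold. First, $\omega_{\OT,q}(r)\le L_{\OT}r^\beta\to0$. Second, $\mathfrak b_{U,q}(h_U)=O(h_U^q)\to0$.

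Next I bound each term on the right of \eqref{eq:pot-rate} by $\varepsilon_n:=(\log n/n)^{\beta/(2\beta+d)}$.
\begin{itemize}
\item The smoothing term and the boundary mass-defect term satisfy $h_U^q+\mathfrak b_{U,q}(h_U)=O(h_U^q)=O(\varepsilon_n)$.
\item The covariate bias satisfies $\omega_{\OT,q}(h_Z)\le L_{\OT}h_Z^\beta\asymp\varepsilon_n$.
\item Since $q>1$, we have $\chi_q\equiv1$, and the fluctuation term is
\[
(nh_Z^d)^{-1/2}\le\sqrt{\frac{\log n}{nh_Z^d}}\asymp\sqrt{h_Z^{2\beta}}=h_Z^\beta\asymp\varepsilon_n,
\]
because $h_Z^{2\beta+d}\asymp\log n/n$ gives $\log n/(nh_Z^d)\asymp h_Z^{2\beta}$.
\item For the design term $r_{Z,n}(h_Z,\delta)$ with fixed $\delta$: since $h_Z$ decays polynomially, $\log(Cn/(\delta h_Z^d))\lesssim\log n+\log(1/\delta)$. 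Hence the square-root part is $O(\sqrt{\log n/(nh_Z^d)})=O(\varepsilon_n)$. The linear part $\log n/(nh_Z^d)\asymp h_Z^{2\beta}$ is of smaller order.
\end{itemize}

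To get $O_P$, fix $\delta\in(0,1)$. \Cref{thm:pot-rate} gives, for all large $n$ and with probability at least $1-\delta$, the bound $\sup_z\OT_q^q\le C_\delta'\varepsilon_n$, where $C_\delta'$ depends on $\delta$ only through the logarithmic factor in $r_{Z,n}$. This is exactly the definition of $O_P(\varepsilon_n)$. The equivalent statement for $\OT_q$ follows by taking $q$th roots, since $x\mapsto x^{1/q}$ is monotone and $\sup_z\OT_q=(\sup_z\OT_q^q)^{1/q}$.

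There is no real obstacle; the only point needing care is the bookkeeping in $r_{Z,n}$. There I must make sure that the $\log(1/(\delta h_Z^d))$ inside the logarithm contributes only $O(\log n)$ for fixed $\delta$. I must also make sure that the $C$ in \eqref{eq:rate-rzn} is harmless, so that the design term is absorbed into $\varepsilon_n$ rather than adding an extra logarithmic factor.
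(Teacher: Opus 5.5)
Your proposal is correct and follows the paper's proof essentially step for step. You substitute $\omega_{\OT,q}(h_Z)\le L_{\OT}h_Z^\beta$ and $\mathfrak b_{U,q}(h_U)=O(h_U^q)$ into \eqref{eq:pot-rate}, use $\chi_q\equiv1$ and $(nh_Z^d)^{-1/2}\le\sqrt{\log n/(nh_Z^d)}$, absorb the logarithm in $r_{Z,n}$ as $O(\log n)$, balance at $h_Z\asymp(\log n/n)^{1/(2\beta+d)}$, check $nh_Z^d/\log n\to\infty$, and take $q$th roots; your explicit check of the other hypotheses of \Cref{thm:pot-rate} ($h_U\downarrow0$, $\omega_{\OT,q}(r)\to0$, $\mathfrak b_{U,q}(h_U)\to0$) is a little more thorough than the paper's.
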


Thus the transport rate depends on the covariate dimension $d$, but not on
the two-dimensional persistence coordinate: persistence smoothing contributes
approximation error, but no factor $(nh_Z^dh_U^2)^{-1/2}$. This is the gain
from estimating the conditional mean measure rather than its density.

\subsection{Normalized weighted persistence density}\label{sec:normalized}

To describe the relative location rather than the expected amount of
persistence mass, consider the normalized target of
\citet{wu2024estimation}. Assume $w>0$ on $\OmegaLo$ and
$\mu_D^w(\overline{\Omega}_L)>0$ almost surely, and write
\[
\widetilde\mu_D^w
:=
\frac{\mu_D^w}{\mu_D^w(\overline{\Omega}_L)},
\qquad
\widetilde\Lambda_z^w(A)
:=
\E[\widetilde\mu_D^w(A)\mid Z=z]
=\int_A\widetilde\lambda_w(z,u)\dd u
\]
whenever the conditional mean measure has a Lebesgue density. Define
\[
\widetilde Y_{i,h_U}(u)
:=\int_{\OmegaL}\kappa_{U,h_U}(u,v)\dd\widetilde\mu_i^w(v)
\]
and estimate this density by
\begin{equation}\label{eq:normalized-nw-estimator}
\widehat{\widetilde\lambda}_{w,n}(z,u)
:=\frac{\sum_{i=1}^n
\kappa_{Z,h_Z}(z,Z_i)\widetilde Y_{i,h_U}(u)}
{\sum_{i=1}^n\kappa_{Z,h_Z}(z,Z_i)},
\end{equation}
with the same zero-denominator convention as in \eqref{eq:nw-estimator}.
Let $\omega_{\widetilde\lambda_w}$ denote the joint modulus of continuity
defined as in \Cref{sec:supnorm-rate}.

\begin{theorem}[Uniform normalized-density rate]
\label{thm:normalized-rate}
Assume \Cref{ass:kernels,ass:geometry-Z,ass:design} and that
$\widetilde\lambda_w$ is bounded and uniformly continuous. Then there is
$C<\infty$, depending only on the fixed model quantities, such that, for
every $\delta\in(0,1)$ and all sufficiently large $n$, with probability at
least $1-\delta$,
\[
\norm{
\widehat{\widetilde\lambda}_{w,n}
-\widetilde\lambda_w
}_\infty
\leq
C\left\{
\omega_{\widetilde\lambda_w}(h_Z+h_U)
+r_n(h_Z,h_U,\delta)
\right\}.
\]
Consequently, the left-hand side converges to zero in probability.
\end{theorem}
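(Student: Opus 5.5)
The plan is to reduce \Cref{thm:normalized-rate} to the proof of \Cref{thm:supnorm-rate}, with $\mu_i^w$ replaced by the probability measure $\widetilde\mu_i^w$. The unnormalized proof uses only a few properties of the response. First, the kernelized response $Y^w_{i,h_U}$ is built linearly from a random measure with a bounded total mass. Second, its conditional mean measure $\Lambda_z^w$ has a bounded, uniformly continuous density. Third, the design and kernels satisfy \Cref{ass:kernels,ass:geometry-Z,ass:design}. Under (E2), $\widetilde\mu_i^w(\OmegaL)=1$ almost surely, so the envelope condition (E1) holds with $M_w=1$. By definition, the conditional mean measure of $\widetilde\mu_i^w$ is $\widetilde\Lambda_z^w$, whose density $\widetilde\lambda_w$ is assumed bounded and uniformly continuous; this replaces \Cref{ass:first-moment}. \Cref{ass:weight} enters the unnormalized argument only through the mass bound. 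I will check this, and confirm that Lipschitz continuity of $w$ is never used, because the kernel $\kappa_{U,h_U}(u,\cdot)$ is applied to the measure and not to $w$.

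With these substitutions, I rerun the numerator--denominator decomposition. Write $\widehat N(z,u)=n^{-1}\sum_i\kappa_{Z,h_Z}(z,Z_i)\widetilde Y_{i,h_U}(u)$ and $\widehat D(z)=n^{-1}\sum_i\kappa_{Z,h_Z}(z,Z_i)$, with population counterparts $N=\E\widehat N$ and $D=\E\widehat D$.

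For the deterministic part, condition on $Z$ and use $\E[\widetilde Y_{i,h_U}(u)\mid Z_i=z']=\int\kappa_{U,h_U}(u,v)\widetilde\lambda_w(z',v)\,dv$. Then $N/D$ is a weighted average of $\widetilde\lambda_w$ over the region $\{\|z'-z\|\le h_Z,\|v-u\|\le h_U\}$, with the weights normalized by \eqref{eq:kernel-normalization}. Hence $|N/D-\widetilde\lambda_w|\le\omega_{\widetilde\lambda_w}(h_Z+h_U)$ uniformly, including near the boundary.

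For the stochastic part, the function class is
\[
\{(z',\nu)\mapsto\kappa_{Z,h_Z}(z,z')\textstyle\int\kappa_{U,h_U}(u,v)\,d\nu(v)\}.
\]
It has envelope $\lesssim h_Z^{-d}h_U^{-2}$, using total mass one and the uniform lower bounds on $c_{Z,h}$ and $c_{U,h}$. Its polynomial covering numbers follow from Lipschitz continuity in $(z,u)$. The remaining ingredient is the conditional second moment. By Cauchy--Schwarz with respect to the probability measure $\widetilde\mu_i^w$,
\[
\widetilde Y_{i,h_U}(u)^2\le\int\kappa_{U,h_U}(u,v)^2\,d\widetilde\mu_i^w(v).
\]
Taking conditional expectations and using boundedness of $\widetilde\lambda_w$ gives a bound of order $h_U^{-2}$, so the variance is $\lesssim h_Z^{-d}h_U^{-2}$. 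This is the step where normalization actually helps, since the mass is exactly one. The empirical-process inequality then gives $\sup|\widehat N-N|\lesssim r_n(h_Z,h_U,\delta)$ and $\sup|\widehat D-D|\lesssim r_{Z,n}\le r_n$ with probability at least $1-\delta$.

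The exact centered ratio identity
\[
\frac{\widehat N}{\widehat D}-\frac ND=\frac{(\widehat N-N)-(N/D)(\widehat D-D)}{\widehat D},
\]
together with $D\ge c>0$, $\sup|N/D|\le\|\widetilde\lambda_w\|_\infty$, and $\widehat D\ge D/2$ for large $n$, completes the bound. On that event the zero-denominator convention is irrelevant. Convergence in probability follows because $r_n\to0$ under \Cref{ass:kernels} and $\omega_{\widetilde\lambda_w}(h)\to0$.

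The main obstacle is bookkeeping, not new mathematics. I need to confirm that every constant in \Cref{app:supnorm} depends on the response only through the mass bound and $\|\lambda_w\|_\infty$, so that substituting $M_w=1$ and $\|\widetilde\lambda_w\|_\infty$ is legitimate. I also need to check measurability of $\widetilde\mu_i^w$. This is handled by the standing assumption that $\mu_D^w(\OmegaL)>0$ almost surely, which makes $\widetilde\mu_i^w$ a measurable random probability measure.
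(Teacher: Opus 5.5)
Your proposal is correct and follows essentially the paper's proof. The paper likewise:
\begin{itemize}
\item applies the variable-envelope numerator lemma with $M_n=1$;
\item gets the bias bound from the local-average representation of \Cref{lem:double-kernel-bias} applied to $\widetilde\mu^w$;
\item treats the Cauchy--Schwarz bound $\E[\widetilde Y_{i,h_U}(u)^2\mid Z_i=z]\leq\norm{\widetilde\lambda_w}_\infty\sup_u\int\kappa_{U,h_U}(u,v)^2\dd v\leq Ch_U^{-2}$ as the only new response-specific check;
\item concludes with the same centered ratio identity on the event where $\inf_z\widehat D_n(z)\geq c_Z/2$.
\end{itemize}
Your observation that the Lipschitz property of $w$ is never needed also agrees with the paper, which does not assume \Cref{ass:weight} in this theorem.
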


The proof is in \Cref{app:normalized}. If
$0<\Pp\{\mu_D^w(\overline{\Omega}_L)>0\}<1$, the result instead applies
under the retained law
$\Pp^+(\cdot)=\Pp(\cdot\mid\mu_D^w(\overline{\Omega}_L)>0)$, provided the
assumptions hold under $\Pp^+$. Under the H\"older conditions of
\Cref{cor:holder-rate}, it gives the same rates as the unnormalized estimator.

\subsection{Relaxing bounded mass and uniform design}
\label{sec:truncation}\label{sec:trimmed}

We record two extensions of \Cref{thm:supnorm-rate}: truncation replaces the
bounded-mass envelope, while denominator trimming permits a design density that
is not bounded away from zero.

\paragraph{Unbounded weighted mass.}
Set $M(D):=\mu_D^w(\OmegaL)$. For $M_n\uparrow\infty$, let
$\lambda_{w,n}^{\mathrm{tr}}$ be the density of
\[
A\longmapsto
\E\!\left[\mu_D^w(A)\ind{M(D)\leq M_n}\mid Z=z\right].
\]
\begin{proposition}[Moment alternative via truncation]\label{prop:truncation}
Assume \crefrange{ass:weight}{ass:first-moment}, but not
\Cref{ass:envelope}\textup{(E1)}. Suppose, for a constant $C_Y$ independent
of $n$, that
\[
n\Pp\{M(D)>M_n\}\to0,
\qquad
\frac{M_n\log n}{nh_Z^dh_U^2}\to0,
\qquad
\norm{\lambda_{w,n}^{\mathrm{tr}}-\lambda_w}_\infty\to0,
\]
and
\[
\sup_{z\in\calZ,u\in\OmegaL}
\E\!\left[(Y^w_{h_U}(u))^2\mid Z=z\right]
\leq C_Yh_U^{-2}.
\]
Then $\norm{\widehat\lambda_{w,n}-\lambda_w}_\infty\overset{P}{\to}0$.
\end{proposition}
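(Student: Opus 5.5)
We reduce to the bounded-mass situation of \Cref{thm:supnorm-rate} via truncation. Define the truncated measures $\mu_i^{w,\mathrm{tr}}:=\mu_i^w\ind{M(D_i)\leq M_n}$ and let $\widehat\lambda^{\mathrm{tr}}_{w,n}$ be the estimator \eqref{eq:nw-estimator} built from them. On the event $E_n:=\{M(D_i)\leq M_n,\ i=1,\dots,n\}$ the two estimators coincide identically in $(z,u)$. By a union bound, $\Pp(E_n^c)\leq n\Pp\{M(D)>M_n\}\to0$. It therefore suffices to show $\norm{\widehat\lambda^{\mathrm{tr}}_{w,n}-\lambda_w}_\infty\overset P\to0$. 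We split this as
\[
\norm{\widehat\lambda^{\mathrm{tr}}_{w,n}-\lambda^{\mathrm{tr}}_{w,n}}_\infty+\norm{\lambda^{\mathrm{tr}}_{w,n}-\lambda_w}_\infty .
\]
The second term vanishes by hypothesis.

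For the first term we rerun the proof of \Cref{thm:supnorm-rate} with target $\lambda^{\mathrm{tr}}_{w,n}$ and track the dependence on the envelope.

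\textbf{Bias.} The deterministic part is the normalized local average of $\lambda^{\mathrm{tr}}_{w,n}$ over a $(h_Z+h_U)$-neighbourhood, whose deviation from $\lambda^{\mathrm{tr}}_{w,n}$ is at most $2\norm{\lambda^{\mathrm{tr}}_{w,n}-\lambda_w}_\infty+\omega_{\lambda_w}(h_Z+h_U)$. Both quantities tend to $0$, using uniform continuity from \Cref{ass:first-moment}.

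\textbf{Denominator.} The denominator is unaffected by truncation, so its uniform control by $r_{Z,n}$ carries over verbatim.

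\textbf{Numerator.} This is the main work. The class indexed by $(z,u)$ of functions
\[
(Z,D)\mapsto\kappa_{Z,h_Z}(z,Z)\int\kappa_{U,h_U}(u,v)\dd\mu^{w,\mathrm{tr}}_D(v)
\]
keeps polynomial covering numbers, by the same Lipschitz argument as before with constants multiplied by $M_n$. Its envelope is $\lesssim M_nh_Z^{-d}h_U^{-2}$, since truncation yields mass at most $M_n$. Its variance is $\lesssim h_Z^{-d}\sup_z\E[(Y^w_{h_U})^2\mid Z=z]\leq C_Yh_Z^{-d}h_U^{-2}$ by the assumed second-moment bound, since truncation only reduces the square. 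Here, and not in the envelope, the second-moment hypothesis replaces the Cauchy--Schwarz step that previously used (E1).

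Applying the same Talagrand/Bernstein-type empirical-process inequality then gives a uniform deviation of order
\[
\sqrt{\frac{\log n}{nh_Z^dh_U^2}}+\frac{M_n\log n}{nh_Z^dh_U^2}.
\]
The covering-number dependence on $M_n$ enters only through $\log M_n$. This is harmless provided $\log M_n=O(\log n)$, which follows from $M_n\lesssim n h_Z^dh_U^2/\log n\leq n^{C}$. Both terms vanish by \Cref{ass:kernels} and the condition $M_n\log n/(nh_Z^dh_U^2)\to0$. The centered ratio identity from the proof of \Cref{thm:supnorm-rate} then transfers the numerator and denominator bounds to the ratio, with $\norm{\lambda^{\mathrm{tr}}_{w,n}}_\infty$ bounded uniformly in $n$ because it is close to the bounded $\lambda_w$.

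The main obstacle is this numerator step: verifying that the empirical-process bound separates the variance term, which must not grow with $M_n$, from the envelope term, which may grow linearly in $M_n$. I would check that the inequality used in \Cref{app:supnorm} has exactly the form $\sigma\sqrt{\log(\cdot)/n}+U\log(\cdot)/n$. If instead it only depends on the envelope, I would split the sum at the level of the Bernstein inequality on a finite grid and handle the discretization by the Lipschitz continuity of the kernels.
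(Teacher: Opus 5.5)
Your proposal is correct and follows essentially the same route as the paper. You truncate, identify the truncated and original estimators on an event of probability at least $1-n\Pp\{M(D)>M_n\}$, and bound the bias of the truncated target by $\omega_{\lambda_w}(h_Z+h_U)+2\|\lambda^{\mathrm{tr}}_{w,n}-\lambda_w\|_\infty$ via the local-average representation. You then control the numerator with the variable-envelope bound of \Cref{lem:numerator-concentration}, whose form $\sigma\sqrt{H/n}+BH/n$, with $M_n$ entering $H$ only through $\log M_n\le\log n$, is exactly what you anticipated, so your fallback grid argument is not needed.
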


\begin{proposition}[Conditionally Poisson diagrams]\label{prop:poisson-truncation}
Assume \crefrange{ass:weight}{ass:first-moment}, with $w>0$ on $\OmegaLo$.
Conditionally on $Z=z$, let $D$ be a Poisson random measure with possibly
sigma-finite intensity $\lambda_w(z,u)/w(u)$. If
\[
\frac{nh_Z^dh_U^2}{(\log n)^2}\to\infty,
\]
then $M_n=A\log n$, for every sufficiently large $A$, satisfies the
conditions of \Cref{prop:truncation}. Consequently,
$\norm{\widehat\lambda_{w,n}-\lambda_w}_\infty\overset{P}{\to}0$.
\end{proposition}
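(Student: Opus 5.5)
We verify the four conditions of \Cref{prop:truncation} for $M_n=A\log n$ and then invoke that proposition. Throughout, write $N(z):=\int_{\OmegaL}\lambda_w(z,u)\dd u=\Lambda_z^w(\OmegaL)$. Under \Cref{ass:first-moment} this is bounded by $\bar N:=L^2\norm{\lambda_w}_\infty/2$. Conditionally on $Z=z$, the weighted mass $M(D)=\int w\dd\mu_D$ is a Poisson functional. Campbell's formula gives its Laplace transform
\[
\E[e^{tM(D)}\mid Z=z]=\exp\Bigl\{\int(e^{tw(u)}-1)\frac{\lambda_w(z,u)}{w(u)}\dd u\Bigr\}.
\]
Since $w\leq\norm{w}_\infty=:W$ and $(e^{tw}-1)/w\leq (e^{tW}-1)/W$, the exponent is at most $\bar N(e^{tW}-1)/W$ uniformly in $z$. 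This holds even though the unweighted intensity $\lambda_w/w$ may be only sigma-finite near the diagonal. Taking $t=1/W$, Chernoff's bound gives $\Pp\{M(D)>A\log n\}\leq C n^{-A/W}$. Hence $n\Pp\{M>M_n\}\to0$ once $A>W$.

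The rate condition $M_n\log n/(nh_Z^dh_U^2)=A(\log n)^2/(nh_Z^dh_U^2)\to0$ is exactly the strengthened bandwidth hypothesis. For the truncation bias, the truncated mean measure differs from $\Lambda_z^w$ by $A\mapsto\E[\mu_D^w(A)\ind{M>M_n}\mid Z=z]$, so I must bound its density uniformly in $u$. The plan is to use the Mecke (Slivnyak) formula:
\[
\E\Bigl[\int_A w\,\ind{M(D)>M_n}\dd\mu_D\Bigm| z\Bigr]=\int_A \lambda_w(z,u)\,\Pp\{M(D)+w(u)>M_n\mid z\}\dd u.
\]
This shows that the error density is $\lambda_w(z,u)\Pp\{M(D)>M_n-w(u)\}\leq \norm{\lambda_w}_\infty Ce^{W/W}n^{-A/W}$, which tends to $0$ uniformly. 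It also confirms that the truncated measure has a density, so $\lambda_{w,n}^{\mathrm{tr}}$ is well defined.

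The step I expect to require the most care is the conditional second-moment bound for $Y^w_{h_U}(u)=\int\kappa_{U,h_U}(u,v)\dd\mu_D^w(v)$. For a Poisson process the variance of a linear functional is explicit:
\[
\E[Y^2\mid z]=\Bigl(\int\kappa_{U,h_U}(u,v)\lambda_w(z,v)\dd v\Bigr)^2+\int\kappa_{U,h_U}(u,v)^2\,w(v)\lambda_w(z,v)\dd v.
\]
By \Cref{obs:kernel-bounds} the normalized kernel satisfies $\kappa_{U,h}\leq Ch^{-2}$ and integrates to one in $v$ on a set of area $O(h^2)$. The first term is therefore at most $\norm{\lambda_w}_\infty^2$ times a constant. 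The second term is at most $W\norm{\lambda_w}_\infty\cdot Ch^{-2}\int\kappa_{U,h}(u,v)\dd v$, and the integral in $v$ (rather than in $u$) is $O(1)$ because $c_{U,h}$ is bounded below. This gives $C_Yh_U^{-2}$ with $C_Y$ independent of $n$.

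The main subtlety is that the weight factor $w(v)$, not $w(v)^{-1}$, appears in the second term. This is what makes the sigma-finite unweighted intensity harmless, and the same mechanism was used in the Laplace bound. With all four conditions verified, \Cref{prop:truncation} yields $\norm{\widehat\lambda_{w,n}-\lambda_w}_\infty\overset{P}{\to}0$.
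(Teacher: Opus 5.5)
Your proposal is correct and follows essentially the same route as the paper. Both arguments use the Poisson exponential formula with a Chernoff bound for $n\Pp\{M(D)>A\log n\}\to0$, the Mecke equation to identify the tail-bias density $\lambda_w(z,u)\Pp\{M(D)+w(u)>M_n\mid Z=z\}$, the Poisson variance formula (with $w$ rather than $w^{-1}$ in the second term) for the $C_Yh_U^{-2}$ bound, and the strengthened bandwidth condition for the envelope term. Your differences from the paper are cosmetic: you fix $t=1/W$ and use the sharper secant bound $(e^{tw}-1)/w\le(e^{tW}-1)/W$.
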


\paragraph{Vanishing design density.}
When $p_Z$ is not bounded away from zero, consider the integrated uniform loss
\begin{equation}\label{eq:integrated-loss}
\mathcal L_{\mathrm{IU}}(\widehat\lambda,\lambda_w)
:=
\int_\calZ\sup_{u\in\OmegaL}
|\widehat\lambda(z,u)-\lambda_w(z,u)|\dd P_Z(z)
\leq\norm{\widehat\lambda-\lambda_w}_\infty.
\end{equation}
Write
\[
\begin{aligned}
\widehat D_n(z)&:=\frac1n\sum_i\kappa_{Z,h_Z}(z,Z_i),
&D_h(z)&:=\E\widehat D_n(z),\\
\widehat N_n(z,u)&:=\frac1n\sum_i
\kappa_{Z,h_Z}(z,Z_i)Y^w_{i,h_U}(u).&&
\end{aligned}
\]
and, for $\rho_n\downarrow0$ and $B<\infty$, define
\[
\widehat\lambda^{\rho_n,B}_{w,n}(z,u)
:=\left\{\frac{\widehat N_n(z,u)}
{\widehat D_n(z)\vee\rho_n}\right\}\wedge B.
\]

\begin{proposition}[Trimmed integrated consistency]\label{prop:trimmed}
Assume \Cref{ass:weight,ass:kernels,ass:geometry-Z,ass:first-moment} and
\Cref{ass:envelope}\textup{(E1)}, and suppose $p_Z\leq C_Z<\infty$ on
$\calZ$. For every $\varepsilon>0$, suppose there exist a measurable
$A_\varepsilon\subset\calZ$, $m_{\varepsilon,n}>0$, and
$\delta_{\varepsilon,n}\downarrow0$ such that
\[
\begin{aligned}
P_Z(A_\varepsilon)&>1-\varepsilon,
&\inf_{z\in A_\varepsilon}D_h(z)&\geq m_{\varepsilon,n},\\
\frac{r_n(h_Z,h_U,\delta_{\varepsilon,n})}{m_{\varepsilon,n}}&\to0,
&\rho_n&\leq\frac{m_{\varepsilon,n}}2
\quad\text{eventually}.
\end{aligned}
\]
If $B>\norm{\lambda_w}_\infty$, then
\[
\mathcal L_{\mathrm{IU}}
(\widehat\lambda^{\rho_n,B}_{w,n},\lambda_w)
\overset{P}{\to}0.
\]
\end{proposition}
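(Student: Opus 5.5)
The plan is to reduce the trimmed integrated loss to a sup-norm statement on the good set $A_\varepsilon$ and to use the clipping at $B$ off that set. Since both $\widehat\lambda^{\rho_n,B}_{w,n}$ and $\lambda_w$ take values in $[0,B]$ (the numerator is nonnegative, and $B>\norm{\lambda_w}_\infty$), the integrand of $\mathcal L_{\mathrm{IU}}$ is bounded by $B$ everywhere. Hence
\[
\mathcal L_{\mathrm{IU}}(\widehat\lambda^{\rho_n,B}_{w,n},\lambda_w)
\leq B\,P_Z(A_\varepsilon^c)+\sup_{z\in A_\varepsilon,u\in\OmegaL}\bigl|\widehat\lambda^{\rho_n,B}_{w,n}(z,u)-\lambda_w(z,u)\bigr|
\leq B\varepsilon+\sup_{A_\varepsilon\times\OmegaL}|\cdots|.
\]
It therefore suffices to show that, for each fixed $\varepsilon$, the supremum over $A_\varepsilon\times\OmegaL$ tends to zero in probability. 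Then we let $\varepsilon\downarrow0$.

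\textbf{Uniform deviations.} I would reuse the empirical-process bounds from the proof of \Cref{thm:supnorm-rate}. Those steps used only bounded weighted mass (E1), the kernel conditions, the cone condition, and the \emph{upper} bound $p_Z\leq C_Z$ for the variance and envelope calculations. With probability at least $1-\delta_{\varepsilon,n}$ they give, uniformly over $z\in\calZ$ and $u\in\OmegaL$,
\[
|\widehat D_n(z)-D_h(z)|\lesssim r_{Z,n}\leq r_n,\qquad |\widehat N_n(z,u)-N_h(z,u)|\lesssim r_n(h_Z,h_U,\delta_{\varepsilon,n}),
\]
where $N_h:=\E\widehat N_n$. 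The thing to check is that the lower bound $c_Z$ was never needed for these deviation bounds; I believe it enters only the denominator lower bound. On this event, for $z\in A_\varepsilon$ we have $\widehat D_n(z)\geq m_{\varepsilon,n}-Cr_n\geq m_{\varepsilon,n}/2\geq\rho_n$ eventually, using $r_n/m_{\varepsilon,n}\to0$. So trimming is inactive on $A_\varepsilon$, and since clipping at $B$ can only reduce the distance to $\lambda_w\le B$, it can be ignored.

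\textbf{Ratio decomposition.} Write $N_h(z,u)=\int\kappa_{Z,h_Z}(z,z')p_Z(z')\bar\lambda_{h_U}(z',u)\,dz'$, where $\bar\lambda_{h_U}(z',u)=\int\kappa_{U,h_U}(u,v)\lambda_w(z',v)\,dv$ is a genuine boundary-normalized local average. Then $N_h/D_h$ is a weighted average of $\lambda_w$ over a $(h_Z+h_U)$-neighbourhood, so $|N_h/D_h-\lambda_w|\leq\omega_{\lambda_w}(h_Z+h_U)\to0$ by uniform continuity. This holds whenever $D_h(z)>0$, which is true on $A_\varepsilon$. For the stochastic part I use the exact identity
\[
\frac{\widehat N_n}{\widehat D_n}-\frac{N_h}{D_h}=\frac{(\widehat N_n-N_h)-(N_h/D_h)(\widehat D_n-D_h)}{\widehat D_n},
\]
and bound it by $(2/m_{\varepsilon,n})\bigl(Cr_n+\norm{\lambda_w}_\infty Cr_{Z,n}\bigr)$, which is $O(r_n/m_{\varepsilon,n})\to0$. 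Since $\delta_{\varepsilon,n}\to0$, the good event has probability tending to one. This completes the argument.

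\textbf{Main obstacle.} I expect the main difficulty to be confirming that the numerator deviation bound of \Cref{thm:supnorm-rate} survives without $p_Z\ge c_Z$. The conditional second-moment bound $\E[Y^2\mid Z]\lesssim h_U^{-2}$ comes from (E1) and boundedness of $\lambda_w$, and the variance of $\kappa_{Z,h_Z}(z,Z)Y$ is bounded by $C_Z\,\sup c_{Z,h}^{-1}\,h_Z^{-d}h_U^{-2}$, so only the upper bound on $p_Z$ appears. If that proof's event were phrased with the random denominator, I would restate it in terms of the unnormalized numerator $\widehat N_n$ directly.
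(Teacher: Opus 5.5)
Your proposal is correct and follows essentially the same route as the paper's proof. Both bound the deviations of $\widehat D_n$ and $\widehat N_n$ using only $p_Z\le C_Z$, observe that the floor $\rho_n$ is inactive on $A_\varepsilon$, apply the local-average bias bound and the exact ratio identity there, and control the complement of $A_\varepsilon$ by $B\varepsilon$ through clipping. The only cosmetic difference is that the paper's good event has probability at least $1-2\delta_{\varepsilon,n}$, from a union bound over the two concentration events, which changes nothing.
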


The truncation conditions, Poisson calculations, and proofs are given in
\Cref{app:truncation-details,app:truncation,app:poisson-truncation,app:trimmed}.
The local-mass condition permits $p_Z$ to vanish on negligible boundary
regions; rates require a quantitative version of this condition.

\subsection{Empirical verifications}\label{sec:empirical-verifications}

We examined two empirical implications of the theory in simulations with
analytically known conditional weighted intensities; \Cref{sec:simulations}
gives the complete designs and results. First, in the conditional Poisson
experiment summarized in \Cref{tab:empirical-verifications}, the relative
sup loss at $n=10^5$ was $3.3$--$4.1\%$ for $d=1$ and
$6.1$--$6.9\%$ for $d=2$, increasing to $15.1$--$17.0\%$ for $d=4$.
Thus accuracy deteriorates with the effective dimension $d+2$, while the
empirical exponents at the reported bandwidths remain positive. Severe
undersmoothing behaves differently: with $c_{\mathrm{bw}}=0.10$ and $d=4$,
the effective local kernel sample size is too small at the simulated sample
sizes and the fitted exponents are negative, consistent with the
effective-sample-size condition in \Cref{ass:kernels}.

\begin{table}[t]
\centering
\caption{Forward-study verification against analytically known conditional weighted intensities.
For each data-generating process (DGP) and covariate dimension $d$,
$c_{\mathrm{bw}}$ is the multiplier minimizing the mean integrated sup loss
at $n=10^5$ among $\{0.10,0.25,0.50,0.75\}$. Losses are averages over $100$ replicates. The
empirical exponent $\widehat r_\infty$ is minus the slope from regressing log
integrated sup loss on $\log n$ over $n\in\{10^2,10^3,10^4,10^5\}$; positive
values indicate decreasing loss.}
\label{tab:empirical-verifications}
\small
\begin{tabular}{lccccc}
\toprule
DGP & $d$ & $c_{\mathrm{bw}}$ & Int. sup & Rel. sup (\%) & $\widehat r_\infty$\\
\midrule
location & 1 & 0.75 & 0.320 & 3.5 & 0.385\\
         & 2 & 0.50 & 0.591 & 6.9 & 0.290\\
         & 4 & 0.50 & 1.383 & 16.5 & 0.178\\
mass     & 1 & 0.75 & 0.333 & 3.3 & 0.406\\
         & 2 & 0.50 & 0.623 & 6.1 & 0.316\\
         & 4 & 0.50 & 1.724 & 17.0 & 0.177\\
mixed    & 1 & 0.75 & 0.411 & 4.1 & 0.365\\
         & 2 & 0.50 & 0.694 & 6.3 & 0.291\\
         & 4 & 0.50 & 1.689 & 15.1 & 0.178\\
\bottomrule
\end{tabular}
\end{table}

Second, in a mixed-process experiment with $d=1$ and $n=1{,}000$, the
unbiased-risk criterion selected $c_{\mathrm{bw}}=0.50$, the oracle candidate
on the chosen grid, in all $12$ replicates. The selected and oracle fits
therefore had the same mean integrated squared error, $0.124$ (sd $0.028$),
compared with $0.574$ ($0.052$) at the neighbouring undersmoothed multiplier
$0.25$ and $0.190$ ($0.025$) at the oversmoothed multiplier $0.75$. This small,
coarse-grid experiment is not an oracle-efficiency study, but it verifies that
the criterion distinguishes the variance- and bias-dominated regimes.

\FloatBarrier
\section{Cerebral artery trees: age-associated changes in rooted morphology}
\label{sec:arteries}

We analyse the 98-subject cerebral-artery tree release distributed by
\citet{bendich2016persistent}, which derives from the magnetic-resonance angiography
study of \citet{bullitt2010effects} and its subsequent morphology-based quality control
\citep{aydin2011visualizing}. Age associations have been found repeatedly in this
benchmark archive. Our aim is to determine which topological events, as parametrized by
persistence diagrams, carry that association and to localize them on the observed blood
vessels. For each rooted artery tree, we compute extended persistence of Euclidean
distance from the supplied root, regress the resulting persistence measures continuously
on age, and retain the generating vertex of every pair for exact backmapping. Complete
preparation, tuning, inferential, and sensitivity details are in
\Cref{app:arteries-details}.

\subsection{Data, descriptor, and analysis}
\label{sec:arteries-data}
\label{sec:arteries-method}

The archive contains four labelled arterial systems for each of 98 non-pathological
subjects aged 19--79 years. Its deposited centreline graphs comprise 509 connected tree
components, each with a supplied root. For a component \(T\subset\mathbb R^3\) with root
\(r\), the Topological Morphology Descriptor \citep{kanari2018topological} is constructed
from
\[
    f(x)=\lVert x-r\rVert,\qquad x\in T.
\]
Because \(f\) need not vary monotonically along a branch, its local minima and maxima
record inward-to-outward and outward-to-inward radial reversals. Extended persistence
\citep{cohen2009extending} gives finite pairs for both fields. We display either type at
\(u=(q,p)\), where \(q=\min\{b,d\}\) is the lower endpoint and \(p=|d-b|\) is its
persistence; for a local maximum the generator radius is \(q+p\). There are
\(183{,}035\) local-minimum and \(185{,}592\) local-maximum pairs. The global
minimum--maximum class of each
component is analysed separately in \Cref{app:arteries-global}.

With persistence weight \(w(q,p)=p\), let \(\mathcal J_{isg}\) index the pairs of subject
\(i\), system \(s\), and field \(g\in\{\mathrm{min},\mathrm{max}\}\). We compare the raw
measure with persistence mass per 1,000 mm of observed centreline:
\[
\begin{aligned}
 \mu_{ig}^{\mathrm{raw}}
   &=\sum_{s=1}^4\sum_{j\in\mathcal J_{isg}}
       p_{isgj}\delta_{(q_{isgj},p_{isgj})},\\
 \mu_{ig}^{\mathrm{len}}
   &=\sum_{s=1}^4\frac{1000}{L_{is}}
       \sum_{j\in\mathcal J_{isg}}
       p_{isgj}\delta_{(q_{isgj},p_{isgj})},
\end{aligned}
\]
where \(L_{is}\) is the polyline length of system \(s\). Standardization is performed
within system before the four measures are summed; it changes mass but neither diagram
coordinates nor the measure into a probability distribution.

For \(m\in\{\mathrm{raw},\mathrm{len}\}\), we estimate the conditional intensity
\(\lambda_g^m(a,u)\), the density of \(\E[\mu_{ig}^m\mid A_i=a]\), and report the
upper-minus-lower-quartile contrast
\[
    \widehat\Delta_g^m(u)
    =\widehat\lambda_g^m(57,u)-\widehat\lambda_g^m(31.25,u).
\]
These ages are evaluation anchors, not fitted groups: all 98 subjects enter the
continuous-age regression. Diagram and age bandwidths are selected sequentially, after
which 4,999 complete-subject bootstrap resamples give a studentized 95\% band that is
simultaneous over a predeclared inference set and both finite fields. The selected
bandwidths, exact criteria, inference set, and fixed-bandwidth bootstrap statistic are
reported in \Cref{app:arteries-bandwidths,app:arteries-inference,tab:arteries-primary}.

\subsection{Results and spatial interpretation}
\label{sec:arteries-results}
\label{sec:arteries-spatial}

For the raw measures, both finite fields have broad negative selected regions
(\Cref{fig:arteries-primary}). Between ages 31.25 and 57, fitted feature count decreases
by 25.8\% in the principal local-minimum region and by 24.3\% in the principal
local-maximum region; fitted persistence mass decreases by 27.5\% and 33.0\%,
respectively. These contrasts combine changes in reversal geometry with differences in
the amount of reconstructed centreline.

After system-length standardization, no region is selected for local minima. The
local-maximum mask has five components; its well-supported principal component R1 is
localized at
\[
    71.2\le q\le85.9\ \mathrm{mm},
    \qquad 0\le p\le14.7\ \mathrm{mm}.
\]
Within R1, fitted feature count per unit system length changes from 115.83 to 100.51
(-13.2\%) and fitted persistence mass from 32.85 to 27.41 (-16.6\%). Thus a localized
decrease among outward-to-inward radial reversals remains after subjects are compared at
a common observed centreline length. The four smaller components and their support are
given in \Cref{app:arteries-secondary}.

\begin{figure}[tbp]
\centering
\includegraphics[width=\textwidth]{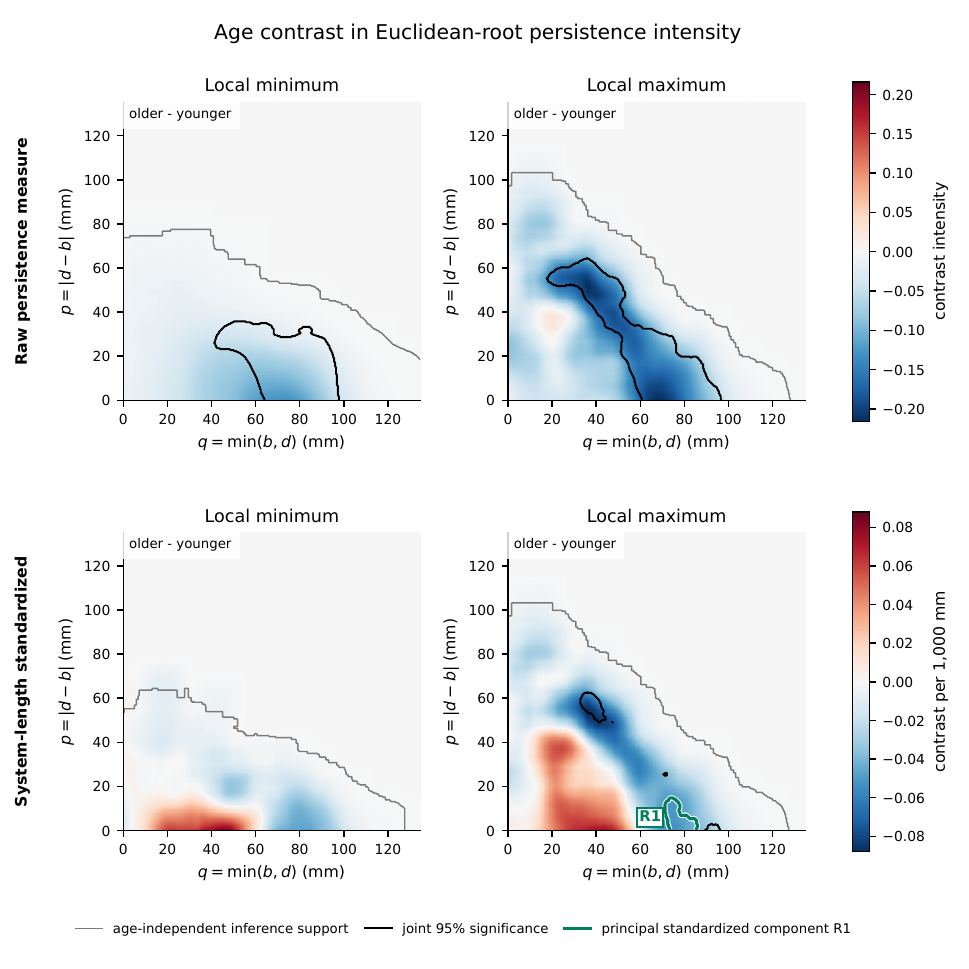}
\caption{Age-\(57\) minus age-\(31.25\) conditional weighted persistence-intensity contrasts.
Columns show local-minimum and local-maximum classes; rows show raw and
system-length-standardized measures. Grey contours delimit the age-independent inference
set, black contours delimit locations selected by the joint 95\% band, and the dark-green
contour identifies R1.}
\label{fig:arteries-primary}
\end{figure}

To assess whether R1 can be recovered from  radius alone, we replace each
local-maximum point \((q,p)\) by the one-dimensional coordinate \(r=q+p\), the Euclidean
distance of its generating maximum from the supplied root, while keeping the same
persistence weight \(w(q,p)=p\) used in the main analysis. Thus only the location of each
feature is projected from two dimensions to one. The resulting analysis selects no radius
interval at \(90\%\), \(95\%\), or \(99\%\), under either weighting scheme. The
two-dimensional R1 contrast is therefore not recovered by a radius-only inferential
summary. Radius-matched and
neighbouring-bandwidth diagnostics are reported in
\Cref{app:arteries-secondary,app:arteries-robustness,app:arteries-backmapping}.

The global minimum--maximum classes yield a complementary one-dimensional result.
Restricting the analysis to the largest-persistence component in each subject--system
gives exactly four global classes per subject. Relative to age \(31.25\), the fitted
unit-weighted intensity at age \(57\) is higher over maximum root radii
\(77.3\)--\(91.5\) mm and lower over \(97.9\)--\(104.6\) mm; both intervals remain
selected at \(99\%\). Because the number of classes is fixed, these contrasts describe a
redistribution of the fitted maximum-radius distribution rather than a change in class
count. The persistence-weighted analysis selects the same intervals and gives a \(4.0\%\)
decrease in fitted total global persistence. The complete analysis and its backmapping
figure are reported in \Cref{app:arteries-global,fig:arteries-global}.

In degree-zero persistence, each finite local-maximum class is generated at a
local-maximum vertex of the centreline. We retain the index of this vertex during the
persistence computation, so every selected class can be mapped exactly back to its
generator. Because the archive has
no common anatomical registration
\citep{aydin2009principal,bendich2016persistent}, we subtract each component's supplied
root but do not rotate, rescale, or anatomically align subjects; the resulting locations
are explicitly root-relative. R1 contains \(18{,}861\) observed generators concentrated
near an \(80\) mm root radius. Since R1 restricts both the lower merge level \(q\) and the
radial excursion \(p\), its generator radii \(q+p\) occupy a narrow interval, and a
band-like geometry is therefore expected for this particular selected region
(\Cref{fig:arteries-spatial}). This is not a generic consequence of the Euclidean-root
filtration: a selected region spanning a more articulated set of \((q,p)\) values, or
several substantially different ranges of \(q+p\), need not map to a layer-like spatial
geometry. A within-subject, radius-matched comparison still finds directional localization
within the observed vascular directions \((p_{\mathrm{MC}}=10^{-5})\), but this
post-selection diagnostic is not a test of an age-by-direction interaction.

The sign and broad location of the finite-field results persist under neighbouring
bandwidths, alternative age anchors and inference sets, and complete
leave-one-subject-out retuning; all four arterial systems contribute negatively in R1
when fitted separately at the pooled bandwidths
(\Cref{app:arteries-robustness}).

\begin{figure}[tbp]
\centering
\includegraphics[width=\textwidth]{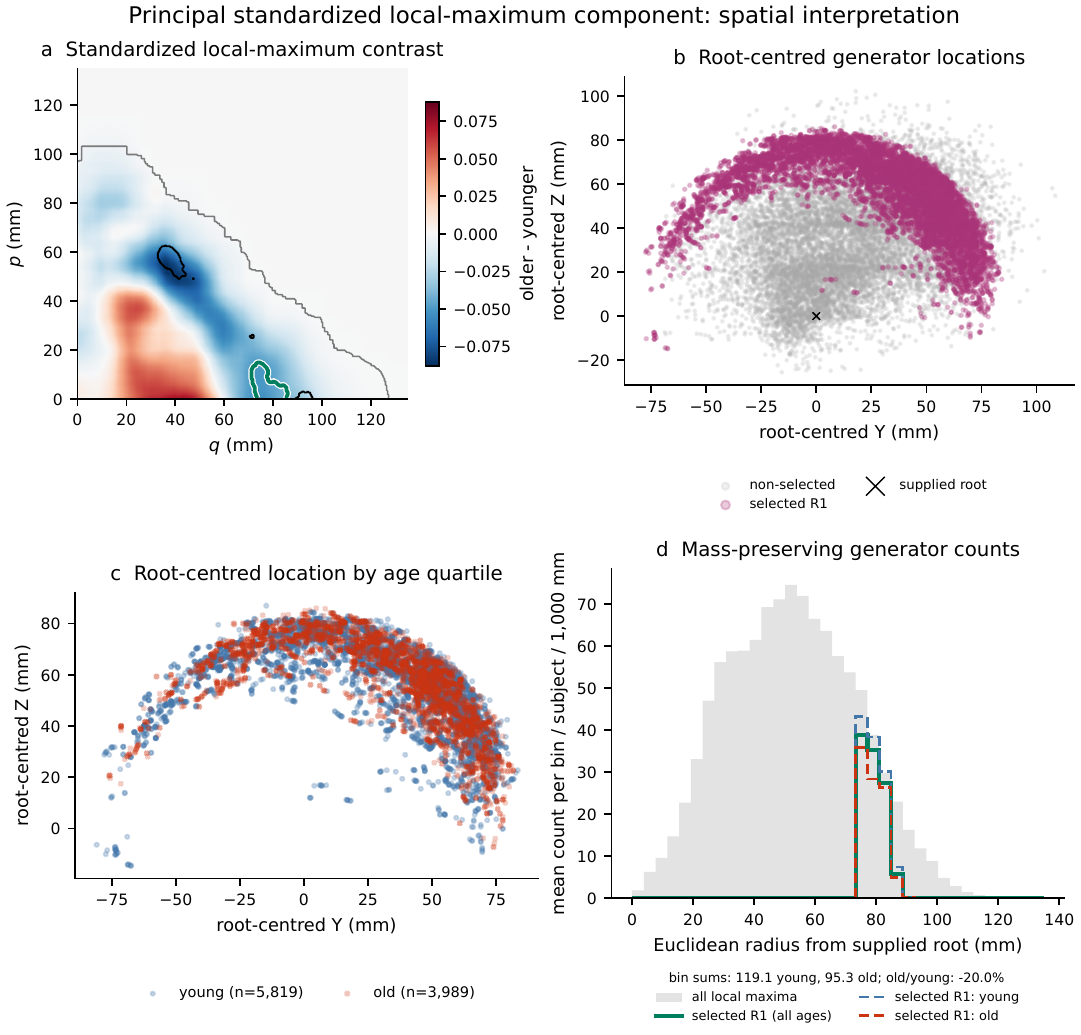}
\caption{Root-relative spatial interpretation of R1. Panels identify R1, compare its
generators with non-selected local maxima, display selected generators in the lower and
upper age quartiles, and show mass-preserving radial histograms. These are descriptive
backmaps of the region selected by the simultaneous persistence-plane analysis, not
atlas-anatomical localizations.}
\label{fig:arteries-spatial}
\end{figure}

The displayed young and old R1 histograms have standardized generator-count masses 119.1
and 95.3, a descriptive 20.0\% decrease. The selected generators are predominantly
ordinary degree-two centreline points rather than branch or terminal vertices. Together,
the persistence-plane contrast and exact backmapping identify a decrease in localized
outward-to-inward radial reversals per unit system length. Assigning the region to named
arterial territories would require anatomical registration or expert landmarks;
longitudinal data would be needed to track the same change within individuals.

\FloatBarrier

\subsection{Relation to previous analyses}
\label{sec:arteries-literature}

Earlier analyses of this archive found age associations in conventional vessel
attributes, combinatorial and metric tree structure, branching behaviour, functional
summaries, and persistence-based representations
\citep{bullitt2010effects,wang2007object,chang2013generalized,shen2014functional,bendich2016persistent,agerberg2025algebraic,matuk2024topogeometric}.
These studies establish a reproducible multiscale signal, but they do not combine
simultaneous selection in persistence space with exact generator-level localization on
the original centrelines. Our standardized result narrows that signal to ordinary
centreline reversals rather than simply to branch or terminal loss, while the global
classes retain a secondary redistribution of maximum root radius.

The peripheral radii found here are qualitatively consistent with the cortical-proximity
effect of \citet{skwerer2014tree}, but root-centred backmapping neither assigns a named
vessel nor establishes an angular age contrast. The closest geometric comparison is
\citet{guo2022statistical}, who found age effects after total-length normalization using
elastic shape graphs and visualized the edges with the largest fitted changes. That
framework requires cross-subject graph registration; ours compares within-subject radial
reversals in persistence space and backmaps them without asserting segment
correspondence. A fuller comparison with the benchmark literature is given in
\Cref{app:arteries-literature}.

\section{Discussion}\label{sec:discussion}

Statistical analyses of persistence diagrams have largely followed two complementary
routes. One treats diagrams as predictors, usually after mapping them into a vector or
function space; the other studies repeated diagrams from a single, unconditional
population. Much less attention has been given to settings in which the persistence
diagram is itself the response and is observed together with explanatory covariates.
This article develops a nonparametric framework for that problem. By representing each
weighted diagram as a finite random measure, we take its conditional mean measure as the
primary regression target and, when this measure is absolutely continuous, its density
as the conditional weighted persistence intensity. The resulting double-kernel
estimator smooths over both the covariate and persistence domains without specifying a
parametric law for the random diagram.

The theory clarifies both the statistical cost of this conditional formulation and the
role of the chosen target. For smooth conditional intensities, the estimator achieves a
uniform sup-norm rate with a matching minimax lower bound under bounded weighted mass
(\Cref{thm:lower-bound}). Direct estimation of the conditional mean measure in partial
optimal transport accommodates atoms and lower-dimensional support, and its stochastic
term depends on the covariate dimension rather than on the two-dimensional persistence
domain (\Cref{thm:pot-rate}). The normalized formulation provides a complementary
description of how persistence mass is distributed within a diagram, separately from
its total amount (\Cref{thm:normalized-rate}). For implementation, the unbiased
leave-one-out criterion in \Cref{prop:cv-risk} supplies a data-driven choice of the two
bandwidths. The empirical verifications behave consistently with these results: accuracy
deteriorates with effective dimension, severe undersmoothing becomes unstable when too
few diagrams receive appreciable local weight, and the cross-validation criterion
distinguishes the variance- and bias-dominated regimes
(\Cref{sec:empirical-verifications}).

The cerebral-artery analysis illustrates what is gained by making the diagram the
response rather than reducing it to a scalar summary. The raw analysis detects broad
age-associated reductions in persistence mass, whereas standardization by observed
centreline length isolates a more specific decrease among outward-to-inward radial
reversals. Generator-level backmapping then returns the selected region of the
persistence plane to the centreline events that produced it and shows that the retained signal is concentrated among ordinary centreline
reversals rather than simply among branch or terminal vertices. At the same time, the
interpretation must respect the limits of the data. The backmapped locations are
root-relative rather than atlas-anatomical, the study is cross-sectional, and the
reported contrasts describe age association rather than change within an individual or
a causal effect of ageing.

These results also delineate the scope of the framework. The conditional mean measure
is a first-order summary: it describes expected weighted persistence mass, but not the
full conditional law of a diagram or the dependence among its points. Note that, for marked
diagrams---including ordinary homology in several dimensions or the different fields of
extended persistence---the natural target is a collection
$\{\lambda_w^{(k)}(z,\cdot):k\in\calM\}$ and the present arguments apply separately to each
fixed mark. Joint modelling of dependence across marks, or inference on higher-order
features of the diagram distribution, however, requires additional targets and regularity
conditions.

Several directions for future work therefore arise naturally. On the theoretical side,
it remains to establish minimax lower bounds for direct estimation in partial optimal
transport and to determine whether the logarithmic factor at $q=1$ in
\Cref{thm:pot-rate} is intrinsic. A complementary inferential problem is to justify
fixed-bandwidth simultaneous bootstrap bands while explicitly accounting for the
diagonal boundary. For bandwidth selection, an oracle inequality or a Lepski-type
adaptation result would strengthen the unbiased-risk identity by quantifying the
performance of the selected estimator. On the modelling side, extensions to dependent
or longitudinal diagrams would be particularly valuable when several diagrams are
observed from the same unit. In the artery application, longitudinal imaging combined
with anatomical registration would make it possible to track changes within subjects
and to relate the detected radial reversals to named vascular territories. More broadly,
the same regression-and-backmapping strategy can be used in materials microstructure,
dynamical systems, biomedical shape analysis, and simulation ensembles whenever
persistence coordinates retain a direct scientific interpretation
\citep{wasserman2018topological}.

\section*{Supplementary materials}
\addcontentsline{toc}{section}{Supplementary materials}
The online supplementary materials contain \Cref{app:forward}, with the technical
arguments and proofs for the estimation theory; \Cref{sec:cv}, with bandwidth-selection
details and its proof; \Cref{app:simulation-results}, with the exact simulation designs
and numerical tables;
\Cref{app:arteries-details}, with the complete cerebral-artery implementation and
sensitivity analyses; and a code archive containing the forward-study integrated
and relative sup, $L^1$, and integrated squared error curves and reproducing the
estimators, bandwidth selection, tables, and figures.

\section*{Acknowledgements}
M.P. acknowledges support from the Fondo Istituzionale per la Ricerca of Università della Svizzera italiana through the project \emph{Using Topological Data Analysis to Understand Microglia Shape Variability in Space and Time}.

\section*{Disclosure statement}
The authors report there are no competing interests to declare.

\section*{Data availability statement}
All simulated data can be regenerated from the seeded scripts in the supplementary code
archive. The cerebral artery trees and the original analysis files are distributed with the
supplement to \citet{bendich2016persistent}; the accompanying code prepares the rooted
Euclidean TMD pairs and reproduces the conditional-intensity analysis reported here.

\appendix

\section[Proofs for the estimation theory]{Proofs for \Cref{sec:theory}}\label{app:forward}

We use Bernstein's inequality in the following form. If
$X_1,\ldots,X_n$ are independent and centered,
$|X_i|\leq B$, and
$n^{-1}\sum_i\E[X_i^2]\leq\sigma^2$, then, for every $t>0$,
\begin{equation}\label{eq:bernstein-form}
\Pp\left(
\left|\frac1n\sum_{i=1}^nX_i\right|
>
\sqrt{\frac{2\sigma^2t}{n}}
+
\frac{Bt}{3n}
\right)
\leq 2e^{-t}.
\end{equation}
Every bounded subset of $\R^m$ has an $\eta$-net of cardinality at most
$C\eta^{-m}$ for $0<\eta\leq1$. Thus, if a pointwise Bernstein bound fails
with probability at most $2e^{-t}$, a union bound with
$t=\log(2|\mathcal N|/\alpha)$ makes it simultaneous on a finite net
$\mathcal N$ with probability at least $1-\alpha$; a Lipschitz estimate
then extends it to the full parameter domain. We use this construction in
later scalar arguments. For the denominator and random-measure numerator
needed by \Cref{thm:supnorm-rate}, we use the following literature result.

\begin{proposition}[Uniform empirical-process bound]
\label{prop:uniform-empirical-process}
Let $X_1,\ldots,X_n$ be independent with common law $P$, let $(T,d_T)$ be
a separable metric space, and let
$\mathcal G=\{g_t:t\in T\}$ be a class of measurable, $P$-centered
functions such that $t\mapsto g_t(x)$ is continuous for every $x$.
Suppose that, for constants $B,\sigma>0$,
\[
\sup_{g\in\mathcal G}\norm{g}_\infty\leq B,
\qquad
\sup_{g\in\mathcal G}Pg^2\leq\sigma^2,
\qquad 0<\sigma\leq B,
\]
and that there are $A\geq e$ and $v_0>0$ such that, for every probability
measure $Q$ and every $0<\eta<B$,
\begin{equation}\label{eq:uniform-entropy-condition}
N\bigl(\mathcal G,L^2(Q),\eta\bigr)
\leq
\left(\frac{AB}{\eta}\right)^{v_0}.
\end{equation}
Here $N(\mathcal G,L^2(Q),\eta)$ is the minimum number of
$L^2(Q)$-balls of radius $\eta$ needed to cover $\mathcal G$.
Let $P_n:=n^{-1}\sum_{i=1}^n\delta_{X_i}$.
For $\delta\in(0,1)$, put
\[
H_{\mathcal G,\delta}
:=v_0\log(AB/\sigma)+\log(1/\delta).
\]
Then there is a universal constant $C$ such that, with probability at
least $1-\delta$,
\begin{equation}\label{eq:uniform-empirical-process-bound}
\sup_{g\in\mathcal G}|(P_n-P)g|
\leq
C\left(
\sqrt{\frac{\sigma^2H_{\mathcal G,\delta}}{n}}
+\frac{B H_{\mathcal G,\delta}}{n}
\right).
\end{equation}
The class, the common law $P$, and the constants $A,B,\sigma$ may depend on
$n$.
\end{proposition}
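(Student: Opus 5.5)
This is a standard uniform-entropy concentration result, a Talagrand/Bousquet inequality combined with a bound on the expected supremum. I would prove it in two steps.

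\emph{Step 1: reduce to a countable class.} Since $T$ is separable and $t\mapsto g_t(x)$ is continuous for every $x$, the supremum over $\mathcal G$ equals the supremum over a countable dense subfamily $\{g_t:t\in T_0\}$. Hence $Z:=\sup_{g\in\mathcal G}|(P_n-P)g|$ is measurable, and concentration inequalities for countable classes apply.

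\emph{Step 2: concentration around the mean.} I would apply Bousquet's version of Talagrand's inequality to $nZ$, the supremum of the centred sums $\sum_i g(X_i)$ over the countable class. These summands are bounded by $B$ (replace $B$ by $2B$ if $g$ is not already centred in sup norm) and have weak variance at most $n\sigma^2$. With probability at least $1-\delta$,
\[
Z\le \E Z+\sqrt{\frac{2\log(1/\delta)\,(\sigma^2+2B\,\E Z)}{n}}+\frac{B\log(1/\delta)}{3n}.
\]
Using $\sqrt{2B\E Z\,t/n}\le \E Z+Bt/(2n)$, this gives $Z\le 2\E Z+C\bigl(\sqrt{\sigma^2\log(1/\delta)/n}+B\log(1/\delta)/n\bigr)$. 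Since $\log(1/\delta)\le H_{\mathcal G,\delta}$, these deviation terms are already of the stated form.

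\emph{Step 3: bound $\E Z$; this is the main obstacle.} I would invoke the Giné--Guillou / Einmahl--Mason type bound for VC-type classes (see \citet{gine2016mathematical}, Theorem 3.5.6 or equivalently Proposition 3.5.? on uniform entropy). It states that under \eqref{eq:uniform-entropy-condition} with $0<\sigma\le B$,
\[
\E Z\le C\Bigl(\sqrt{\frac{v_0\sigma^2\log(AB/\sigma)}{n}}+\frac{v_0B\log(AB/\sigma)}{n}\Bigr).
\]
The proof of this bound goes as follows. Symmetrize. Then apply Dudley's entropy integral conditionally on the sample in $L^2(P_n)$, where the uniform entropy condition applies because it holds for every $Q$. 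Integrating $\int_0^{\sqrt{\sup_g P_ng^2}}\sqrt{v_0\log(AB/\eta)}\,d\eta$ gives a bound of order $\sqrt{v_0\,\hat\sigma^2\log(AB/\hat\sigma)}$. Finally, control $\E\sup_g P_ng^2\le\sigma^2+C B\,\E\sup_g|(P_n-P)g|/\ldots$ by the contraction principle, which yields a quadratic inequality in $\E Z$. Solving that inequality, and using that $x\mapsto x\sqrt{\log(AB/x)}$ is increasing on $(0,B]$ since $A\ge e$, gives the displayed bound.

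Combining Steps 2 and 3 gives $Z\le C(\sqrt{\sigma^2H/n}+BH/n)$, because $v_0\log(AB/\sigma)\le H_{\mathcal G,\delta}$. All constants are universal. Nothing in the argument uses that $\mathcal G$, $P$, $A$, $B$, or $\sigma$ are fixed in $n$, so they may depend on $n$.
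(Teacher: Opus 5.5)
Your proposal is correct and follows essentially the same route as the paper, which combines a Talagrand-type concentration inequality around the expected supremum (Theorem D.6 in the supplement of \citet{wu2024estimation}) with a uniform-entropy bound on that expectation of order $\sqrt{v_0\sigma^2\log(AB/\sigma)/n}+v_0B\log(AB/\sigma)/n$ (their Theorem D.7); these are exactly your Steps 2 and 3. Your explicit reduction to a countable subclass, via separability, continuity, and $Pg_t=0$, makes precise the measurability role that the paper leaves implicit in those hypotheses.
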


\begin{proof}
This is the direct combination of Theorems D.6 and D.7 in the supplementary
material of \citet{wu2024estimation}. Their Theorem D.6 controls the empirical
supremum by its expectation, a $\sigma\sqrt{\log(1/\delta)/n}$ term, and a
$B\log(1/\delta)/n$ term. Under
\eqref{eq:uniform-entropy-condition}, their Theorem D.7 bounds the
expectation by the corresponding square-root and linear terms containing
$v_0\log(AB/\sigma)$. Combining the two bounds gives
\eqref{eq:uniform-empirical-process-bound}. The cited results are
nonasymptotic, so they apply at each fixed $n$ even when the class,
constants, and common law vary with $n$.
\end{proof}

\subsection{Auxiliary observations and proofs}\label{app:preliminary-proofs}

\begin{observation}[Logarithmic bandwidth consequences]\label{obs:bandwidth-logs}
Whenever $nh_Z^d/\log n\to\infty$, the bound
\begin{equation}\label{eq:covariate-log-consequence}
\log(1/h_Z)
\leq \frac1d\log\!\left(\frac{n}{\log n}\right)
=O(\!\log n)
\end{equation}
holds for all sufficiently large $n$. Under \Cref{ass:kernels}, the stronger
joint bound
\begin{equation}\label{eq:bandwidth-log-consequence}
\log(1/h_Z)+\log(1/h_U)
\leq d\log(1/h_Z)+2\log(1/h_U)
\leq \log\!\left(\frac{n}{\log n}\right)
\leq \log n
\end{equation}
also holds for all sufficiently large $n$.
\end{observation}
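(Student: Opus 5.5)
The statement to prove is \Cref{obs:bandwidth-logs}. The argument is elementary: I will turn the assumed growth of the effective sample size into an upper bound on $h_Z^{-d}$, or on $h_Z^{-d}h_U^{-2}$, and then take logarithms.

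For the covariate bound, the hypothesis $nh_Z^d/\log n\to\infty$ gives $nh_Z^d/\log n\geq 1$ for all sufficiently large $n$. This is equivalent to $h_Z^{-d}\leq n/\log n$. Taking logarithms and dividing by $d$ yields the first inequality in \eqref{eq:covariate-log-consequence}. The $O(\log n)$ statement follows because $\log(n/\log n)\leq\log n$ once $\log n\geq1$, that is, for $n\geq3$.

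For the joint bound, I would handle the chain in \eqref{eq:bandwidth-log-consequence} in three steps.

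\textbf{First inequality.} Since $h_Z,h_U\to0$ by \Cref{ass:kernels}, eventually $h_Z,h_U<1$, so $\log(1/h_Z)$ and $\log(1/h_U)$ are positive. Because $d\geq1$ and $2\geq1$, we get $\log(1/h_Z)+\log(1/h_U)\leq d\log(1/h_Z)+2\log(1/h_U)$. If $d=0$ is allowed (the unconditional case), the $h_Z$ terms should simply be dropped. I would note this in passing, since the statement implicitly takes $d\geq1$.

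\textbf{Second inequality.} The condition $nh_Z^dh_U^2/\log n\to\infty$ in \Cref{ass:kernels} gives $nh_Z^dh_U^2\geq\log n$ eventually. Hence $h_Z^{-d}h_U^{-2}\leq n/\log n$, and taking logarithms gives $d\log(1/h_Z)+2\log(1/h_U)\leq\log(n/\log n)$.

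\textbf{Third inequality.} This is the same observation as before: $\log\log n\geq0$ for $n\geq3$.

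There is no real obstacle here. The only points needing care are that the positivity of $\log(1/h)$, used in the first inequality of the chain, requires the bandwidths to be eventually below one, and that every "sufficiently large $n$" threshold is finite. Taking the maximum of these finitely many thresholds proves the observation.
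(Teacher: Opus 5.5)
Your proof is correct and follows the paper's argument: both turn the effective-sample-size conditions into $h_Z^d\geq(\log n)/n$ and $h_Z^dh_U^2\geq(\log n)/n$ eventually, take logarithms, and use $h_Z,h_U\leq1$ eventually for the first inequality of the chain. Your remarks on $d\geq1$ and on $\log\log n\geq0$ for $n\geq3$ only make explicit steps that the paper leaves implicit.
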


\begin{proof}[Proof of \Cref{obs:bandwidth-logs}]
The first effective-sample-size condition implies
$h_Z^d\geq(\log n)/n$ eventually, which gives
\eqref{eq:covariate-log-consequence}. Under \Cref{ass:kernels}, similarly,
$h_Z^dh_U^2\geq(\log n)/n$ eventually. Taking logarithms gives
\eqref{eq:bandwidth-log-consequence}, once $h_Z,h_U\leq1$.
\end{proof}

\begin{observation}[Bounds for the normalized kernels]\label{obs:kernel-bounds}
Suppose that $K_U$ satisfies the kernel conditions imposed on it in
\Cref{ass:kernels}. Then the persistence-domain normalizing factor satisfies,
for all sufficiently small $h$,
\begin{equation}\label{eq:u-kernel-lb}
0<\underline c_U\leq c_{U,h}(u)\leq\overline c_U<\infty
\qquad \forall u\in\OmegaL.
\end{equation}
If, in addition, \Cref{ass:geometry-Z} holds and $K_Z$ satisfies the kernel
conditions imposed on it in \Cref{ass:kernels}, then
\begin{equation}\label{eq:z-kernel-lb}
0<c_0\leq c_{Z,h}(z)\leq C_0<\infty
\qquad \forall z\in\calZ.
\end{equation}
Therefore, the normalized kernels in \eqref{eq:normalized-kernels} are
well defined under the corresponding conditions and satisfy
\eqref{eq:kernel-normalization}. Under the same respective conditions, they
also obey, for all sufficiently small bandwidths,
\begin{equation}\label{eq:kernel-norm-bounds}
\begin{aligned}
\sup_{z\in\calZ}
\|\kappa_{Z,h_Z}(z,\cdot)\|_{L^\infty(\calZ)}
&\leq C h_Z^{-d},
&
\sup_{z\in\calZ}
\int_\calZ\kappa_{Z,h_Z}(z,z')^2\dd z'
&\leq C h_Z^{-d},\\
\sup_{u\in\OmegaL}
\|\kappa_{U,h_U}(u,\cdot)\|_{L^\infty(\OmegaL)}
&\leq C h_U^{-2},
&
\sup_{u\in\OmegaL}
\int_{\OmegaL}\kappa_{U,h_U}(u,v)^2\dd v
&\leq C h_U^{-2}.
\end{aligned}
\end{equation}
They satisfy the first-argument Lipschitz bounds
\begin{align}
|\kappa_{Z,h}(z,z'')-\kappa_{Z,h}(z',z'')|
&\leq C h^{-d-1}\norm{z-z'},\label{eq:kappa-z-lipschitz}\\
|\kappa_{U,h}(u,v)-\kappa_{U,h}(u',v)|
&\leq C h^{-3}\norm{u-u'}.\label{eq:kappa-u-lipschitz}
\end{align}
If the covariate-domain conditions just stated and \Cref{ass:design} hold,
then
\begin{equation}\label{eq:covariate-kernel-second-moment}
\sup_{z\in\calZ}
\int_\calZ\kappa_{Z,h_Z}(z,z')^2p_Z(z')\dd z'
\leq C h_Z^{-d}.
\end{equation}
\end{observation}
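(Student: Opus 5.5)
We establish the bounds for the persistence kernel first. This part is purely geometric, because $\OmegaL$ is a fixed right triangle. For the upper bound, $c_{U,h}(u)\leq\int_{\R^2}K_U=1$. For the lower bound, $c_{U,h}(u)\geq k_U h^{-2}\operatorname{Leb}_2\{\OmegaL\cap B(u,\rho_{K,U}h)\}$, since $K_U\geq k_U$ on $B(0,\rho_{K,U})$. A closed triangle has a uniform interior cone property with opening angle $\pi/4$, the smallest angle of $\OmegaL$. Hence, for $\rho_{K,U}h\leq L$, the intersection contains a circular sector of radius $\rho_{K,U}h$ and angle $\pi/4$. This gives $c_{U,h}(u)\geq k_U\rho_{K,U}^2\pi/8=:\underline c_U$.

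The covariate bound \eqref{eq:z-kernel-lb} follows in the same way. We use $c_{Z,h}(z)\leq1$, and for the lower bound we use \eqref{eq:covariate-volume-lower} with $r=\rho_{K,Z}h\leq r_0$, which gives $c_{Z,h}(z)\geq k_Za_0\rho_{K,Z}^d=:c_0$. Since both normalizers are finite and strictly positive, the normalized kernels are well defined, and \eqref{eq:kernel-normalization} holds by the definition of $c_{\cdot,h}$.

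The sup-norm bounds in \eqref{eq:kernel-norm-bounds} follow directly from $\kappa\leq\|K\|_\infty h^{-m}/\underline c$, with $m=d$ or $m=2$. For the $L^2$ bounds, we bound one factor by this sup-norm and integrate the other factor to one, using \eqref{eq:kernel-normalization}. The weighted bound \eqref{eq:covariate-kernel-second-moment} then follows because $p_Z\leq C_Z$.

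The Lipschitz bounds are the only step needing care. The reason is that the first argument enters both the numerator $h^{-m}K((v-u)/h)$ and the normalizer $c_{\cdot,h}(u)$. We write
\[
\kappa(u,v)-\kappa(u',v)=\frac{h^{-m}\{K((v-u)/h)-K((v-u')/h)\}}{c(u)}+h^{-m}K((v-u')/h)\Bigl\{\frac1{c(u)}-\frac1{c(u')}\Bigr\}.
\]
The first term is at most $\Lip(K)h^{-m-1}\|u-u'\|/\underline c$.

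For the second term, we need $|c(u)-c(u')|\leq Ch^{-1}\|u-u'\|$. This holds because
\[
|c(u)-c(u')|\leq\int_{\R^m}h^{-m}\bigl|K((v-u)/h)-K((v-u')/h)\bigr|\,\dd v .
\]
The integrand is bounded by $\Lip(K)h^{-1}\|u-u'\|h^{-m}$ and is supported on a set of volume $O(h^m)$, provided $\|u-u'\|\leq h$. When $\|u-u'\|>h$, the claim is trivial from $|c|\leq1$. Combining this with $c\geq\underline c$ and $h^{-m}K\leq Ch^{-m}$ gives the bound $Ch^{-m-1}\|u-u'\|$, which is \eqref{eq:kappa-z-lipschitz} and \eqref{eq:kappa-u-lipschitz}.

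The main obstacle is the uniform lower bound on $c_{U,h}$ at the corners of the triangle, in particular at the apex $(0,0)$ and at $(L,L)$, where the diagonal meets the edges. The explicit sector argument settles this.
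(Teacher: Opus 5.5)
Your proof is correct and follows essentially the same route as the paper. Both use kernel positivity near the origin plus a uniform volume lower bound for the normalizers, derive the sup and $L^2$ bounds from $c_h\ge\underline c$, and obtain the Lipschitz estimate by splitting into numerator and normalizer parts with $|c_h(u)-c_h(u')|\le Ch^{-1}\|u-u'\|$. Your explicit sector argument for $\OmegaL$ and your bound $\int\kappa^2\le\|\kappa\|_\infty\int\kappa$ are just more hands-on versions of the paper's appeal to the Lipschitz-domain property and its change of variables.

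The only slip is cosmetic: the threshold $\rho_{K,U}h\le L$ is too generous. No $\pi/4$-sector of radius close to $L$ fits inside $\OmegaL$ at the vertex $(0,L)$ or at the point $(L/2,L/2)$. Any fixed smaller threshold works, for example one third of the minimal altitude $L/\sqrt2$, and that is all ``sufficiently small $h$'' requires.
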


\begin{proof}[Proof of \Cref{obs:kernel-bounds}]
We first prove the normalizer bounds. Under the additional covariate-domain
conditions stated in the observation, by
\eqref{eq:covariate-volume-lower}, the change of variables
$y=(z'-z)/h$, and the lower bound for $K_Z$ in \Cref{ass:kernels},
\[
\begin{aligned}
c_{Z,h}(z)
&=\int_{(\calZ-z)/h}K_Z(y)\dd y\\
&\geq
k_Zh^{-d}\operatorname{Leb}_d\{B(z,h\rho_{K,Z})\cap\calZ\}
\geq k_Za_0\rho_{K,Z}^d
\end{aligned}
\]
uniformly in $z$, provided $h\rho_{K,Z}\leq r_0$. The fixed triangular domain
$\OmegaL$ is the closure of a bounded Lipschitz domain and therefore satisfies
the analogous uniform volume bound
\[
\operatorname{Leb}_2\{B(u,r)\cap\OmegaL\}\geq a_Ur^2
\qquad
\forall u\in\OmegaL,\quad 0<r\leq r_U^0
\]
for some $a_U,r_U^0>0$. The same calculation, now using the lower bound for
$K_U$, gives $c_{U,h}(u)\geq k_Ua_U\rho_{K,U}^2$ uniformly in $u$ for all sufficiently
small $h$. The two upper bounds follow from nonnegativity and the unit-integral
conditions, since each normalizer is the integral of its kernel over a subset
of the corresponding ambient space. This proves
\Cref{eq:u-kernel-lb,eq:z-kernel-lb}; the normalization identities
then follow directly from the definitions.

For the norm bounds, consider either normalized kernel. Let $\mathcal X$ denote
its domain, let $m$ be the dimension of $\mathcal X$, let $K$ be the unscaled
kernel, and let $c_h(x)$ be the normalizing factor. The bounds just proved give
$c_h(x)\geq\underline c>0$ uniformly in $x\in\mathcal X$. Hence, directly from
the definition of $\kappa_h$,
\[
\|\kappa_h(x,\cdot)\|_{L^\infty(\mathcal X)}
\leq
\frac{h^{-m}}{c_h(x)}\|K\|_{L^\infty(\R^m)}
\leq
\underline c^{-1}\|K\|_{L^\infty(\R^m)}h^{-m}.
\]
For the squared $L^2$ norm, we similarly obtain
\[
\begin{aligned}
\int_{\mathcal X}\kappa_h(x,x')^2\dd x'
&=
\frac{h^{-2m}}{c_h(x)^2}
\int_{\mathcal X}
K\!\left(\frac{x'-x}{h}\right)^2\dd x'\\
&=
\frac{h^{-m}}{c_h(x)^2}
\int_{(\mathcal X-x)/h}K(y)^2\dd y\\
&\leq
\underline c^{-2}
\|K\|_{L^2(\R^m)}^2h^{-m},
\end{aligned}
\]
where the second equality uses the change of variables
$y=(x'-x)/h$, so that $\dd x'=h^m\dd y$. Taking
$(\mathcal X,m,K)=(\calZ,d,K_Z)$ and
$(\mathcal X,m,K)=(\OmegaL,2,K_U)$ proves
\eqref{eq:kernel-norm-bounds}.

It remains to establish the Lipschitz bounds. Put
$q_h(x,x'):=h^{-m}K((x'-x)/h)$. Lipschitz continuity of $K$ gives
\[
|q_h(x,x'')-q_h(x',x'')|
\leq \Lip(K)h^{-m-1}\norm{x-x'}.
\]
The difference is supported in the union of two balls of radius $h$.
Integrating the preceding bound over that union therefore gives
$|c_h(x)-c_h(x')|\leq Ch^{-1}\norm{x-x'}$. Since
$c_h\geq\underline c$,
\[
\left|\frac1{c_h(x)}-\frac1{c_h(x')}\right|
\leq Ch^{-1}\norm{x-x'}.
\]
Combining these bounds with $0\leq q_h\leq\|K\|_\infty h^{-m}$ yields
\[
|\kappa_h(x,x'')-\kappa_h(x',x'')|
\leq Ch^{-m-1}\norm{x-x'}.
\]
Taking $m=d$ and $m=2$ proves \eqref{eq:kappa-z-lipschitz} and
\eqref{eq:kappa-u-lipschitz}, respectively.

Finally, under \Cref{ass:design}, the design-density upper bound and the
covariate $L^2$ bound give
\[
\int_\calZ\kappa_{Z,h_Z}(z,z')^2p_Z(z')\dd z'
\leq C_Z\int_\calZ\kappa_{Z,h_Z}(z,z')^2\dd z'
\leq Ch_Z^{-d},
\]
which proves \eqref{eq:covariate-kernel-second-moment}.
\end{proof}

\subsection{Auxiliary lemmas}\label{app:lemmas}

\setcounter{lemma}{0}
\renewcommand{\thelemma}{A\arabic{lemma}}
\renewcommand{\theHlemma}{A\arabic{lemma}}

\begin{lemma}[Covariate denominator and local counts]\label{lem:denominator-concentration}
Fix $\delta\in(0,1)$ and assume \Cref{ass:geometry-Z,ass:design}. Suppose that
$K_Z$ satisfies the kernel conditions imposed on it in \Cref{ass:kernels}, and that
$h_Z\to0$ and
$nh_Z^d/\log n\to\infty$. Then, for all sufficiently large $n$, there is an
event
$\mathcal E_Z=\mathcal E_Z(h_Z,\delta)$ with probability at least $1-\delta$ such that
\[
\sup_{z\in\calZ}|\widehat D_n(z)-D_h(z)|\leq C r_{Z,n}(h_Z,\delta)
\qquad\text{and}\qquad
\sup_{z\in\calZ}\frac1n\sum_{i=1}^n\ind{\norm{Z_i-z}\leq 2h_Z}\leq C h_Z^d.
\]
Moreover, $\inf_{z\in\calZ}D_h(z)\geq c_Z$. Therefore, if $C r_{Z,n}(h_Z,\delta)\leq c_Z/2$, then on $\mathcal E_Z$,
\[
\inf_{z\in\calZ}\widehat D_n(z)\geq \frac{c_Z}{2}.
\]
\end{lemma}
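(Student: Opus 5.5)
We prove the three assertions in turn: the deterministic lower bound on $D_h$, uniform concentration of $\widehat D_n$, and the uniform local-count bound. The final display then follows by the triangle inequality.

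\emph{Lower bound on $D_h$.} First,
\[
D_h(z)=\int_\calZ\kappa_{Z,h_Z}(z,z')p_Z(z')\dd z'.
\]
\Cref{ass:design} gives $p_Z\geq c_Z$, and the normalization \eqref{eq:kernel-normalization} from \Cref{obs:kernel-bounds} gives $\int_\calZ\kappa_{Z,h_Z}(z,z')\dd z'=1$. Hence $D_h(z)\geq c_Z$ for every $z$, once $h_Z$ is small enough for \Cref{obs:kernel-bounds} to apply.

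\emph{Concentration of the denominator.} Apply \Cref{prop:uniform-empirical-process} to the centered class
\[
g_z(x)=\kappa_{Z,h_Z}(z,x)-D_h(z),\qquad z\in\calZ.
\]
The required inputs come from \Cref{obs:kernel-bounds}.
\begin{itemize}[label={}]
\item The envelope is $B\leq Ch_Z^{-d}$, by \eqref{eq:kernel-norm-bounds}.
\item The variance is $\sigma^2\leq Ch_Z^{-d}$, by \eqref{eq:covariate-kernel-second-moment}. We have $\sigma\leq B$ after enlarging $B$ if necessary.
\item For entropy, the first-argument Lipschitz bound \eqref{eq:kappa-z-lipschitz} gives $\norm{g_z-g_{z'}}_\infty\leq Ch_Z^{-d-1}\norm{z-z'}$. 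Since $D_h$ inherits the same Lipschitz constant, a $\eta h_Z/C$-net of the compact set $\calZ$ gives a sup-norm, hence $L^2(Q)$-uniform, $\eta$-cover. The covering number is therefore at most $(C h_Z^{-d-1}/\eta)^d=(AB/\eta)^{d}$ with $A\asymp h_Z^{-1}$.
\end{itemize}
Continuity of $z\mapsto g_z(x)$ follows from the same bound. With these inputs, $H_{\mathcal G,\delta}\lesssim\log(C/h_Z)+\log(1/\delta)$, which is at most a constant times $\log(Cn/(\delta h_Z^d))$. The bound \eqref{eq:uniform-empirical-process-bound} then reads
\[
C\Bigl(\sqrt{h_Z^{-d}H/n}+h_Z^{-d}H/n\Bigr)\leq C r_{Z,n}(h_Z,\delta),
\]
which holds on an event of probability at least $1-\delta/2$.

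\emph{Local counts.} Indicators of balls are not Lipschitz, so we dominate them by a Lipschitz bump. Let $\phi(x)=\min\{1,(3-\norm{x})_+\}$, so that $\phi=1$ on $\overline B(0,2)$, $\phi$ is supported in $B(0,3)$, and $\phi$ is Lipschitz. Then
\[
\frac1n\sum_i\ind{\norm{Z_i-z}\leq2h_Z}\leq\frac1n\sum_i\phi\bigl((Z_i-z)/h_Z\bigr).
\]
The expectation of the right-hand side is at most $C_Z\operatorname{Leb}_d B(0,3h_Z)\leq Ch_Z^d$. Rescaling by $h_Z^{-d}$ puts $h_Z^{-d}\phi((\cdot-z)/h_Z)$ in exactly the situation of the previous step: envelope $h_Z^{-d}$, variance $Ch_Z^{-d}$, and Lipschitz constant $h_Z^{-d-1}$. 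So on a second event of probability at least $1-\delta/2$, the rescaled deviation is at most $Cr_{Z,n}(h_Z,\delta)$. Because $nh_Z^d/\log n\to\infty$ and $\delta$ is fixed, $r_{Z,n}(h_Z,\delta)\to0$, so this deviation is $o(1)$. Multiplying back by $h_Z^d$ gives the $Ch_Z^d$ bound uniformly in $z$. Let $\mathcal E_Z$ be the intersection of the two events; it has probability at least $1-\delta$.

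\emph{Conclusion.} On $\mathcal E_Z$, whenever $Cr_{Z,n}(h_Z,\delta)\leq c_Z/2$,
\[
\widehat D_n(z)\geq D_h(z)-Cr_{Z,n}(h_Z,\delta)\geq c_Z-\frac{c_Z}{2}=\frac{c_Z}{2}
\]
for every $z\in\calZ$.

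The only delicate step is the entropy bookkeeping. The factor $A\sim h_Z^{-1}$ must produce only a $\log(1/h_Z)$ contribution. This holds, and it is absorbed into $\log(Cn/(\delta h_Z^d))$ by \Cref{obs:bandwidth-logs}.
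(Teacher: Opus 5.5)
Your proof is correct. The lower bound on $D_h$ and the denominator concentration coincide with the paper's argument: the same centered class, envelope and variance $Ch_Z^{-d}$, entropy constant $A_Z\asymp h_Z^{-1}$, and absorption of $\log(1/h_Z)$ into $\log(Cn/(\delta h_Z^d))$.

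The local-count step takes a genuinely different route. You run a second empirical-process bound on the Lipschitz majorant $h_Z^{-d}\phi((\cdot-z)/h_Z)$ and intersect two events after splitting $\delta$. The paper instead reads the count bound off the denominator event itself:
\begin{itemize}
\item Positivity $K_Z\geq k_Z$ on $B(0,\rho_*)$, with $\rho_*=\rho_{K,Z}\wedge1$, together with $c_{Z,h_Z}\leq1$, gives
\[
\widehat D_n(x)\geq k_Zh_Z^{-d}\frac1n\sum_i\ind{\norm{Z_i-x}\leq\rho_*h_Z}.
\]
\item On the same event, $\sup_{x\in\calZ}\widehat D_n(x)\leq C_Z+Cr_{Z,n}\leq C$, so counts in balls of radius $\rho_*h_Z$ are at most $Ch_Z^d$.
\item Covering $B(z,2h_Z)\cap\calZ$ by boundedly many balls of radius $\rho_*h_Z$ centred in $\calZ$ then yields the radius-$2h_Z$ bound.
\end{itemize}

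The paper's route needs only one concentration event, so the entire $\delta$ budget goes to the denominator. It does require the kernel positivity near the origin and a covering step with centres kept inside $\calZ$. Your route needs neither: it uses only the design upper bound $C_Z$ and a Lipschitz bump. Its cost is a second concentration and a union bound, which is harmless because $\delta/2$ only changes the constant inside the logarithm of $r_{Z,n}$.
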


\begin{proof}
We first establish the deterministic lower bound. By \Cref{ass:design},
$p_Z(z')\geq c_Z$ on $\calZ$, while $\kappa_{Z,h_Z}$ is nonnegative and
integrates to one in its second argument by \eqref{eq:kernel-normalization}.
Hence, for every
$z\in\calZ$,
\begin{align*}
D_h(z)
&=\int_\calZ\kappa_{Z,h_Z}(z,z')p_Z(z')\dd z'\\
&\geq c_Z\int_\calZ\kappa_{Z,h_Z}(z,z')\dd z'
=c_Z.
\end{align*}

We next apply \Cref{prop:uniform-empirical-process}.  For
$x\in\calZ$, define
\[
f_z(x):=\kappa_{Z,h_Z}(z,x),
\qquad
g_z(x):=f_z(x)-Pf_z,
\qquad z\in\calZ.
\]
The index set $\calZ$ is compact and hence separable, and
$z\mapsto g_z(x)$ is continuous for every $x$ by the Lipschitz property of
the normalized kernel.  The pointwise and second-moment bounds in
\Cref{obs:kernel-bounds} give
\begin{equation}\label{eq:denominator-class-bounds}
\sup_{z\in\calZ}\norm{g_z}_\infty
\leq B_Z:=Ch_Z^{-d},
\qquad
\sup_{z\in\calZ}Pg_z^2
\leq\sigma_Z^2:=Ch_Z^{-d}.
\end{equation}
Indeed, $Pf_z=D_h(z)\leq C_Z$ by the design upper bound and kernel
normalization, and
$Pg_z^2\leq Pf_z^2\leq Ch_Z^{-d}$ by
\eqref{eq:covariate-kernel-second-moment}.

It remains to verify the entropy condition.  The first-argument
Lipschitz bound \eqref{eq:kappa-z-lipschitz} and
$|P(f_z-f_{z'})|\leq P|f_z-f_{z'}|$ imply, uniformly in $x$,
\[
|g_z(x)-g_{z'}(x)|
\leq Ch_Z^{-d-1}\norm{z-z'}
\leq CB_Z\frac{\norm{z-z'}}{h_Z}.
\]
Consequently, a Euclidean net of $\calZ$ with mesh proportional to
$\eta h_Z/B_Z$ induces an $L^2(Q)$-cover of
$\mathcal G_Z:=\{g_z:z\in\calZ\}$ at radius $\eta$, for every probability
measure $Q$.  Since $\calZ$ is bounded, its cardinality satisfies
\begin{equation}\label{eq:denominator-entropy}
N(\mathcal G_Z,L^2(Q),\eta)
\leq
C\left(\frac{B_Z}{\eta h_Z}\right)^d
\leq
\left(\frac{A_ZB_Z}{\eta}\right)^d,
\qquad
A_Z:=C h_Z^{-1},
\end{equation}
after increasing $C$ so that $A_Z\geq e$.

Apply \Cref{prop:uniform-empirical-process} with
$v_0=d$, $B=B_Z$, and $\sigma=\sigma_Z$; for sufficiently small $h_Z$,
$\sigma_Z\leq B_Z$ after increasing the envelope constant. Since
\[
\log\!\left(\frac{A_ZB_Z}{\sigma_Z}\right)
\leq C\{1+\log(1/h_Z)\},
\]
the logarithmic factor in \eqref{eq:uniform-empirical-process-bound} is
bounded, for all sufficiently large $n$, by
$C\log\{Cn/(\delta h_Z^d)\}$.  Recalling that
$(P_n-P)f_z=\widehat D_n(z)-D_h(z)$, we conclude that, on an event of
probability at least $1-\delta$,
\begin{equation}\label{eq:denominator-empirical-process-conclusion}
\sup_{z\in\calZ}|\widehat D_n(z)-D_h(z)|
\leq C r_{Z,n}(h_Z,\delta).
\end{equation}

It remains to prove the local-count bound, which follows from the denominator
bound on the same event. Put $\rho_*:=\rho_{K,Z}\wedge1$. By the kernel
lower bound in \Cref{ass:kernels}, $K_Z(y)\geq k_Z$ whenever
$\norm{y}\leq\rho_*$. Moreover, $c_{Z,h_Z}(x)\leq1$ because $K_Z$ is
nonnegative and integrates to one. Therefore, for every $x\in\calZ$,
\begin{align}
\widehat D_n(x)
&=\frac1n\sum_{i=1}^n\kappa_{Z,h_Z}(x,Z_i)\nonumber\\
&\geq \frac{k_Zh_Z^{-d}}{n}
\sum_{i=1}^n\ind{\norm{Z_i-x}\leq\rho_*h_Z}.
\label{eq:local-count-from-denominator}
\end{align}
On the denominator-concentration event already constructed,
\[
\sup_{x\in\calZ}\widehat D_n(x)
\leq \sup_xD_h(x)+\sup_x|\widehat D_n(x)-D_h(x)|
\leq C_Z+C r_{Z,n}(h_Z,\delta)
\leq C
\]
for all sufficiently large $n$. Hence \eqref{eq:local-count-from-denominator}
gives
\begin{equation}\label{eq:small-ball-local-count}
\sup_{x\in\calZ}\frac1n\sum_{i=1}^n
\ind{\norm{Z_i-x}\leq\rho_*h_Z}
\leq C h_Z^d.
\end{equation}
For each $z\in\calZ$, rescale $B(z,2h_Z)\cap\calZ$ by $h_Z^{-1}$
and apply the elementary Euclidean covering bound recorded at the beginning of
the appendix. This gives a cover by at most $C=C(d,\rho_*)$ balls of
radius $\rho_*h_Z$ whose centres belong to $\calZ$. Summing \eqref{eq:small-ball-local-count} over such a cover yields
\[
\sup_{z\in\calZ}\frac1n\sum_{i=1}^n
\ind{\norm{Z_i-z}\leq2h_Z}
\leq C h_Z^d.
\]
Thus the denominator event itself may be taken as $\mathcal E_Z$. Finally, if
$C r_{Z,n}(h_Z,\delta)\leq c_Z/2$, then on $\mathcal E_Z$,
\[
\widehat D_n(z)
\geq D_h(z)-|\widehat D_n(z)-D_h(z)|
\geq c_Z-Cr_{Z,n}(h_Z,\delta)
\geq c_Z/2
\]
uniformly in $z$, as claimed.
\end{proof}

\begin{lemma}[Local effective sample size and Lipschitz weights]\label{lem:local-effective-sample}
On the event $\mathcal E_Z$ of \Cref{lem:denominator-concentration}, assume $C r_{Z,n}(h_Z,\delta)\leq c_Z/2$, and define
\[
a_i(z):=\frac{\kappa_{Z,h_Z}(z,Z_i)}{\sum_{j=1}^n\kappa_{Z,h_Z}(z,Z_j)}.
\]
Then
\[
\sup_{z\in\calZ}\sum_{i=1}^n a_i(z)^2\leq \frac{C}{n h_Z^d},
\qquad\text{equivalently}\qquad
\inf_{z\in\calZ}\Bigl(\sum_{i=1}^n a_i(z)^2\Bigr)^{-1}\geq cnh_Z^d.
\]
Moreover, whenever $\norm{z-z'}\leq h_Z$,
\[
\sum_{i=1}^n |a_i(z)-a_i(z')|\leq C\frac{\norm{z-z'}}{h_Z}.
\]
\end{lemma}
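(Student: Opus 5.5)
Both claims follow from writing $a_i(z)=\kappa_{Z,h_Z}(z,Z_i)/\{n\widehat D_n(z)\}$ and using two facts available on $\mathcal E_Z$ under $Cr_{Z,n}(h_Z,\delta)\le c_Z/2$: the uniform lower bound $\widehat D_n(z)\ge c_Z/2$, and the local-count bound $\sup_z n^{-1}\sum_i\ind{\norm{Z_i-z}\le 2h_Z}\le Ch_Z^d$, both from \Cref{lem:denominator-concentration}.

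\emph{Squared weights.} We have
\[
\sum_i a_i(z)^2=\frac{n^{-2}\sum_i\kappa_{Z,h_Z}(z,Z_i)^2}{\widehat D_n(z)^2}.
\]
By \eqref{eq:kernel-norm-bounds}, $\kappa_{Z,h_Z}(z,Z_i)\le Ch_Z^{-d}$, and the kernel vanishes unless $\norm{Z_i-z}\le h_Z$. Hence
\[
n^{-2}\sum_i\kappa_{Z,h_Z}(z,Z_i)^2\le n^{-1}Ch_Z^{-2d}\cdot n^{-1}\#\{i:\norm{Z_i-z}\le h_Z\}\le Cn^{-1}h_Z^{-d}
\]
by the local-count bound. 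Dividing by $\widehat D_n(z)^2\ge c_Z^2/4$ gives $\sup_z\sum_ia_i(z)^2\le C/(nh_Z^d)$. The reciprocal form is immediate.

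\emph{Weight differences.} Write $k_i=\kappa_{Z,h_Z}(z,Z_i)$, $k_i'=\kappa_{Z,h_Z}(z',Z_i)$, $S=\sum_jk_j=n\widehat D_n(z)$ and $S'=n\widehat D_n(z')$. Decompose
\[
a_i(z)-a_i(z')=\frac{k_i-k_i'}{S}+k_i'\Bigl(\frac1S-\frac1{S'}\Bigr).
\]
Summing the second term over $i$ gives $|S'-S|/S$, so
\[
\sum_i|a_i(z)-a_i(z')|\le\frac{2\sum_i|k_i-k_i'|}{S}.
\]
By \eqref{eq:kappa-z-lipschitz}, $|k_i-k_i'|\le Ch_Z^{-d-1}\norm{z-z'}$. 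Moreover $k_i-k_i'$ is nonzero only if $Z_i\in B(z,h_Z)\cup B(z',h_Z)$. Since $\norm{z-z'}\le h_Z$, this set is contained in $B(z,2h_Z)$, which is where the hypothesis $\norm{z-z'}\le h_Z$ is used. The local-count bound therefore gives
\[
\sum_i|k_i-k_i'|\le Ch_Z^{-d-1}\norm{z-z'}\cdot Cnh_Z^d.
\]
Dividing by $S\ge nc_Z/2$ yields $C\norm{z-z'}/h_Z$.

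The only delicate point is bounding the number of contributing indices by $nh_Z^d$ rather than by $n$. This is exactly what the $2h_Z$ local-count bound in \Cref{lem:denominator-concentration} supplies, and it is why the radius $2h_Z$ was built into that lemma. Both bounds hold uniformly in $z$ (and in $z'$ for the second) on the single event $\mathcal E_Z$.
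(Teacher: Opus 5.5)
Your proof is correct and follows the paper's argument; in particular, the weight-difference part uses the same decomposition, the same Lipschitz bound, and the same $2h_Z$ local-count step. The only variation is in the squared-weight bound: the paper does not use the local count there, but combines $a_i(z)\le C/(nh_Z^d)$ with $\sum_i a_i(z)^2\le\max_j a_j(z)\sum_i a_i(z)$, whereas you bound $\sum_i\kappa_{Z,h_Z}(z,Z_i)^2$ through the local count, and both arguments are valid on $\mathcal E_Z$.
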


\begin{proof}
Fix $z\in\calZ$ and abbreviate
\[
k_i(z):=\kappa_{Z,h_Z}(z,Z_i),
\qquad
S(z):=\sum_{j=1}^n k_j(z)=n\widehat D_n(z).
\]
On $\mathcal E_Z$, \Cref{lem:denominator-concentration} gives
$S(z)\geq nc_Z/2$ uniformly in $z$. By \eqref{eq:kernel-norm-bounds},
$k_i(z)\leq C h_Z^{-d}$, and hence
\[
0\leq a_i(z)=\frac{k_i(z)}{S(z)}
\leq\frac{C}{nh_Z^d}.
\]
The $a_i(z)$ are nonnegative and sum to one. It follows term by term that
$a_i(z)^2\leq(\max_j a_j(z))a_i(z)$, and hence
\[
\sum_{i=1}^n a_i(z)^2
\leq \max_j a_j(z)\sum_{i=1}^na_i(z)
\leq\frac{C}{nh_Z^d}.
\]
Taking reciprocals proves the equivalent effective-sample-size statement.

We now compare the weight vectors at $z$ and $z'$. Directly adding and
subtracting $k_i(z')/S(z)$ gives
\begin{align*}
|a_i(z)-a_i(z')|
&\leq \frac{|k_i(z)-k_i(z')|}{S(z)}
 +k_i(z')\left|\frac1{S(z)}-\frac1{S(z')}\right|.
\end{align*}
After summing in $i$ and using
$|S(z)-S(z')|\leq\sum_i|k_i(z)-k_i(z')|$, we obtain
\begin{equation}\label{eq:weights-l1-intermediate}
\sum_{i=1}^n|a_i(z)-a_i(z')|
\leq
\frac{2}{S(z)\wedge S(z')}
\sum_{i=1}^n|k_i(z)-k_i(z')|.
\end{equation}

Suppose $\norm{z-z'}\leq h_Z$. Because $K_Z$ is supported in the unit ball,
$k_i(z)$ or $k_i(z')$ can be nonzero only if $Z_i$ is within $h_Z$ of $z$ or
$z'$. This union is contained in the ball of radius $2h_Z$ about $z$. The
local-count part of $\mathcal E_Z$ therefore shows that at most $Cnh_Z^d$
indices contribute to the sum in \eqref{eq:weights-l1-intermediate}. For every
such index, \eqref{eq:kappa-z-lipschitz} gives
\[
|k_i(z)-k_i(z')|
\leq C h_Z^{-d-1}\norm{z-z'}.
\]
Therefore,
\[
\sum_{i=1}^n|k_i(z)-k_i(z')|
\leq Cnh_Z^d h_Z^{-d-1}\norm{z-z'}
=Cn\frac{\norm{z-z'}}{h_Z}.
\]
Substitution into \eqref{eq:weights-l1-intermediate}, followed by
$S(z)\wedge S(z')\geq nc_Z/2$, proves
\[
\sum_{i=1}^n|a_i(z)-a_i(z')|
\leq C\frac{\norm{z-z'}}{h_Z}.
\]
\end{proof}

\begin{lemma}[Bias of the expected double-kernel ratio]
\label{lem:double-kernel-bias}
Let $(Z,\nu)$ be a random pair, where $\nu$ is a finite nonnegative
random measure on $\OmegaL$. Suppose that its conditional mean measure
\[
\Lambda_z^\nu(A)
:=
\E\left[\nu(A)\mid Z=z\right],
\qquad
A\in\calB(\OmegaL),
\]
admits a bounded uniformly continuous density
$\lambda_\nu:\calZ\times\OmegaL\to[0,\infty)$.

Define
\[
Y^\nu_{h_U}(u)
:=
\int_{\OmegaL}
\kappa_{U,h_U}(u,v)\dd\nu(v),
\]
\[
N_h^\nu(z,u)
:=
\E\left[
\kappa_{Z,h_Z}(z,Z)Y^\nu_{h_U}(u)
\right],
\]
and
\[
D_h(z)
:=
\E\left[
\kappa_{Z,h_Z}(z,Z)
\right].
\]
Under
\Cref{ass:kernels,ass:geometry-Z,ass:design},
\[
\sup_{z\in\calZ,u\in\OmegaL}
\left|
\frac{N_h^\nu(z,u)}{D_h(z)}
-
\lambda_\nu(z,u)
\right|
\leq
\omega_{\lambda_\nu}(h_Z+h_U).
\]
Moreover,
\[
\inf_{z\in\calZ}D_h(z)\geq c_Z,
\qquad
\sup_{z,u}
\left|
\frac{N_h^\nu(z,u)}{D_h(z)}
\right|
\leq
\norm{\lambda_\nu}_\infty,
\]
and
\[
\sup_{z,u}
\left|
N_h^\nu(z,u)
\right|
\leq
C_Z\norm{\lambda_\nu}_\infty.
\]
\end{lemma}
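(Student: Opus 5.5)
The plan is to write the expected ratio $N_h^\nu(z,u)/D_h(z)$ as an average of $\lambda_\nu$ against a probability kernel supported within distance $h_Z+h_U$ of $(z,u)$. The bias bound then follows from the definition of the modulus of continuity.

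First, condition on $Z$. By Fubini/Tonelli, which is valid because everything is nonnegative, and by the definition of the conditional mean measure,
\[
\E\bigl[Y^\nu_{h_U}(u)\mid Z=z'\bigr]=\int_{\OmegaL}\kappa_{U,h_U}(u,v)\,\dd\Lambda^\nu_{z'}(v)=\int_{\OmegaL}\kappa_{U,h_U}(u,v)\lambda_\nu(z',v)\dd v .
\]
To justify this, approximate $\kappa_{U,h_U}(u,\cdot)$ by simple functions and use monotone convergence. Integrating against the design density then gives
\[
N_h^\nu(z,u)=\int_\calZ\int_{\OmegaL}\kappa_{Z,h_Z}(z,z')\kappa_{U,h_U}(u,v)\lambda_\nu(z',v)\,p_Z(z')\dd v\dd z',
\qquad
D_h(z)=\int_\calZ\kappa_{Z,h_Z}(z,z')p_Z(z')\dd z'.
\]
The lower bound $D_h(z)\ge c_Z$ is proved exactly as in \Cref{lem:denominator-concentration}: use $p_Z\ge c_Z$ together with the normalization \eqref{eq:kernel-normalization} from \Cref{obs:kernel-bounds}, valid for small bandwidths. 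In the same way, $D_h(z)\le C_Z$.

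Next, define the weight
\[
W(z',v):=\frac{\kappa_{Z,h_Z}(z,z')p_Z(z')\,\kappa_{U,h_U}(u,v)}{D_h(z)}.
\]
It is nonnegative, and by \eqref{eq:kernel-normalization} in $v$ and the definition of $D_h$ in $z'$ it integrates to one over $\calZ\times\OmegaL$. This is exactly where the boundary normalization $c_{U,h}$ matters. Without it, the $v$-integral would fall short of one near $\partial\OmegaL$ and would produce an $O(1)$ attenuation bias.

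Because $\operatorname{supp}K_Z\subseteq\overline B_d(0,1)$ and $\operatorname{supp}K_U\subseteq\overline B_2(0,1)$, the weight $W$ vanishes unless $\norm{z'-z}\le h_Z$ and $\norm{v-u}\le h_U$. On that set, $\norm{z-z'}+\norm{u-v}\le h_Z+h_U$. Hence
\[
\Bigl|\tfrac{N_h^\nu}{D_h}(z,u)-\lambda_\nu(z,u)\Bigr|=\Bigl|\int W\,(\lambda_\nu(z',v)-\lambda_\nu(z,u))\Bigr|\le\omega_{\lambda_\nu}(h_Z+h_U),
\]
uniformly in $(z,u)$.

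The remaining bounds come from the same representation. Since the ratio is a probability average of $\lambda_\nu$, it is bounded by $\norm{\lambda_\nu}_\infty$. Combining this with $D_h\le C_Z$ gives $|N_h^\nu|\le C_Z\norm{\lambda_\nu}_\infty$.

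The only step needing care is the Fubini/conditional-expectation interchange for the random measure. Finiteness of $\nu$ alone does not make $\E\,\nu(\OmegaL)$ finite, but nonnegativity lets Tonelli apply anyway. The resulting integrals are finite because $\lambda_\nu$ is bounded.
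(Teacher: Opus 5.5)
Your proposal is correct and follows the paper's proof essentially step by step. Like the paper, you condition on $Z$ and use Tonelli to write $N_h^\nu/D_h$ as an integral of $\lambda_\nu$ against a probability measure supported where $\norm{z'-z}\le h_Z$ and $\norm{v-u}\le h_U$ (your $W$, the paper's $Q_{z,u}$), and you read off the bias, sup-norm, and $C_Z$ bounds from that representation together with $c_Z\le D_h\le C_Z$, adding only a more explicit justification of the conditional-expectation interchange.
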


\begin{proof}
By the design lower bound and the normalization of the covariate
kernel,
\[
D_h(z)
=
\int_\calZ
\kappa_{Z,h_Z}(z,z')p_Z(z')\dd z'
\geq
c_Z
\int_\calZ
\kappa_{Z,h_Z}(z,z')\dd z'
=
c_Z.
\]
Similarly,
\[
D_h(z)\leq C_Z.
\]

Conditionally on $Z=z'$, the definition of the conditional mean
measure gives
\[
\E\left[
\left.
Y^\nu_{h_U}(u)
\right|Z=z'
\right]
=
\int_{\OmegaL}
\kappa_{U,h_U}(u,v)
\lambda_\nu(z',v)\dd v.
\]
Therefore, by conditioning on $Z$ and applying Tonelli's theorem,
\[
N_h^\nu(z,u)
=
\int_\calZ\int_{\OmegaL}
\kappa_{Z,h_Z}(z,z')p_Z(z')
\kappa_{U,h_U}(u,v)
\lambda_\nu(z',v)
\dd v\dd z'.
\]

Define
\[
Q_{z,u}(\dd z',\dd v)
:=
\frac{
\kappa_{Z,h_Z}(z,z')p_Z(z')
\kappa_{U,h_U}(u,v)
}{
D_h(z)
}
\dd z'\dd v.
\]
The normalization identities for the two kernels imply
\[
\begin{aligned}
\int_\calZ\int_{\OmegaL}
Q_{z,u}(\dd z',\dd v)
&=
\frac{1}{D_h(z)}
\left[
\int_\calZ
\kappa_{Z,h_Z}(z,z')p_Z(z')\dd z'
\right]
\left[
\int_{\OmegaL}
\kappa_{U,h_U}(u,v)\dd v
\right]\\
&=1.
\end{aligned}
\]
Thus $Q_{z,u}$ is a probability measure. Moreover, by compact support
of the kernels,
\[
\operatorname{supp}(Q_{z,u})
\subseteq
\left\{
(z',v):
\norm{z'-z}\leq h_Z,\
\norm{v-u}\leq h_U
\right\}.
\]
We therefore have the exact local-average representation
\[
\frac{N_h^\nu(z,u)}{D_h(z)}
=
\int_\calZ\int_{\OmegaL}
\lambda_\nu(z',v)
Q_{z,u}(\dd z',\dd v).
\]
It follows that
\[
\begin{aligned}
\left|
\frac{N_h^\nu(z,u)}{D_h(z)}
-
\lambda_\nu(z,u)
\right|
&\leq
\int_\calZ\int_{\OmegaL}
\left|
\lambda_\nu(z',v)-\lambda_\nu(z,u)
\right|
Q_{z,u}(\dd z',\dd v)\\
&\leq
\omega_{\lambda_\nu}(h_Z+h_U).
\end{aligned}
\]
The same probability-average representation gives
\[
\left|
\frac{N_h^\nu(z,u)}{D_h(z)}
\right|
\leq
\norm{\lambda_\nu}_\infty.
\]
Finally, since $D_h(z)\leq C_Z$,
\[
\left|
N_h^\nu(z,u)
\right|
\leq
C_Z\norm{\lambda_\nu}_\infty.
\]
\end{proof}

\begin{lemma}[Bounds for the kernelized weighted response]
\label{lem:kernelized-response-bounds}
Let
\[
Y^w_{h_U}(u)
:=
\int_{\OmegaL}\kappa_{U,h_U}(u,v)\dd\mu^w(v).
\]
Under \Cref{ass:kernels,ass:first-moment} and
\Cref{ass:envelope}\textup{(E1)}, for all sufficiently large $n$,
uniformly over $z\in\calZ$ and $u\in\OmegaL$,
\begin{gather*}
0
\leq
Y^w_{h_U}(u)
\leq
CM_wh_U^{-2}
\qquad\text{almost surely},
\\
0
\leq
\E\bigl[Y^w_{h_U}(u)\mid Z=z\bigr]
\leq
\norm{\lambda_w}_\infty,
\qquad
\E\bigl[Y^w_{h_U}(u)^2\mid Z=z\bigr]
\leq
Ch_U^{-2}.
\end{gather*}
If \Cref{ass:geometry-Z,ass:design} also hold, then, uniformly over
$z\in\calZ$ and $u\in\OmegaL$,
\[
\left|
\kappa_{Z,h_Z}(z,Z)Y^w_{h_U}(u)
\right|
\leq
CM_wh_Z^{-d}h_U^{-2}
\qquad\text{almost surely},
\]
and
\[
\E\left[
\left(
\kappa_{Z,h_Z}(z,Z)Y^w_{h_U}(u)
\right)^2
\right]
\leq
Ch_Z^{-d}h_U^{-2}.
\]
\end{lemma}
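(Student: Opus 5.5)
The plan is to derive each bound directly from the definition of $Y^w_{h_U}(u)$ as an integral of the normalized kernel against $\mu^w$, using the kernel bounds of \Cref{obs:kernel-bounds}.

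\emph{Almost-sure bound.} Nonnegativity is immediate from $\kappa_{U,h_U}\ge0$ and $\mu^w\ge0$. For the upper bound, take the sup of the kernel and use (E1):
\[
Y^w_{h_U}(u)\le \sup_{v}\kappa_{U,h_U}(u,v)\,\mu^w(\OmegaL)\le Ch_U^{-2}M_w,
\]
with the constant from \eqref{eq:kernel-norm-bounds}. This holds for all sufficiently small $h_U$, hence for all large $n$.

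\emph{Conditional mean.} By the definition of $\Lambda^w_z$, the conditional expectation is
\[
\E[Y^w_{h_U}(u)\mid Z=z]=\int\kappa_{U,h_U}(u,v)\lambda_w(z,v)\,\dd v .
\]
Since $\kappa_{U,h_U}(u,\cdot)$ is a probability density by \eqref{eq:kernel-normalization}, this is at most $\norm{\lambda_w}_\infty$. The integral identity is justified by the usual simple-function and monotone-class extension of $\E[\mu^w(A)\mid Z]=\Lambda^w_Z(A)$ to nonnegative measurable integrands.

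\emph{Conditional second moment.} This is the step needing care, since squaring $Y$ produces the pair correlation of the diagram, which we do not control. The trick is to avoid it: by Cauchy--Schwarz (equivalently Jensen with respect to the finite measure $\mu^w$),
\[
Y^2\le \mu^w(\OmegaL)\int\kappa_{U,h_U}(u,v)^2\,\dd\mu^w(v)\le M_w\,Ch_U^{-2}\int\kappa_{U,h_U}(u,v)\,\dd\mu^w(v).
\]
The second inequality uses the sup bound on one factor of $\kappa^2$. Taking conditional expectations and applying the previous step gives
\[
\E[Y^2\mid Z=z]\le CM_wh_U^{-2}\norm{\lambda_w}_\infty = Ch_U^{-2}.
\]
So the second moment costs only $h_U^{-2}$, not $h_U^{-4}$, and no second-order intensity assumption is needed.

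\emph{Covariate-weighted bounds.} Under \Cref{ass:geometry-Z,ass:design}, \eqref{eq:kernel-norm-bounds} gives $\kappa_{Z,h_Z}\le Ch_Z^{-d}$. Multiplying by the almost-sure bound on $Y$ yields the envelope $CM_wh_Z^{-d}h_U^{-2}$. For the second moment, condition on $Z$ and use the conditional second-moment bound:
\[
\E[\kappa_{Z,h_Z}(z,Z)^2\,\E[Y^2\mid Z]]\le Ch_U^{-2}\int\kappa_{Z,h_Z}(z,z')^2p_Z(z')\,\dd z'\le Ch_Z^{-d}h_U^{-2},
\]
where the last step is \eqref{eq:covariate-kernel-second-moment}. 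All constants are uniform in $(z,u)$, since every kernel bound used is uniform.
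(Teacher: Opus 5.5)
Your proof is correct and follows the paper's argument step for step: the supremum bound on the kernel combined with (E1), the conditional mean-measure identity with kernel normalization, Cauchy--Schwarz with respect to $\mu^w$, and finally conditioning on $Z$ together with \eqref{eq:covariate-kernel-second-moment}. The only deviation is in the second-moment step: you bound one factor of $\kappa_{U,h_U}^2$ by its supremum and reuse the first-moment bound, whereas the paper applies the mean-measure identity to $\kappa_{U,h_U}^2$ and uses $\int_{\OmegaL}\kappa_{U,h_U}(u,v)^2\,\dd v\le Ch_U^{-2}$ together with $\norm{\lambda_w}_\infty$; both give $Ch_U^{-2}$, and your variant does not even need Cauchy--Schwarz, since $Y^w_{h_U}(u)^2\le CM_wh_U^{-2}\,Y^w_{h_U}(u)$ almost surely.
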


\begin{proof}
The persistence-kernel bounds collected in
\Cref{obs:kernel-bounds} give
\[
\kappa_{U,h_U}(u,v)\geq0,
\qquad
\sup_{u,v\in\OmegaL}\kappa_{U,h_U}(u,v)
\leq
Ch_U^{-2}.
\]
Since $\mu^w$ is a nonnegative measure and
$\mu^w(\OmegaL)\leq M_w$ almost surely under
\Cref{ass:envelope}\textup{(E1)}, we have, almost surely,
\[
\begin{aligned}
0
\leq
Y^w_{h_U}(u)
&=
\int_{\OmegaL}
\kappa_{U,h_U}(u,v)\dd\mu^w(v)\\
&\leq
Ch_U^{-2}\mu^w(\OmegaL)
\leq
CM_wh_U^{-2}.
\end{aligned}
\]

By the definition of the conditional mean measure,
\[
\E\bigl[Y^w_{h_U}(u)\mid Z=z\bigr]
=
\int_{\OmegaL}
\kappa_{U,h_U}(u,v)\lambda_w(z,v)\dd v.
\]
The kernel is nonnegative and integrates to one by
\eqref{eq:kernel-normalization}. Therefore,
\[
0
\leq
\E\bigl[Y^w_{h_U}(u)\mid Z=z\bigr]
\leq
\norm{\lambda_w}_\infty.
\]
For the conditional second moment, the Cauchy--Schwarz inequality with
respect to the finite measure $\mu^w$ gives
\[
\begin{aligned}
Y^w_{h_U}(u)^2
&=
\left[
\int_{\OmegaL}\kappa_{U,h_U}(u,v)\dd\mu^w(v)
\right]^2\\
&\leq
\mu^w(\OmegaL)
\int_{\OmegaL}\kappa_{U,h_U}(u,v)^2\dd\mu^w(v)\\
&\leq
M_w
\int_{\OmegaL}\kappa_{U,h_U}(u,v)^2\dd\mu^w(v)
\qquad\text{almost surely}.
\end{aligned}
\]
Taking conditional expectations, using the defining property of the
conditional mean measure, and then applying
\eqref{eq:kernel-norm-bounds}, we obtain
\begin{align}
\E\bigl[Y^w_{h_U}(u)^2\mid Z=z\bigr]
&\leq
M_w
\int_{\OmegaL}
\kappa_{U,h_U}(u,v)^2\lambda_w(z,v)\dd v\nonumber\\
&\leq
M_w\norm{\lambda_w}_\infty
\int_{\OmegaL}\kappa_{U,h_U}(u,v)^2\dd v
\leq Ch_U^{-2}.
\label{eq:kernel-response-second-moment}
\end{align}

Now suppose that \Cref{ass:geometry-Z,ass:design} also hold. The
covariate-kernel pointwise bound in
\eqref{eq:kernel-norm-bounds}, combined with the almost-sure response
bound above, gives
\[
\left|
\kappa_{Z,h_Z}(z,Z)Y^w_{h_U}(u)
\right|
\leq
CM_wh_Z^{-d}h_U^{-2}
\qquad\text{almost surely}.
\]

Finally, the tower property and
\eqref{eq:kernel-response-second-moment} yield
\begin{align*}
&\E\left[
\left(
\kappa_{Z,h_Z}(z,Z)Y^w_{h_U}(u)
\right)^2
\right]
\\
&\qquad=
\E\left[
\kappa_{Z,h_Z}(z,Z)^2
\E\bigl[Y^w_{h_U}(u)^2\mid Z\bigr]
\right]
\\
&\qquad\leq
Ch_U^{-2}
\E\left[\kappa_{Z,h_Z}(z,Z)^2\right]
\\
&\qquad\leq
Ch_Z^{-d}h_U^{-2},
\end{align*}
where the last inequality is
\eqref{eq:covariate-kernel-second-moment}.
\end{proof}

\begin{lemma}[Uniform numerator concentration with a variable envelope]
\label{lem:numerator-concentration}
For each $n$, let
$(Z_1,\nu_{1,n}),\ldots,(Z_n,\nu_{n,n})$ be independent and identically
distributed pairs, where the $\nu_{i,n}$ are finite nonnegative random measures
on $\OmegaL$ and the covariates have a density satisfying $p_Z\leq C_Z$ on
$\calZ$. Define
\[
Y^{\nu}_{i,n,h_U}(u)
:=\int_{\OmegaL}\kappa_{U,h_U}(u,v)\dd\nu_{i,n}(v),
\qquad
\widehat N_n^{\nu}(z,u)
:=\frac1n\sum_{i=1}^n
\kappa_{Z,h_Z}(z,Z_i)Y^{\nu}_{i,n,h_U}(u),
\]
and let $N_h^{\nu}:=\E\widehat N_n^{\nu}$. Assume
\Cref{ass:kernels,ass:geometry-Z}, and suppose that there are deterministic
numbers $M_n$ with $1\leq M_n\leq n$ such that
\[
\nu_{i,n}(\OmegaL)\leq M_n\quad\text{almost surely},
\qquad
\sup_{z\in\calZ,u\in\OmegaL}
\E\!\left[(Y^{\nu}_{i,n,h_U}(u))^2\mid Z_i=z\right]
\leq C_Yh_U^{-2}.
\]
Then, for every $\delta\in(0,1)$ and all sufficiently large $n$, with
probability at least $1-\delta$,
\begin{equation}\label{eq:variable-envelope-numerator}
\sup_{z\in\calZ,u\in\OmegaL}
|\widehat N_n^{\nu}(z,u)-N_h^{\nu}(z,u)|
\leq C\left(
\sqrt{\frac{L_{n,\delta}}{nh_Z^dh_U^2}}
+\frac{M_nL_{n,\delta}}{nh_Z^dh_U^2}
\right),
\qquad
L_{n,\delta}:=\log\frac{Cn}{\delta h_Z^dh_U^2}.
\end{equation}
Here $C$ depends only on the kernel, domain, design-upper-bound, and
second-moment constants, and not on $M_n$.
\end{lemma}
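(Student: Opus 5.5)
The plan is to apply \Cref{prop:uniform-empirical-process} to the class indexed by $t=(z,u)\in T:=\calZ\times\OmegaL$. The observations are $X_i=(Z_i,\nu_{i,n})$, and the functions are
\[
f_{z,u}(X):=\kappa_{Z,h_Z}(z,Z)\,Y^{\nu}_{n,h_U}(u),\qquad g_{z,u}:=f_{z,u}-Pf_{z,u}.
\]
The index set is compact and therefore separable. Continuity of $t\mapsto g_t(X)$ follows from \eqref{eq:kappa-z-lipschitz}, \eqref{eq:kappa-u-lipschitz} and finiteness of $\nu_{i,n}(\OmegaL)$.

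\textbf{Envelope and variance.} By \eqref{eq:kernel-norm-bounds} and the mass bound,
\[
|f_{z,u}|\le Ch_Z^{-d}\cdot Ch_U^{-2}M_n,
\]
so we take $B:=CM_nh_Z^{-d}h_U^{-2}$. For the variance, condition on $Z$, use the assumed conditional second-moment bound $C_Yh_U^{-2}$, and then apply \eqref{eq:covariate-kernel-second-moment}. This needs only $p_Z\le C_Z$, through the $L^2$ bound in \Cref{obs:kernel-bounds}, so the design lower bound is never used. The result is $Pg^2\le Pf^2\le \sigma^2:=Ch_Z^{-d}h_U^{-2}$, with a constant free of $M_n$. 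Since $M_n\ge1$, enlarging constants gives $\sigma\le B$.

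\textbf{Entropy.} I expect this step to be the main point of care, because the envelope $B$ carries $M_n$ and the bound must not leak $M_n$ into $\sigma$. For $t=(z,u)$ and $t'=(z',u')$, split
\[
f_t-f_{t'}=\bigl[\kappa_Z(z,\cdot)-\kappa_Z(z',\cdot)\bigr]Y(u)+\kappa_Z(z',\cdot)\bigl[Y(u)-Y(u')\bigr].
\]
The Lipschitz bounds then give, pointwise in $X$,
\[
|f_t-f_{t'}|\le CM_n\bigl(h_Z^{-d-1}h_U^{-2}\|z-z'\|+h_Z^{-d}h_U^{-3}\|u-u'\|\bigr)\le CB\,\frac{\|t-t'\|}{h_Z\wedge h_U}.
\]
The same bound holds for the centered $g$'s, since the difference of the means is at most twice this sup difference. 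A Euclidean net of $T\subset\R^{d+2}$ with mesh proportional to $\eta(h_Z\wedge h_U)/B$ therefore yields a uniform $L^2(Q)$ cover for every probability measure $Q$. Its cardinality is at most $(AB/\eta)^{d+2}$ with $A:=C(h_Z\wedge h_U)^{-1}\ge e$, so we take $v_0=d+2$.

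\textbf{Conclusion.} \Cref{prop:uniform-empirical-process} then gives the bound with
\[
H=(d+2)\log(AB/\sigma)+\log(1/\delta),\qquad AB/\sigma\le C M_n h_Z^{-d/2}h_U^{-1}(h_Z\wedge h_U)^{-1}.
\]
Using $M_n\le n$ and $\log(1/h_Z)+\log(1/h_U)\lesssim\log(1/(h_Z^dh_U^2))$ (with $d\ge1$), we get $H\le C\,L_{n,\delta}$ for large $n$ after adjusting the constant inside the logarithm. Substituting the values of $\sigma^2$ and $B$ into \eqref{eq:uniform-empirical-process-bound} gives
\[
\sqrt{\sigma^2H/n}\le C\sqrt{L_{n,\delta}/(nh_Z^dh_U^2)},\qquad BH/n\le CM_nL_{n,\delta}/(nh_Z^dh_U^2),
\]
which is \eqref{eq:variable-envelope-numerator}. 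The constant depends only on the kernels, the domain, $C_Z$ and $C_Y$. The $M_n$ dependence in the logarithm is absorbed through $M_n\le n$, so $C$ stays independent of $M_n$.
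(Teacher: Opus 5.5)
Your proposal is correct and follows the paper's proof essentially step for step. It uses the same centered class $g_{z,u}=f_{z,u}-Pf_{z,u}$, the same envelope $CM_nh_Z^{-d}h_U^{-2}$, and an $M_n$-free variance bound $Ch_Z^{-d}h_U^{-2}$ obtained from the conditional second moment and $p_Z\le C_Z$ alone; it then applies \Cref{prop:uniform-empirical-process} with Lipschitz-in-index entropy ($v_0=d+2$) and absorbs $\log M_n$ through $M_n\le n$. The only cosmetic difference is that you use a single isotropic net of mesh proportional to $\eta(h_Z\wedge h_U)/B$, whereas the paper uses a product of nets with meshes $\eta h_Z/B_n$ and $\eta h_U/B_n$ (so $A_n=C(h_Z^dh_U^2)^{-1/(d+2)}$); both give $H\le CL_{n,\delta}$ and hence the same bound.
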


\begin{proof}
At fixed $n$, let $P$ denote the common law of $(Z_i,\nu_{i,n})$.  For
$x=(\zeta,\nu)$, define
\[
f_{z,u}(x)
:=
\kappa_{Z,h_Z}(z,\zeta)
\int_{\OmegaL}\kappa_{U,h_U}(u,v)\dd\nu(v),
\qquad
g_{z,u}:=f_{z,u}-Pf_{z,u}.
\]
Then $(P_n-P)g_{z,u}=\widehat N_n^\nu(z,u)-N_h^\nu(z,u)$.
The index set $\calZ\times\OmegaL$ is compact and hence separable, and
$(z,u)\mapsto g_{z,u}(x)$ is continuous for every $x$, because the
normalized kernels are Lipschitz and $\nu$ has finite mass.

The pointwise kernel bounds and $\nu(\OmegaL)\leq M_n$ give
$0\leq f_{z,u}\leq B_n:=CM_nh_Z^{-d}h_U^{-2}$.  Since $f_{z,u}$ and
$Pf_{z,u}$ belong to the same interval,
$\norm{g_{z,u}}_\infty\leq B_n$.  Moreover, conditioning on $Z_i$ and
using the assumed response second moment, $p_Z\leq C_Z$, and
\eqref{eq:kernel-norm-bounds}, we obtain
\begin{align}
Pg_{z,u}^2
&\leq Pf_{z,u}^2\nonumber\\
&\leq C_Yh_U^{-2}
\int_\calZ\kappa_{Z,h_Z}(z,z')^2p_Z(z')\dd z'\nonumber\\
&\leq Ch_Z^{-d}h_U^{-2}
=:\sigma_n^2.
\label{eq:numerator-class-bounds}
\end{align}

For the entropy bound, the normalized-kernel Lipschitz estimates and
$\nu(\OmegaL)\leq M_n$ give, uniformly over the observation $x$,
\begin{align*}
|f_{z,u}(x)-f_{z',u'}(x)|
&\leq
CM_n\left(
h_Z^{-d-1}h_U^{-2}\norm{z-z'}
+h_Z^{-d}h_U^{-3}\norm{u-u'}
\right)\\
&\leq
CB_n\left(
\frac{\norm{z-z'}}{h_Z}
+\frac{\norm{u-u'}}{h_U}
\right).
\end{align*}
Centering changes only the constant, since
$|P(f_{z,u}-f_{z',u'})|\leq P|f_{z,u}-f_{z',u'}|$.  Therefore, nets of
$\calZ$ and $\OmegaL$ with respective mesh widths proportional to
$\eta h_Z/B_n$ and $\eta h_U/B_n$ induce an $L^2(Q)$-cover of
$\mathcal G_n:=\{g_{z,u}:(z,u)\in\calZ\times\OmegaL\}$ at radius
$\eta$, for every probability measure $Q$.  The Euclidean covering
bounds in dimensions $d$ and $2$ yield
\begin{align}
N(\mathcal G_n,L^2(Q),\eta)
&\leq
C\left(\frac{B_n}{\eta h_Z}\right)^d
 \left(\frac{B_n}{\eta h_U}\right)^2\nonumber\\
&\leq
\left(\frac{A_nB_n}{\eta}\right)^{d+2},
\qquad
A_n:=C(h_Z^dh_U^2)^{-1/(d+2)},
\label{eq:numerator-entropy}
\end{align}
where $C$ may be increased so that $A_n\geq e$.

We may now apply \Cref{prop:uniform-empirical-process} with
$v_0=d+2$, $B=B_n$, and $\sigma=\sigma_n$.  For sufficiently large $n$,
$\sigma_n\leq B_n$, and \eqref{eq:numerator-entropy} gives
\begin{align*}
\log\!\left(\frac{A_nB_n}{\sigma_n}\right)
&\leq C\{1+\log M_n+\log(1/h_Z)+\log(1/h_U)\},\\
H_{\mathcal G_n,\delta}
&\leq C\log\!\left(\frac{Cn}{\delta h_Z^dh_U^2}\right)
=CL_{n,\delta},
\end{align*}
where the last inequality uses $M_n\leq n$ and $h_Z,h_U\leq1$ eventually.
Substitution of $B_n$ and $\sigma_n^2$ into
\eqref{eq:uniform-empirical-process-bound} now gives
\eqref{eq:variable-envelope-numerator}.  The constants in the entropy,
envelope, and variance bounds depend only on the quantities listed in the
statement and not on $M_n$.
\end{proof}

\subsection[Proof of the sup-norm theorem and H\"older-rate corollary]{Proof of \Cref{thm:supnorm-rate,cor:holder-rate}}\label{app:supnorm}

\begin{proof}[Proof of \Cref{thm:supnorm-rate}]
Set
\[
\lambda_h(z,u):=\frac{N_h(z,u)}{D_h(z)}.
\]
We control separately the deterministic smoothing bias
$\lambda_h-\lambda_w$ and the stochastic error
$\widehat\lambda_{w,n}-\lambda_h$.

\medskip
\noindent
\emph{Deterministic bias.}
Applying \Cref{lem:double-kernel-bias} with $\nu=\mu^w$ and $\lambda_\nu=\lambda_w$, we have
\begin{equation}\label{eq:theorem-bias-bound}
\sup_{z,u}|\lambda_h(z,u)-\lambda_w(z,u)|
\leq \omega_{\lambda_w}(h_Z+h_U).
\end{equation}
The same lemma gives
$\norm{\lambda_h}_\infty\leq\norm{\lambda_w}_\infty$.

\medskip
\noindent
\emph{Stochastic ratio error.}
By \Cref{lem:kernelized-response-bounds}, the kernelized response satisfies
the conditional second-moment bound
\[
\sup_{z\in\calZ,u\in\OmegaL}
\E\left[Y^w_{i,h_U}(u)^2\mid Z_i=z\right]
\leq Ch_U^{-2},
\]
with the fixed constants $M_w$ and $\norm{\lambda_w}_\infty$ absorbed
into $C$.  Thus all moment conditions needed for
\Cref{lem:numerator-concentration} have already been verified.

The mass-envelope hypothesis of
\Cref{lem:numerator-concentration} holds with $\nu_{i,n}=\mu_i^w$ and  $M_n=M_w\vee1$.
Applying that lemma with failure probability $\delta/2$, and using that
$M_w$ is fixed, gives an event $\mathcal E_N$ with probability at least $1 - \delta/2$ such that, on $\mathcal E_N$,
\begin{equation}
\label{eq:numerator-concentration}
\sup_{z\in\calZ,u\in\OmegaL}
\left|
\widehat N_n(z,u)-N_h(z,u)
\right|
\leq
C r_n(h_Z,h_U,\delta).
\end{equation}
Indeed, the square-root term in
\eqref{eq:variable-envelope-numerator} is the corresponding term in
$r_n$, while the fixed factor $M_w\vee1$ in the linear term is absorbed
into $C$; replacing $\delta/2$ by $\delta$ inside the logarithm only
changes the constant.

Applying \Cref{lem:denominator-concentration} with failure probability
$\delta/2$ similarly gives an event $\mathcal E_D$  such that, on $\mathcal E_D$,
\begin{equation}
\label{eq:denominator-concentration}
\sup_{z\in\calZ}
\left|
\widehat D_n(z)-D_h(z)
\right|
\leq
C r_{Z,n}(h_Z,\delta).
\end{equation}
Let $\mathcal E = \mathcal E_N\cap\mathcal E_D$. By a union bound, $\Pp(\mathcal E)\geq1-\delta$ and, on $\mathcal E$ both \eqref{eq:numerator-concentration} and
\eqref{eq:denominator-concentration} hold simultaneously.
Since $h_U\leq1$ for all sufficiently large $n$, comparison of the two rate
functions gives
\begin{equation}\label{eq:rz-dominated}
r_{Z,n}(h_Z,\delta)\leq C r_n(h_Z,h_U,\delta).
\end{equation}
Moreover, \Cref{ass:kernels} implies
$nh_Z^d/\log n\to\infty$, and therefore
$r_{Z,n}(h_Z,\delta)\to0$ for every fixed $\delta\in(0,1)$.
Thus, for all sufficiently large $n$, the final conclusion of
\Cref{lem:denominator-concentration} gives
$\inf_z\widehat D_n(z)\geq c_Z/2$ on $\mathcal E$. In particular, the
estimator is given by the ratio
$\widehat\lambda_{w,n}=\widehat N_n/\widehat D_n$ throughout
$\calZ\times\OmegaL$ on this event.

The ratio error is controlled by one exact identity. Since
$N_h=\lambda_hD_h$,
\[
\widehat\lambda_{w,n}-\lambda_h
=
\frac{(\widehat N_n-N_h)-\lambda_h(\widehat D_n-D_h)}
     {\widehat D_n}.
\]
Using the bounds from \Cref{lem:double-kernel-bias},
\eqref{eq:numerator-concentration}, \eqref{eq:denominator-concentration}, and
\eqref{eq:rz-dominated}, together with
$\inf_z\widehat D_n(z)\geq c_Z/2$, we obtain on $\mathcal E$
\begin{align*}
\sup_{z,u}|\widehat\lambda_{w,n}(z,u)-\lambda_h(z,u)|
&\leq \frac{2}{c_Z}
\left(
\sup_{z,u}|\widehat N_n-N_h|
+\norm{\lambda_h}_\infty\sup_z|\widehat D_n-D_h|
\right)\\
&\leq C r_n(h_Z,h_U,\delta).
\end{align*}
Combining this bound with \eqref{eq:theorem-bias-bound} proves
\eqref{eq:supnorm-rate}.

It remains to prove the convergence statements. Under \Cref{ass:kernels},
$h_Z+h_U\to0$, so uniform continuity of $\lambda_w$ gives
$\omega_{\lambda_w}(h_Z+h_U)\to0$. The same assumption and
\eqref{eq:bandwidth-log-consequence} imply
$r_n(h_Z,h_U,\delta)\to0$ for every fixed $\delta\in(0,1)$.
Thus, given $\varepsilon,\eta>0$, apply the finite-sample bound with
$\delta=\eta$; its right-hand side is smaller than $\varepsilon$ for all
sufficiently large $n$. Hence
\[
\norm{\widehat\lambda_{w,n}-\lambda_w}_\infty
\overset{P}{\longrightarrow}0.
\]
Finally,
\[
\mathcal L_{\mathrm{IU}}(\widehat\lambda_{w,n},\lambda_w)
\leq
\norm{\widehat\lambda_{w,n}-\lambda_w}_\infty,
\]
so convergence under the integrated uniform loss follows as well.
\end{proof}

\begin{proof}[Proof of \Cref{cor:holder-rate}]
For every pair $(z,u),(z',u')$, the assumed anisotropic H\"older condition gives
\[
|\lambda_w(z,u)-\lambda_w(z',u')|
\leq L_Z\norm{z-z'}^{s_Z}+L_U\norm{u-u'}^{s_U}.
\]
On the support of the smoothing weights in \Cref{lem:double-kernel-bias},
$\norm{z-z'}\leq h_Z$ and $\norm{u-u'}\leq h_U$. Therefore the bias part of
that lemma has the sharper anisotropic bound
\begin{equation}\label{eq:anisotropic-bias}
\sup_{z,u}|\lambda_h(z,u)-\lambda_w(z,u)|
\leq L_Zh_Z^{s_Z}+L_Uh_U^{s_U}.
\end{equation}

Fix a confidence level $\delta\in(0,1)$. By
\eqref{eq:bandwidth-log-consequence},
\[
\log\!\left(\frac{Cn}{\delta h_Z^dh_U^2}\right)=O(\log n).
\]
Combining this fact, \eqref{eq:anisotropic-bias}, and
\Cref{thm:supnorm-rate} gives
\begin{align}
\sup_{z,u}|\widehat\lambda_{w,n}(z,u)-\lambda_w(z,u)|
=O_P\left(
h_Z^{s_Z}+h_U^{s_U}
+\sqrt{\frac{\log n}{nh_Z^dh_U^2}}
+\frac{\log n}{nh_Z^dh_U^2}
\right).\label{eq:holder-prebalance}
\end{align}

Let
\[
a_n:=\frac{\log n}{n}
\quad\text{and}\quad
\varrho_n:=a_n^{1/(2+d/s_Z+2/s_U)}.
\]
Choose the bandwidths so that both bias terms have order $\varrho_n$:
\[
h_Z^{s_Z}\asymp\varrho_n,
\qquad
h_U^{s_U}\asymp\varrho_n.
\]
Equivalently,
\begin{align*}
h_Z
&\asymp\varrho_n^{1/s_Z}
=a_n^{1/(2s_Z+d+2s_Z/s_U)},\\
h_U
&\asymp\varrho_n^{1/s_U}
=a_n^{1/(2s_U+2+ds_U/s_Z)},
\end{align*}
which is exactly \eqref{eq:holder-bandwidths}. To verify the stochastic balance,
observe that
\[
h_Z^dh_U^2
\asymp\varrho_n^{d/s_Z+2/s_U}.
\]
Therefore,
\begin{align*}
\sqrt{\frac{a_n}{h_Z^dh_U^2}}
&\asymp a_n^{1/2}
\varrho_n^{-d/(2s_Z)-1/s_U}\\
&=\varrho_n^{1+d/(2s_Z)+1/s_U}
\varrho_n^{-d/(2s_Z)-1/s_U}
=\varrho_n,
\end{align*}
where the equality $a_n^{1/2}=\varrho_n^{1+d/(2s_Z)+1/s_U}$ follows directly
from the definition of $\varrho_n$. Squaring this relation gives
\[
\frac{a_n}{h_Z^dh_U^2}\asymp\varrho_n^2=o(\varrho_n).
\]
Thus the final term in \eqref{eq:holder-prebalance}, namely
$\log n/(nh_Z^dh_U^2)$, is $o(\varrho_n)$. Substitution into
\eqref{eq:holder-prebalance} proves the anisotropic rate.

These bandwidths satisfy \Cref{ass:kernels}: both tend to zero,
their logarithms are $O(\log n)$, and
\[
\frac{nh_Z^dh_U^2}{\log n}
=\frac{h_Z^dh_U^2}{a_n}
\asymp\varrho_n^{-2}\longrightarrow\infty.
\]
Finally, if $s_Z=s_U=s$, then
\[
\varrho_n
=\left(\frac{\log n}{n}\right)^{1/(2+(d+2)/s)}
=\left(\frac{\log n}{n}\right)^{s/(2s+d+2)},
\]
which is the stated isotropic rate.
\end{proof}

\subsection[Proof of the minimax lower bound]{Proof of \Cref{thm:lower-bound}}\label{app:lower-bound}

\begin{proof}[Proof of \Cref{thm:lower-bound}]
Write
\[
d_\star:=d+2.
\]
For each sufficiently small scale $h>0$, we construct laws
\[
P_0,P_1,\ldots,P_{M_h}
\]
in $\mathcal P_{\mathrm{bm}}(s,L_H,M_w)$, with corresponding conditional weighted intensities
\[
\lambda_{w,0},\lambda_{w,1},\ldots,\lambda_{w,M_h}.
\]
In view of the Kullback version of the multiple-testing theorem
\citep[Theorem~2.5]{tsybakov2009introduction}, it is enough to establish the
following two properties for a suitable radius $\varepsilon_h$:
\begin{enumerate}
\item the targets are pairwise separated,
\[
\norm{\lambda_{w,j}-\lambda_{w,k}}_\infty\geq2\varepsilon_h
\]
for every $0\leq j<k\leq M_h$;
\item the experiments are sufficiently difficult to distinguish,
\[
P_j^{\otimes n}\ll P_0^{\otimes n}
\]
and
\[
\frac{1}{M_h}\sum_{j=1}^{M_h}\operatorname{KL}\left(P_j^{\otimes n},P_0^{\otimes n}\right)\leq\alpha\log M_h
\]
for some fixed $\alpha\in(0,1/8)$.
\end{enumerate}
We shall obtain $\varepsilon_h\asymp h^s$ and $M_h\asymp h^{-d_\star}$.

\emph{Step 1: an interior region and a normalized local perturbation.}

Since $w$ is continuous and vanishes on $\DeltaL$, an interior point can be
chosen sufficiently close to the diagonal that its weight is smaller than
$M_w$. Strict positivity on $\OmegaLo$ and continuity then give a sufficiently
small closed ball $C\subset\OmegaLo$ such that
\[
0<\inf_{u\in C}w(u)\leq\sup_{u\in C}w(u)<M_w.
\]
Choose closed balls $B'$ and $B$ satisfying
\[
B'\subset\operatorname{int}(B),\qquad B\subset\operatorname{int}(C).
\]
Let $f_0\in C_c^\infty(\operatorname{int}(C))$ be a probability density that is strictly positive on $B$, and define
\[
m_0:=\inf_{u\in B}f_0(u)>0,\qquad w_B:=\inf_{u\in B}w(u)>0.
\]

Assume for the moment that there exists a function $\Phi\in C_c^\infty(\mathbb R^d\times\mathbb R^2)$ satisfying
\[
\operatorname{supp}(\Phi)\subset B_{d_\star}(0,1),\qquad \Phi(0,0)=1,
\]
and
\[
\int_{\mathbb R^2}\Phi(t,v)\dd v=0
\]
for every $t\in\mathbb R^d$. The last condition ensures that a perturbation by $\Phi$ preserves the integral of the conditional location density for each fixed covariate value. The existence of such a function is verified after the proof.

Choose a constant $L_\star>0$ such that
\[
L_\star\leq1,\qquad L_\star\norm{\Phi}_\infty\leq\frac{m_0}{2}.
\]
The function $u\mapsto w(u)f_0(u)$ is Lipschitz on the compact persistence window and is therefore $s$-H\"older. Let $H_0<\infty$ denote one of its $s$-H\"older constants.

There is also a constant $H_1<\infty$, independent of $h\in(0,1]$ and of the centre $x_0=(z_0,u_0)$, such that
\[
(z,u)\longmapsto h^sw(u)\Phi\left(\frac{z-z_0}{h},\frac{u-u_0}{h}\right)
\]
is $s$-H\"older with constant at most $H_1$. Indeed, if $x=(z,u)$ and $y=(z',u')$, smoothness and compact support of $\Phi$ give
\begin{align*}
&h^s\left|w(u)\Phi\left(\frac{x-x_0}{h}\right)-w(u')\Phi\left(\frac{y-x_0}{h}\right)\right|\\
&\leq\norm{w}_\infty[\Phi]_{C^s}\norm{x-y}^s+h^s\operatorname{Lip}(w)\norm{\Phi}_\infty\norm{u-u'}\\
&\leq H_1\left(\norm{z-z'}+\norm{u-u'}\right)^s,
\end{align*}
where the last inequality uses $s\leq1$ and compactness of the domain.

Finally, choose a fixed number $\vartheta\in(0,1)$ such that
\[
\vartheta\leq\frac12,\qquad \vartheta\left(H_0+L_\star H_1\right)\leq L_H.
\]

\emph{Step 2: construction of the conditional location densities.}

For a sufficiently small $h>0$, choose points
\[
x_j=(z_j,u_j)\in[2h,1-2h]^d\times B',\qquad j=1,\ldots,M_h,
\]
whose pairwise Euclidean distances are at least $3h$. They may be chosen so that
\[
M_h\geq c_\star h^{-d_\star}
\]
for a constant $c_\star>0$ independent of $h$. For sufficiently small $h$, the sets $B_{d_\star}(x_j,h)$ are pairwise disjoint and contained in $[0,1]^d\times B$.

Define
\[
f_j(z,u):=f_0(u)+L_\star h^s\Phi\left(\frac{z-z_j}{h},\frac{u-u_j}{h}\right),\qquad j=1,\ldots,M_h,
\]
and retain $f_0(z,u):=f_0(u)$ for the baseline alternative.

For every fixed $z$, the fibrewise zero-integral property of $\Phi$ gives
\begin{align*}
\int_{\OmegaL}f_j(z,u)\dd u
&=1+L_\star h^s\int_{\OmegaL}\Phi\left(\frac{z-z_j}{h},\frac{u-u_j}{h}\right)\dd u\\
&=1+L_\star h^{s+2}\int_{\mathbb R^2}\Phi\left(\frac{z-z_j}{h},v\right)\dd v\\
&=1.
\end{align*}
Moreover, on the support of the perturbation, $u\in B$ and hence $f_0(u)\geq m_0$. Since $h^s\leq1$,
\[
\left|L_\star h^s\Phi\left(\frac{z-z_j}{h},\frac{u-u_j}{h}\right)\right|\leq\frac{m_0}{2}.
\]
Thus every $f_j(z,\cdot)$ is a nonnegative probability density. All these densities are supported in $C$.

\emph{Step 3: realization by mixed binomial point processes.}

For $j=0,\ldots,M_h$, define $P_j$ as follows. First draw
\[
Z\sim\operatorname{Unif}([0,1]^d).
\]
Conditionally on $Z=z$, draw
\[
N\sim\operatorname{Bernoulli}(\vartheta).
\]
If $N=0$, set $D=\emptyset$. If $N=1$, draw a point $U$ with density $f_j(z,\cdot)$ and set
\[
D=\{U\},
\]
with multiplicity one. Thus, conditionally on $Z=z$, $D$ is a mixed binomial point process with a Bernoulli law on its number of points.

Since $U\in C$ whenever $N=1$,
\[
\mu_D^w(\OmegaL)=Nw(U)\leq\sup_{u\in C}w(u)<M_w
\]
almost surely. The conditional weighted mean measure satisfies, for every Borel set $A\subseteq\OmegaL$,
\begin{align*}
\mathbb E_j\left[\mu_D^w(A)\mid Z=z\right]
&=\vartheta\mathbb E_j\left[w(U)\mathbf 1_{\{U\in A\}}\mid Z=z,N=1\right]\\
&=\vartheta\int_Aw(u)f_j(z,u)\dd u.
\end{align*}
Its density is therefore
\[
\lambda_{w,j}(z,u)=\vartheta w(u)f_j(z,u).
\]
By the definitions of $H_0$ and $H_1$,
\[
[\lambda_{w,j}]_{C^s}\leq\vartheta\left(H_0+L_\star H_1\right)\leq L_H.
\]
Consequently,
\[
P_j\in\mathcal P_{\mathrm{bm}}(s,L_H,M_w)
\]
for every $j=0,\ldots,M_h$.

\emph{Step 4: pairwise distance between the target intensities.}

The scaled perturbations have pairwise disjoint supports. For two nonbaseline
alternatives, evaluation at $x_j=(z_j,u_j)$ makes the $j$th perturbation equal
to one and the other vanish. For the baseline and the $k$th alternative,
evaluate instead at $x_k$. Hence, for every $0\leq j<k\leq M_h$,
\[
\norm{\lambda_{w,j}-\lambda_{w,k}}_\infty\geq\vartheta L_\star w_Bh^s.
\]
Define
\[
\varepsilon_h:=\frac{\vartheta L_\star w_B}{2}h^s.
\]
Then
\[
\norm{\lambda_{w,j}-\lambda_{w,k}}_\infty\geq2\varepsilon_h
\]
for every $0\leq j<k\leq M_h$. This verifies the pairwise-separation
condition of the multiple-testing theorem.

\emph{Step 5: Kullback--Leibler divergence.}

Let $Q_{j,z}$ denote the conditional law of $D$ given $Z=z$ under $P_j$.
The probability of the empty diagram is $1-\vartheta$ under every alternative.
On the singleton part, the conditional location density is $f_j(z,\cdot)$.
The perturbation is supported in $B$, where $f_0\geq m_0$, and outside $B$
we have $f_j=f_0$. Thus $Q_{j,z}\ll Q_{0,z}$ and, using the convention that
the common zero part contributes zero,
\[
\operatorname{KL}(Q_{j,z},Q_{0,z})
=\vartheta\int_{\OmegaL}f_j(z,u)
\log\left(\frac{f_j(z,u)}{f_0(u)}\right)\dd u.
\]

For all $x,y>0$,
\[
x\log\left(\frac{x}{y}\right)-x+y\leq\frac{(x-y)^2}{y}.
\]
Since $f_j-f_0$ is supported in $B$ and integrates to zero, the divergence
may be restricted to $B$ and the integral of $-f_j+f_0$ may be added there.
Consequently,
\begin{align*}
\operatorname{KL}(Q_{j,z},Q_{0,z})
&\leq\vartheta\int_B
\frac{\left(f_j(z,u)-f_0(u)\right)^2}{f_0(u)}\dd u\\
&\leq\frac{\vartheta L_\star^2h^{2s}}{m_0}\int_B
\Phi\left(\frac{z-z_j}{h},\frac{u-u_j}{h}\right)^2\dd u.
\end{align*}

The distribution of $Z$ is the same under all alternatives. The chain rule for relative entropy therefore gives
\begin{align*}
\operatorname{KL}(P_j,P_0)
&=\int_{[0,1]^d}\operatorname{KL}(Q_{j,z},Q_{0,z})\dd z\\
&\leq\frac{\vartheta L_\star^2h^{2s}}{m_0}\int_{[0,1]^d}\int_{\OmegaL}\Phi\left(\frac{z-z_j}{h},\frac{u-u_j}{h}\right)^2\dd u\dd z\\
&=\frac{\vartheta L_\star^2}{m_0}h^{2s+d_\star}\int_{\mathbb R^{d_\star}}\Phi(y)^2\dd y.
\end{align*}
Thus, for a constant $C_\Phi<\infty$ independent of $h$ and $j$,
\[
\operatorname{KL}(P_j,P_0)\leq C_\Phi\vartheta L_\star^2h^{2s+d_\star}.
\]
Relative entropy is additive for independent products, so
\[
\operatorname{KL}\left(P_j^{\otimes n},P_0^{\otimes n}\right)\leq C_\Phi\vartheta L_\star^2nh^{2s+d_\star}.
\]

\emph{Step 6: choice of the resolution and application of Tsybakov's theorem.}

Set
\[
h_n^\star:=\left(\frac{\log n}{n}\right)^{1/(2s+d_\star)},\qquad h:=\kappa h_n^\star,
\]
where $\kappa\in(0,1]$ is a fixed constant to be chosen.

Since $M_h\geq c_\star h^{-d_\star}$, there is a constant $c_1>0$ such that
\[
\log M_h\geq c_1\log n
\]
for all sufficiently large $n$. On the other hand,
\[
nh^{2s+d_\star}=\kappa^{2s+d_\star}\log n.
\]
Consequently,
\[
\operatorname{KL}\left(P_j^{\otimes n},P_0^{\otimes n}\right)\leq C_\Phi\vartheta L_\star^2\kappa^{2s+d_\star}\log n.
\]

Fix $\alpha=1/16$. Choose $\kappa>0$ sufficiently small that
\[
C_\Phi\vartheta L_\star^2\kappa^{2s+d_\star}\leq\alpha c_1.
\]
Then, for all sufficiently large $n$,
\[
\frac{1}{M_h}\sum_{j=1}^{M_h}\operatorname{KL}\left(P_j^{\otimes n},P_0^{\otimes n}\right)\leq\alpha\log M_h.
\]

We may therefore apply the Kullback version of the multiple-testing theorem
\citep[Theorem~2.5]{tsybakov2009introduction} with
\[
Q_j=P_j^{\otimes n},\qquad \theta_j=\lambda_{w,j},\qquad d_\Theta(f,g)=\norm{f-g}_\infty,\qquad r=\varepsilon_h.
\]
The pairwise-distance condition was established in Step 4, absolute continuity was established in Step 5, and the average Kullback--Leibler condition was established above. Hence
\begin{align*}
&\inf_{\widehat\lambda}\sup_{P\in\mathcal P_{\mathrm{bm}}(s,L_H,M_w)}P^{\otimes n}\left(\norm{\widehat\lambda-\lambda_{w,P}}_\infty\geq\varepsilon_h\right)\\
&\geq\frac{\sqrt{M_h}}{1+\sqrt{M_h}}\left(1-\frac18-\sqrt{\frac{1}{8\log M_h}}\right).
\end{align*}
Since $M_h\to\infty$, the right-hand side is bounded below by a constant $c_0>0$ for all sufficiently large $n$.

Finally,
\[
\varepsilon_h=\frac{\vartheta L_\star w_B\kappa^s}{2}\left(\frac{\log n}{n}\right)^{s/(2s+d+2)}.
\]
The result follows with
\[
c:=\frac{\vartheta L_\star w_B\kappa^s}{2}.
\]
\end{proof}

\paragraph{Construction of the perturbation $\Phi$.}

Choose $\eta\in C_c^\infty(\mathbb R^d)$ satisfying
\[
\operatorname{supp}(\eta)\subset B_d(0,1/4),\qquad \eta(0)=1.
\]
Choose a nonnegative function $\psi_+\in C_c^\infty(\mathbb R^2)$ satisfying
\[
\operatorname{supp}(\psi_+)\subset B_2(0,1/8),\qquad \psi_+(0)=1.
\]
Choose $v_0\in\mathbb R^2$ with $\norm{v_0}=1/2$ and a nonzero nonnegative function $\psi_-\in C_c^\infty(\mathbb R^2)$ satisfying
\[
\operatorname{supp}(\psi_-)\subset B_2(v_0,1/8).
\]
The supports of $\psi_+$ and $\psi_-$ are disjoint. Define
\[
c_\psi:=\frac{\int_{\mathbb R^2}\psi_+(v)\dd v}{\int_{\mathbb R^2}\psi_-(v)\dd v},\qquad \psi:=\psi_+-c_\psi\psi_-.
\]
Then
\[
\psi(0)=1,\qquad \int_{\mathbb R^2}\psi(v)\dd v=0.
\]
Finally, set
\[
\Phi(t,v):=\eta(t)\psi(v).
\]
It follows that
\[
\Phi(0,0)=1
\]
and, for every $t\in\mathbb R^d$,
\[
\int_{\mathbb R^2}\Phi(t,v)\dd v=\eta(t)\int_{\mathbb R^2}\psi(v)\dd v=0.
\]
The support choices also ensure that $\operatorname{supp}(\Phi)$ is contained in the unit ball of $\mathbb R^{d+2}$. Thus $\Phi$ has all the properties assumed in the proof.

\subsection{Uniform upper bound over the minimax class}
\label{app:uniform-upper}

To see that the constants in \Cref{thm:supnorm-rate} can be chosen uniformly
over the class, let $V_L:=\operatorname{Leb}_2(\OmegaL)$. For every $P$ in
the class, every $z\in[0,1]^d$, and every $u\in\OmegaL$, nonnegativity,
the mass bound, and the H\"older condition give
\[
V_L\lambda_{w,P}(z,u)
\leq
\int_{\OmegaL}\lambda_{w,P}(z,v)\dd v
+L_HV_L\operatorname{diam}(\OmegaL)^s
\leq M_w+L_HV_L\operatorname{diam}(\OmegaL)^s.
\]
Here the conditional mass bound first holds for almost every $z$; the
H\"older continuity of $\lambda_{w,P}$ extends it to every $z$ in the support
of the uniform design.
All remaining model constants in that theorem are fixed over the class:
the design is uniform on the unit cube, the weighted-mass envelope is $M_w$,
and the joint bias is at most $L_H(h_Z+h_U)^s$. The displayed bandwidths
satisfy \Cref{ass:kernels}, and substitution in \eqref{eq:supnorm-rate}
gives \eqref{eq:bounded-mass-uniform-upper}.

\subsection[Details and proofs for partial optimal transport]
{Details and proofs for \Cref{sec:pot-rate}}\label{app:pot}

For completeness, $\pi\in\adm(\nu,\xi)$ means that $\pi$ is a finite
positive measure on $\OmegaL\times\OmegaL$ such that, for every Borel set
$A\subseteq\OmegaLo$,
\[
\pi(A\times\OmegaL)=\nu(A),
\qquad
\pi(\OmegaL\times A)=\xi(A),
\]
with no marginal constraint on $\DeltaL$.

\begin{remark}[Choice of ground norm]\label{rem:ground-norm}
The ground norm in $\OT_q$ is Euclidean, following the partial-transport
convention of \citet{divol2021estimation,wu2024estimation}; persistence-diagram
Wasserstein and bottleneck distances are also frequently defined with the max
norm \citep{cohen2007stability,skraba2020wasserstein}. Let $\OT_q^{(2)}$ and
$\OT_q^{(\infty)}$ denote the corresponding partial-transport distances.
Since their admissible transports agree and
\[
\norm{x}_\infty\leq\norm{x}\leq\sqrt{2}\norm{x}_\infty,
\qquad x\in\R^2,
\]
we have
\[
\OT_q^{(\infty)}(\nu,\xi)
\leq \OT_q^{(2)}(\nu,\xi)
\leq \sqrt{2}\OT_q^{(\infty)}(\nu,\xi).
\]
Moreover,
\[
d_{\DeltaL,2}(u)=\frac{\pers(u)}{\sqrt{2}},
\qquad
d_{\DeltaL,\infty}(u)=\frac{\pers(u)}{2}.
\]
Thus changing the ground norm alters only fixed constants, numerical distance
values, and possibly optimal plans; it leaves the topology, assumptions, rate
exponents, and bandwidth exponents unchanged. We retain the Euclidean norm to
match the cited partial-transport framework and the geometry used in the
smoothing argument. This comparison does not extend the results to the
bottleneck case $q=\infty$.
\end{remark}

\begin{lemma}[A sufficient condition for the transport modulus]\label{lem:ot-modulus}
Suppose the conditional mean measures admit densities $\lambda_w(z,\cdot)$
and that, for some $s_Z\in(0,1]$ and $L_1<\infty$,
\[
\int_{\OmegaL}|\lambda_w(z,u)-\lambda_w(z',u)|\dd u
\leq L_1\norm{z-z'}^{s_Z}
\qquad(z,z'\in\calZ).
\]
Then $\omega_{\OT,q}(r)\leq C_{q,L}L_1r^{s_Z}$ for all $r>0$. In
particular, the H\"older condition of \Cref{cor:pot-holder-rate} holds with
$\beta=s_Z$. The $L^1$ condition holds whenever $\lambda_w$ satisfies the
pointwise H\"older condition of \Cref{cor:holder-rate}.
\end{lemma}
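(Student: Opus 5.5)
The plan is to bound the partial transport cost by an explicit, suboptimal admissible plan that uses the common part of the two measures. Fix $z,z'$ with $\norm{z-z'}\leq r$. Write $\nu=\Lambda_z^w$ and $\xi=\Lambda_{z'}^w$, with densities $f=\lambda_w(z,\cdot)$ and $g=\lambda_w(z',\cdot)$. Split the densities as
\[
f=(f\wedge g)+(f-g)_+,\qquad g=(f\wedge g)+(g-f)_+ .
\]
The plan $\pi$ is then built in three pieces:
\begin{enumerate}[label=(\roman*)]
\item the common part $(f\wedge g)\,\dd u$ is moved by the identity map, i.e.\ it is pushed onto the diagonal of $\OmegaL\times\OmegaL$, at zero cost;
\item the excess $(f-g)_+\,\dd u$ is sent to the orthogonal projection $p(u)\in\DeltaL$ of each point $u$;
\item the deficit $(g-f)_+\,\dd u$ is brought from the projections $p(u)$.
\end{enumerate}
Since $\OmegaL$ is the triangle $0\leq b\leq d\leq L$, the Euclidean projection of $u$ onto the full diagonal line lands at $((b+d)/2,(b+d)/2)$, which lies in $\DeltaL$. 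So every piece of the plan is supported in $\OmegaL\times\OmegaL$.

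Next I check admissibility. The off-diagonal marginals of $\pi$ are exactly $\nu$ and $\xi$ on Borel subsets of $\OmegaLo$, because the diagonal-projection pieces only place mass on $\DeltaL$ in the other coordinate. The boundary $\DeltaL$ has Lebesgue measure zero, so it does not affect these marginals. Hence $\pi\in\adm(\nu,\xi)$.

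The cost is then bounded by
\[
\OT_q^q(\nu,\xi)\leq \int_{\OmegaL}|f-g|(u)\,d_{\DeltaL,2}(u)^q\dd u
\leq \Bigl(\tfrac{L}{\sqrt2}\Bigr)^q \int_{\OmegaL}|f-g|\dd u
\leq \Bigl(\tfrac{L}{\sqrt2}\Bigr)^q L_1\norm{z-z'}^{s_Z}.
\]
Taking the supremum over $\norm{z-z'}\leq r$ gives $\omega_{\OT,q}(r)\leq C_{q,L}L_1r^{s_Z}$, with $C_{q,L}=(L/\sqrt2)^q$. This is exactly the Hölder condition of \Cref{cor:pot-holder-rate} with $\beta=s_Z$ and $L_{\OT}=C_{q,L}L_1$.

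For the last claim, the pointwise condition of \Cref{cor:holder-rate} gives $|\lambda_w(z,u)-\lambda_w(z',u)|\leq L_Z\norm{z-z'}^{s_Z}$ for every $u$, by taking $u=u'$. Integrating over $\OmegaL$ yields the $L^1$ condition with $L_1=L_Z\operatorname{Leb}_2(\OmegaL)=L_ZL^2/2$.

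The only delicate step is the admissibility bookkeeping: the marginal constraints apply only on $\OmegaLo$, and projected mass must stay inside $\DeltaL$. Both are handled by the triangle geometry and the null-measure remark above, so I expect no real obstacle.
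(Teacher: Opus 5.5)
Your proof is correct and follows the same route as the paper. The paper invokes the coupling from the proof of Theorem~3.1 of Wu, Kim and Rinaldo to obtain $\OT_q^q(f\,\dd u,g\,\dd u)\le\int_{\OmegaL}d_{\DeltaL,2}(u)^q|f(u)-g(u)|\,\dd u$, then uses $d_{\DeltaL,2}\le L/\sqrt2$ and integrates the pointwise H\"older condition, exactly as you do; the only difference is that you build that coupling explicitly (common part $f\wedge g$ by the identity, $(f-g)_+$ to and $(g-f)_+$ from the diagonal projection) and check admissibility yourself, which makes the argument self-contained.
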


\begin{proof}[Proof of \Cref{lem:ot-modulus}]
The coupling constructed in the proof of Wu, Kim and Rinaldo~\cite[Theorem~3.1]{wu2024estimation} gives, for finite measures on $\OmegaL$ with densities $f$ and $g$,
\[
\OT_q^q(f\dd u,g\dd u)\leq\int_{\OmegaL}d_{\DeltaL,2}(u)^q|f(u)-g(u)|\dd u.
\]
Apply this inequality with $f(u)=\lambda_w(z,u)$ and $g(u)=\lambda_w(z',u)$. Since $d_{\DeltaL,2}(u)\leq L/\sqrt{2}$,
\[
\OT_q^q(\Lambda_z^w,\Lambda_{z'}^w)\leq\left(\frac{L}{\sqrt{2}}\right)^q\int_{\OmegaL}|\lambda_w(z,u)-\lambda_w(z',u)|\dd u\leq\left(\frac{L}{\sqrt{2}}\right)^qL_1\norm{z-z'}^{s_Z}.
\]
Taking the supremum over $\norm{z-z'}\leq r$ proves the claim. Under the pointwise H\"older condition in \Cref{cor:holder-rate},
\[
|\lambda_w(z,u)-\lambda_w(z',u)|\leq L_Z\norm{z-z'}^{s_Z},
\]
and integration over $\OmegaL$ gives $L_1=L_Z\operatorname{Vol}(\OmegaL)$.
\end{proof}

\paragraph{Boundary normalization.}
The kernel $\kappa_{U,h}$ in \eqref{eq:normalized-kernels} is normalized over
the source variable $v$ for each evaluation point $u$. This reproduces
constant functions at the boundary, as required by the sup-norm argument, but
need not preserve mass when a source point $v$ is spread over target locations
$u$. The resulting defect is precisely $\mathfrak b_{U,q}(h)$ defined in
\Cref{sec:pot-rate}. A source-normalized smoothing kernel would instead have
$r_{U,h}\equiv1$ and zero defect.

The defect can arise only within order $h$ of the boundary of $\OmegaL$. Its
diagonal part is harmless under bounded weighted mass because
\[
\int_{\{d_{\DeltaL,2}(v)\leq Ch\}}
d_{\DeltaL,2}(v)^q\dd\Lambda_z^w(v)
\leq (Ch)^q\Lambda_z^w(\OmegaL).
\]
Thus only the artificial outer boundary
$\partial_{\mathrm{out}}\OmegaL:=\partial\OmegaL\setminus\DeltaL$ requires
control. A sufficient condition for
$\mathfrak b_{U,q}(h_{U,n})\to0$ along every sequence $h_{U,n}\downarrow0$
is
\[
\lim_{h\downarrow0}\sup_{z\in\calZ}
\int_{\{v\in\OmegaL:\dist(v,\partial_{\mathrm{out}}\OmegaL)\leq h\}}
d_{\DeltaL,2}(v)^q\dd\Lambda_z^w(v)=0.
\]

Recall the kernel-weight notation of \Cref{lem:local-effective-sample}:
$a_i(z)=\kappa_{Z,h_Z}(z,Z_i)/\sum_j\kappa_{Z,h_Z}(z,Z_j)$ on the event where the denominator is positive.

\begin{lemma}[Scalar local Nadaraya--Watson average]
\label{lem:scalar-local-average}
Let
\[
(Z_i,B_i),\qquad i=1,\ldots,n,
\]
be independent and identically distributed pairs, where the $Z_i$ have the
design distribution considered above and the scalar responses satisfy
\[
0\le B_i\le B
\qquad\text{almost surely},
\]
for some constant $B<\infty$. Suppose moreover that
\[
\sup_{z\in\calZ}\E[B_i\mid Z_i=z]\le b_*
\]
for some $b_*\ge0$. Under the assumptions of
\Cref{lem:denominator-concentration}, for every fixed
$\delta\in(0,1)$ and all sufficiently large $n$, with probability at least
$1-\delta$,
\[
\sup_{z\in\calZ}
\sum_{i=1}^n a_i(z)B_i
\le
C\bigl(
b_*+r_{Z,n}(h_Z,\delta)
\bigr),
\]
where $C$ may depend on $B$.
\end{lemma}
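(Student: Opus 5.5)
The plan is to write the weighted average as a ratio and treat its numerator exactly like the denominator in \Cref{lem:denominator-concentration}, with the bounded scalar $B_i$ replacing the kernelized response. Put
\[
\widehat N^B_n(z):=\frac1n\sum_{i=1}^n\kappa_{Z,h_Z}(z,Z_i)B_i,
\qquad
N^B_h(z):=\E\widehat N^B_n(z),
\]
so that $\sum_i a_i(z)B_i=\widehat N^B_n(z)/\widehat D_n(z)$ on the event where the denominator is positive. Apply \Cref{lem:denominator-concentration} with failure probability $\delta/2$. For large $n$, $Cr_{Z,n}(h_Z,\delta)\le c_Z/2$, so on $\mathcal E_Z$ we have $\inf_z\widehat D_n(z)\ge c_Z/2$, and it suffices to bound $\sup_z\widehat N^B_n(z)$.

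\emph{Deterministic part.} Condition on $Z$ and use $p_Z\le C_Z$ together with the normalization \eqref{eq:kernel-normalization}:
\[
N^B_h(z)=\int_\calZ\kappa_{Z,h_Z}(z,z')\,\E[B\mid Z=z']\,p_Z(z')\dd z'\le C_Zb_*.
\]

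\emph{Stochastic part.} Apply \Cref{prop:uniform-empirical-process} to the class $g_z(\zeta,b):=\kappa_{Z,h_Z}(z,\zeta)b-N^B_h(z)$ for $z\in\calZ$, with failure probability $\delta/2$.
\begin{itemize}
\item Envelope: $B_Z:=CBh_Z^{-d}$, by \eqref{eq:kernel-norm-bounds}.
\item Variance: $Pg_z^2\le B^2\E\kappa_{Z,h_Z}(z,Z)^2\le Ch_Z^{-d}$, by \eqref{eq:covariate-kernel-second-moment}.
\item Entropy: the Lipschitz bound \eqref{eq:kappa-z-lipschitz} gives $|g_z-g_{z'}|\le CBh_Z^{-d-1}\norm{z-z'}$ uniformly. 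This yields the same polynomial covering bound \eqref{eq:denominator-entropy} with $v_0=d$ and $A_Z=Ch_Z^{-1}$.
\item Continuity in $z$ and separability hold as before.
\end{itemize}
The logarithmic factor is then $\le C\log\{Cn/(\delta h_Z^d)\}$, so with probability at least $1-\delta/2$,
\[
\sup_z|\widehat N^B_n(z)-N^B_h(z)|\le Cr_{Z,n}(h_Z,\delta).
\]
The constant depends on $B$, which the statement allows.

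\emph{Combination.} On the intersection of the two events, which has probability at least $1-\delta$,
\[
\sup_z\sum_ia_i(z)B_i\le\frac{2}{c_Z}\bigl(C_Zb_*+Cr_{Z,n}(h_Z,\delta)\bigr),
\]
which is the claim.

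The step needing most care is keeping the bound in the form $b_*+r_{Z,n}$ rather than $B+r_{Z,n}$: the response must enter only through its conditional mean in the bias term, and through $B$ only in the stochastic constant. This is why I center the numerator directly instead of bounding $B_i\le B$ inside the average. Everything else repeats the denominator argument. A simpler alternative for the stochastic term, a net plus Bernstein \eqref{eq:bernstein-form} and a union bound, would also work; the $\eta$-net cardinality $C\eta^{-d}$ with $\eta\asymp h_Z r_{Z,n}$ gives the same logarithm.
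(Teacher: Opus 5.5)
Your proposal is correct and follows essentially the same route as the paper. You use the ratio representation $\widehat N_B/\widehat D_n$, a conditioning bound on the population numerator, and \Cref{prop:uniform-empirical-process} for the numerator class (envelope $CBh_Z^{-d}$, variance $Ch_Z^{-d}$, $d$-dimensional polynomial entropy), together with the denominator event of \Cref{lem:denominator-concentration}, each at level $\delta/2$. The only cosmetic difference is that you bound the population numerator by $C_Zb_*$ and use $\widehat D_n\ge c_Z/2$, whereas the paper bounds it by $b_*D_h(z)$ and uses $\sup_z D_h(z)/\widehat D_n(z)\le C$.
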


\begin{proof}
Define
\[
\widehat N_B(z)=\frac1n\sum_{i=1}^n\kappa_{Z,h_Z}(z,Z_i)B_i,
\qquad
N_B(z)=\E\widehat N_B(z).
\]
Conditioning on $Z_i$ and using nonnegativity gives
\[
N_B(z)\leq b_*D_h(z).
\]
Apply \Cref{prop:uniform-empirical-process} to the centered class generated by
\[
f_z(\zeta,b)=\kappa_{Z,h_Z}(z,\zeta)b,
\qquad z\in\calZ.
\]
The kernel bounds give an envelope $CBh_Z^{-d}$ and variance $CB^2h_Z^{-d}$. The first-argument Lipschitz bound for $\kappa_{Z,h_Z}$ gives, for every probability measure $Q$,
\[
N\left(\{f_z-Pf_z:z\in\calZ\},L^2(Q),\eta\right)\leq\left(\frac{CBh_Z^{-d-1}}{\eta}\right)^d.
\]
Therefore, with probability at least $1-\delta/2$,
\[
\sup_{z\in\calZ}|\widehat N_B(z)-N_B(z)|\leq C_Br_{Z,n}(h_Z,\delta).
\]
On the denominator event of Lemma A1, also taken with failure probability $\delta/2$,
\[
\inf_z\widehat D_n(z)\geq\frac{c_Z}{2},
\qquad
\sup_z\frac{D_h(z)}{\widehat D_n(z)}\leq C.
\]
Since $\sum_i a_i(z)B_i=\widehat N_B(z)/\widehat D_n(z)$,
\[
\sup_{z\in\calZ}\sum_{i=1}^na_i(z)B_i\leq C\left(b_*+r_{Z,n}(h_Z,\delta)\right).
\]
A union bound completes the proof.
\end{proof}

\begin{lemma}[Smoothing in partial optimal transport]
\label{lem:smoothing-pot}
Assume that $K_U$ satisfies the kernel conditions imposed in
\Cref{ass:kernels}. Let $\nu$ be a finite measure on $\OmegaL$ and define
\[
T_{U,h}\nu(A)
:=
\int_{\OmegaL}\int_A
\kappa_{U,h}(u,v)\dd u\dd\nu(v).
\]
Then, for all sufficiently small $h$,
\[
\OT_q^q(T_{U,h}\nu,\nu)
\leq
C\left(
h^q\nu(\OmegaL)
+
\int_{\OmegaL}
|r_{U,h}(v)-1|
d_{\DeltaL,2}(v)^q
\dd\nu(v)
\right),
\]
where $C$ depends only on $q$ and the kernel constants.
\end{lemma}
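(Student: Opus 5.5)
The idea is to write down an explicit admissible transport plan between $T_{U,h}\nu$ and $\nu$, built source point by source point, and then bound its cost directly. For each $v\in\OmegaL$ let
\[
m_v(\dd u):=\kappa_{U,h}(u,v)\,\dd u ,
\]
so that $T_{U,h}\nu=\int m_v\,\dd\nu(v)$ and $m_v(\OmegaL)=r_{U,h}(v)$. By the compact support in \Cref{ass:kernels}, $m_v$ is supported in $B(v,h)\cap\OmegaL$. By \Cref{obs:kernel-bounds}, for all sufficiently small $h$,
\[
0\le r_{U,h}(v)\le \int_{\OmegaL}\frac{h^{-2}K_U((v-u)/h)}{\underline c_U}\,\dd u\le \underline c_U^{-1}.
\]
Hence $|r_{U,h}(v)-1|\le C$ uniformly in $v$. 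The mismatch between $m_v$ and $\delta_v$ therefore has two parts: a local displacement of size at most $h$, and a mass defect $|r_{U,h}(v)-1|$. Both will be absorbed using the diagonal.

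\emph{Construction of the plan.} Write $\rho(v):=\min\{1,r_{U,h}(v)\}$ and use the convention $0/0=0$. The plan has three pieces.
\begin{enumerate}
\item \emph{Matched part.} Let $\pi_1:=\int \bigl(\rho(v)/r_{U,h}(v)\bigr)\,m_v\otimes\delta_v\,\dd\nu(v)$. It carries mass $\rho(v)$ from the smoothed copy of $v$ onto $v$. Every pair it charges satisfies $\norm{u-v}\le h$, so its cost is at most $h^q\nu(\OmegaL)$.
\item \emph{Excess mass.} Where $r_{U,h}(v)>1$, the leftover $\bigl(1-1/r_{U,h}(v)\bigr)m_v$ has mass $r_{U,h}(v)-1$. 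Send it to the diagonal, each $u$ to its nearest point $\mathrm p(u)\in\DeltaL$. This is $\pi_2:=\int \bigl(1-1/r_{U,h}(v)\bigr)_+\,(\mathrm{id},\mathrm p)_\#m_v\,\dd\nu(v)$. Since $d_{\DeltaL,2}$ is $1$-Lipschitz, $d_{\DeltaL,2}(u)^q\le (d_{\DeltaL,2}(v)+h)^q\le 2^{q-1}\bigl(d_{\DeltaL,2}(v)^q+h^q\bigr)$ on the support of $m_v$. So the cost of $\pi_2$ is at most $2^{q-1}\int (r_{U,h}-1)_+\bigl(d_{\DeltaL,2}^q+h^q\bigr)\,\dd\nu$.
\item \emph{Deficit.} Where $r_{U,h}(v)<1$, the unmatched mass $1-r_{U,h}(v)$ of $\delta_v$ is taken from the diagonal point $\mathrm p(v)$. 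This is $\pi_3:=\int (1-r_{U,h}(v))_+\,\delta_{(\mathrm p(v),v)}\,\dd\nu(v)$, with cost $\int(1-r_{U,h})_+\,d_{\DeltaL,2}^q\,\dd\nu$.
\end{enumerate}

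\emph{Bounding the cost.} Summing the three costs and using $|r_{U,h}-1|\le C$ to absorb the term $h^q\int|r_{U,h}-1|\,\dd\nu\le Ch^q\nu(\OmegaL)$ gives exactly the claimed bound. The constant depends only on $q$ and $\underline c_U$.

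\emph{Admissibility.} This is the step needing the most care, and I would check it last and explicitly. For Borel $A\subseteq\OmegaLo$ we need
\[
\pi(A\times\OmegaL)=T_{U,h}\nu(A)\quad\text{and}\quad\pi(\OmegaL\times A)=\nu(A),\qquad \pi:=\pi_1+\pi_2+\pi_3 .
\]
\begin{itemize}
\item \emph{First marginal.} $\pi_1$ and $\pi_2$ together give $\bigl(\rho/r_{U,h}+(1-1/r_{U,h})_+\bigr)m_v=m_v$ for each $v$. The first coordinate of $\pi_3$ lies on $\DeltaL$, so it does not enter the constraint.
\item \emph{Second marginal.} $\pi_1$ and $\pi_3$ together give $(\rho+(1-r_{U,h})_+)\delta_v=\delta_v$ for each $v$. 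The second coordinate of $\pi_2$ lies on $\DeltaL$.
\item \emph{Mass of $\nu$ on the diagonal.} If $\nu$ charges $\DeltaL$, the marginal constraints ignore those points. Their contribution to every cost term is nonnegative and bounded by the same expressions (with $d_{\DeltaL,2}=0$), so they cause no difficulty.
\end{itemize}
Measurability of $v\mapsto m_v$, needed for the mixtures defining $\pi_1$, $\pi_2$, $\pi_3$ to be well-defined finite measures, follows from joint measurability of $\kappa_{U,h}$ and Tonelli's theorem. Finally, finiteness of $\pi$ follows from $\nu(\OmegaL)<\infty$ and $r_{U,h}\le C$.
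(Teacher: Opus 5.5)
Your proposal is correct and is essentially the paper's proof: the paper builds the same per-point plan (transport of $\min\{1,r_{U,h}(v)\}$ units within distance $h$, creation or deletion of the mass defect at the diagonal through the nearest-point projection with $d_{\DeltaL,2}(u)\le d_{\DeltaL,2}(v)+h$), integrates it against $\nu$, and absorbs the $(r_{U,h}-1)h^q$ term using the uniform bound on $r_{U,h}$. The differences are cosmetic: you orient the plan from $T_{U,h}\nu$ to $\nu$ rather than transposing at the end, and you bound $r_{U,h}\le\underline c_U^{-1}$ through the normalizer rather than through the sup-norm kernel bound times the area of an $h$-ball.
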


\begin{proof}
For $v\in\OmegaL$, define the finite measure
\[
K_{h,v}(\dd u):=\kappa_{U,h}(u,v)\dd u
\]
and write
\[
r(v):=K_{h,v}(\OmegaL)=r_{U,h}(v).
\]
Let $P_{\DeltaL}(u)$ denote the Euclidean projection of $u$ onto
$\DeltaL$, and abbreviate
\[
d(v):=d_{\DeltaL,2}(v).
\]

By the kernel support assumption,
\[
\operatorname{supp}(K_{h,v})
\subseteq
\{u\in\OmegaL:\norm{u-v}\leq h\}.
\]
Although \eqref{eq:kernel-normalization} normalizes the kernel in its second
argument, the pointwise kernel bound and the support condition imply
\begin{align*}
0\leq r(v)
&=
\int_{\OmegaL}\kappa_{U,h}(u,v)\dd u\\
&\leq
Ch^{-2}
\operatorname{Leb}
\{u\in\OmegaL:\norm{u-v}\leq h\}\\
&\leq C.
\end{align*}
Thus $r(v)$ is bounded uniformly in $v$ and $h$.

We now construct, for each $v$, a partial transport between the unit mass
$\delta_v$ and the kernel measure $K_{h,v}$.

If $0\leq r(v)\leq1$, set
\[
\pi_v
:=
\delta_v\otimes K_{h,v}
+
(1-r(v))
\delta_{(v,P_{\DeltaL}(v))}.
\]
The first term transports the mass $r(v)$ from $v$ to the kernel bump,
while the second deletes the remaining mass $1-r(v)$ at the diagonal.

If $r(v)>1$, set
\begin{align*}
\pi_v(\dd x,\dd y)
&:=
\delta_v(\dd x)\frac{K_{h,v}(\dd y)}{r(v)}\\
&\quad+
\left(1-\frac1{r(v)}\right)
\delta_{P_{\DeltaL}(y)}(\dd x)K_{h,v}(\dd y).
\end{align*}
The first term transports the one available unit at $v$ to the normalized
kernel bump, while the second creates the excess mass $r(v)-1$ from the
diagonal.

Indeed, for every Borel set $A\subseteq\OmegaLo$,
\[
\pi_v(A\times\OmegaL)=\delta_v(A),
\qquad
\pi_v(\OmegaL\times A)=K_{h,v}(A).
\]
More explicitly, if $r(v)\leq1$, the first marginal is exactly
$\delta_v$, while the second full marginal is
\[
K_{h,v}+(1-r(v))\delta_{P_{\DeltaL}(v)};
\]
the additional mass is therefore supported on $\DeltaL$. If $r(v)>1$,
the second marginal is exactly $K_{h,v}$, while the first full marginal is
\[
\delta_v+
\left(1-\frac1{r(v)}\right)
(P_{\DeltaL})_{\#}K_{h,v},
\]
whose additional mass, of total size $r(v)-1$, is likewise supported on
$\DeltaL$. Hence the required marginals agree with $\delta_v$ and
$K_{h,v}$ on $\OmegaLo$, which is precisely the admissibility condition
for partial transport.

Therefore, defining
\[
\pi:=\int_{\OmegaL}\pi_v\,\dd\nu(v),
\]
we obtain, for every Borel set $A\subseteq\OmegaLo$,
\[
\pi(A\times\OmegaL)
=
\int_{\OmegaL}\pi_v(A\times\OmegaL)\,\dd\nu(v)
=
\int_{\OmegaL}\delta_v(A)\,\dd\nu(v)
=
\nu(A),
\]
and
\begin{align*}
\pi(\OmegaL\times A)
&=\int_{\OmegaL}\pi_v(\OmegaL\times A)\,\dd\nu(v)\\
&=\int_{\OmegaL}K_{h,v}(A)\,\dd\nu(v)\\
&=\int_{\OmegaL}\int_A
\kappa_{U,h}(u,v)\,\dd u\,\dd\nu(v)\\
&=T_{U,h}\nu(A).
\end{align*}
Thus the first marginal of $\pi$, restricted to $\OmegaLo$, is $\nu$,
while its second marginal, restricted to $\OmegaLo$, is $T_{U,h}\nu$.
Any additional marginal mass introduced by the plans $\pi_v$ is supported
on $\DeltaL$, where the marginal constraints of partial optimal transport
are not imposed. Hence $\pi$ is an admissible partial transport from
$\nu$ to $T_{U,h}\nu$.

It remains to bound its cost. Suppose first that $0\leq r(v)\leq1$.
Since $K_{h,v}$ is supported in the ball of radius $h$ around $v$,
\begin{align*}
\int_{\OmegaL\times\OmegaL}
\norm{x-y}^q\dd\pi_v(x,y)
&=
\int_{\OmegaL}\norm{u-v}^qK_{h,v}(\dd u)
+
(1-r(v))d(v)^q\\
&\leq
r(v)h^q
+
(1-r(v))d(v)^q\\
&\leq
h^q+|r(v)-1|d(v)^q.
\end{align*}

Suppose now that $r(v)>1$. The cost of the directly transported part is
\[
\frac1{r(v)}
\int_{\OmegaL}\norm{u-v}^qK_{h,v}(\dd u)
\leq h^q.
\]
The remaining mass is created from the diagonal and has cost
\[
\left(1-\frac1{r(v)}\right)
\int_{\OmegaL}d(u)^qK_{h,v}(\dd u).
\]
The distance to the closed set $\DeltaL$ is $1$-Lipschitz. Hence, on the
support of $K_{h,v}$,
\[
d(u)\leq d(v)+\norm{u-v}\leq d(v)+h.
\]
Using
\[
(x+y)^q\leq2^{q-1}(x^q+y^q),
\]
we obtain
\begin{align*}
\left(1-\frac1{r(v)}\right)
\int_{\OmegaL}d(u)^qK_{h,v}(\dd u)
&\leq
C\left(1-\frac1{r(v)}\right)
r(v)\bigl(d(v)^q+h^q\bigr)\\
&=
C(r(v)-1)\bigl(d(v)^q+h^q\bigr).
\end{align*}
Since $r(v)$ is uniformly bounded,
\[
(r(v)-1)h^q\leq Ch^q.
\]
Thus, also when $r(v)>1$,
\[
\int_{\OmegaL\times\OmegaL}
\norm{x-y}^q\dd\pi_v(x,y)
\leq
C\left(
h^q+|r(v)-1|d(v)^q
\right).
\]

Integrating this pointwise estimate with respect to $\nu(\dd v)$ gives
\begin{align*}
\int_{\OmegaL\times\OmegaL}
\norm{x-y}^q\dd\pi(x,y)
&\leq
C\int_{\OmegaL}
\left(
h^q+|r(v)-1|d(v)^q
\right)
\dd\nu(v)\\
&=
C\left(
h^q\nu(\OmegaL)
+
\int_{\OmegaL}
|r_{U,h}(v)-1|
d_{\DeltaL,2}(v)^q
\dd\nu(v)
\right).
\end{align*}
The transpose of $\pi$ is an admissible transport from $T_{U,h}\nu$ to
$\nu$ and has the same cost. Taking the infimum over admissible transports
proves the claimed inequality.
\end{proof}

The next lemma adapts the multiscale construction in the supplementary
Lemma~4 of \citet{divol2021estimation} to arbitrary finite weighted measures
on the bounded persistence window.

\begin{lemma}[Multiscale partial-transport bound]\label{lem:multiscale}
Let $\nu,\xi$ be finite positive measures on $\OmegaL$, and let
$Q\supseteq\OmegaL$ be a fixed square of side $L$.  For each $k\geq0$, let
$\mathcal D_k$ be the standard nested dyadic partition of $Q$ into $4^k$
measurable squares of side $L2^{-k}$, with boundary points assigned according
to a fixed convention, and define
\[
\mathcal P_k
:=\{D\cap\OmegaL:D\in\mathcal D_k,\ D\cap\OmegaL\neq\varnothing\}.
\]
Then, for every $q\in[1,\infty)$ and every integer $k^*\geq0$,
\[
\OT_q^q(\nu,\xi)
\leq
C_{q,L}\left(
\sum_{k=0}^{k^*}2^{-kq}\sum_{c\in\mathcal P_k}|\nu(c)-\xi(c)|
\;+
2^{-k^*q}\bigl(\nu(\OmegaL)\wedge\xi(\OmegaL)\bigr)
\right).
\]
\end{lemma}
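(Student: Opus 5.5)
We will build an explicit admissible plan by a bottom-up (fine-to-coarse) greedy matching along the dyadic tree, in the spirit of the multiscale construction of \citet{divol2021estimation}, and then bound its cost scale by scale. Because partial transport allows mass to be created or deleted on $\DeltaL$, we never need $\nu$ and $\xi$ to have equal mass. Every point of $\OmegaL$ lies within distance $L/\sqrt2$ of $\DeltaL$, so deleting any leftover mass at the diagonal costs at most $(L/\sqrt2)^q$ per unit. This term is absorbed at level $k=0$: $\mathcal P_0$ consists of the single cell $\OmegaL$, so the $k=0$ summand equals $|\nu(\OmegaL)-\xi(\OmegaL)|$.

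\emph{Step 1: finest level.} In each finest cell $c\in\mathcal P_{k^*}$, transport the common mass $\nu(c)\wedge\xi(c)$ inside $c$. We use a product-type coupling between the normalized restrictions $\nu|_c$ and $\xi|_c$, scaled by $\nu(c)\wedge\xi(c)$. Every displacement is at most $\operatorname{diam}(c)=\sqrt2L2^{-k^*}$, so this step costs at most $C2^{-k^*q}\sum_{c\in\mathcal P_{k^*}}(\nu(c)\wedge\xi(c))\le C2^{-k^*q}(\nu(\OmegaL)\wedge\xi(\OmegaL))$. This gives the remainder term. After this step, each finest cell carries an unmatched excess of size $|\nu(c)-\xi(c)|$, all of it belonging to $\nu$ or all to $\xi$.

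\emph{Step 2: induction upward.} For $k=k^*-1,\dots,0$ and each parent cell $c\in\mathcal P_k$, pool the unmatched $\nu$-excess and $\xi$-excess coming from its (at most four) children. Match as much as possible inside $c$, again proportionally, at cost at most $\operatorname{diam}(c)^q=C2^{-kq}$ per unit. The key bookkeeping identity is that, after matching inside $c$, the remaining excess has size exactly $|\nu(c)-\xi(c)|$, because matching preserves the signed difference $\nu(c)-\xi(c)$. The mass matched at level $k$ inside $c$ is at most the total child excess, $\sum_{c'\subset c,\,c'\in\mathcal P_{k+1}}|\nu(c')-\xi(c')|$. Summing over cells, level $k$ costs at most $C2^{-kq}\sum_{c'\in\mathcal P_{k+1}}|\nu(c')-\xi(c')|$, which is at most $C2^{q}\cdot2^{-(k+1)q}\sum_{\mathcal P_{k+1}}|\cdot|$. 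This is exactly a term of the stated sum with a modified constant. After level $0$, the leftover $|\nu(\OmegaL)-\xi(\OmegaL)|$ is sent to or created from $\DeltaL$ at bounded cost, as noted above.

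\emph{Step 3: admissibility.} The sum of all the pieces, including the diagonal deletions, is a finite positive measure on $\OmegaL\times\OmegaL$. Its marginals on $\OmegaLo$ are $\nu$ and $\xi$, because each unit of $\nu$-mass is either matched once or sent to the diagonal, and symmetrically for $\xi$. Hence its cost bounds $\OT_q^q(\nu,\xi)$.

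The main obstacle is making the ``unmatched excess'' rigorous as measures rather than numbers: at each level we must specify which sub-measure of the child excesses gets matched. We will do this by proportional thinning, taking the matched part of each excess measure to be a fixed fraction of it, so every intermediate object remains a positive measure dominated by $\nu$ or $\xi$ and supported in the cell. We also have to check that boundary conventions and cells truncated by $\OmegaL$ do not affect the diameter bounds, which they do not, since truncation only shrinks cells.
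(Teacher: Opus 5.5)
Your proposal is correct. The plan you build is admissible with the claimed cost structure. The finest-level product couplings give the $2^{-k^*q}\bigl(\nu(\OmegaL)\wedge\xi(\OmegaL)\bigr)$ remainder. Preservation of the signed difference bounds the mass matched at level $k$ by $\sum_{c'\in\mathcal P_{k+1}}|\nu(c')-\xi(c')|$, at cost at most $(\sqrt2L2^{-k})^q$ per unit. The final deletion to or creation from $\DeltaL$ costs at most $(L/\sqrt2)^q|\nu(\OmegaL)-\xi(\OmegaL)|$, which is the $k=0$ summand.

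The paper does not construct a plan. It specializes Supplementary Lemma~4 of \citet{divol2021estimation}, itself built on the multiresolution argument of \citet{singh2018minimax}. It extends $\nu,\xi$ by zero to $Q$, notes that intersecting cells with $\OmegaL$ preserves cell masses and the diameter bound $\sqrt2L2^{-k}$, and absorbs the cited bound's separate term $|\nu(\OmegaL)-\xi(\OmegaL)|$ into $S_0$, exactly as you do.

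Your bottom-up matching with proportional thinning is essentially the construction behind that citation, written out. What it buys is a self-contained check of two adaptations the paper asserts only by inspecting the reference: the argument uses no integer-valued atoms, since every intermediate object is a sub-measure of $\nu$ or $\xi$ restricted to a cell, and leftover mass can be moved to or from the diagonal at bounded cost. The citation route is shorter, but it leaves those checks, and the dyadic index shift absorbed into $C_{q,L}$, to the reader.
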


\begin{proof}
This is a direct specialization of Supplementary Lemma~4 of
\citet{divol2021estimation}, whose multiresolution argument is in turn based
on Lemma~6 of \citet{singh2018minimax}. Extend \(\nu\) and \(\xi\) by zero
from \(\OmegaL\) to \(Q\), use \(Q\) as the single bounded block in that
lemma, and take its nested partitions to be
\(\{\mathcal D_k\}_{k=0}^{k^*}\). Intersecting the cells with \(\OmegaL\)
does not change any cell mass, and every resulting cell has diameter at
most \(\sqrt{2}L2^{-k}\).

With \(S_k=\sum_{c\in\mathcal P_k}|\nu(c)-\xi(c)|\), the cited lemma
therefore gives
\[
\OT_q^q(\nu,\xi)
\leq C_{q,L}\left\{
2^{-k^*q}\bigl(\nu(\OmegaL)\wedge\xi(\OmegaL)\bigr)
+|\nu(\OmegaL)-\xi(\OmegaL)|
+\sum_{k=1}^{k^*}2^{-kq}S_k
\right\}.
\]
The result applies to arbitrary finite positive measures; its construction
uses only restrictions, rescaling, and couplings of submeasures, not
integer-valued atoms. Since
\(S_0=|\nu(\OmegaL)-\xi(\OmegaL)|\), the last display is the asserted bound,
after enlarging \(C_{q,L}\) to absorb the harmless dyadic index shift in the
cited formulation.
\end{proof}

\paragraph{Relation to unconditional empirical-measure rates.}
For comparison, specialize \citet[Theorem~1]{divol2021estimation} to zeroth
persistence moment and transport exponent $q$. If $N$ i.i.d.\ persistence
measures are supported in their fixed window $A_{L_0}$ and each has total mass
at most $M$, their equally weighted empirical mean $\bar\mu_N$ satisfies
\[
\E\OT_q^q(\bar\mu_N,\E\mu)
\leq CML_0^q\{N^{-1/2}+a_q(N)N^{-q}\},
\qquad
a_q(N)=
\begin{cases}
1,&q>1,\\
\log N,&q=1.
\end{cases}
\]
Our local estimator requires a conditional high-probability extension to
independent measures whose conditional laws may differ and whose weights depend
on the covariates. The next two lemmas establish this extension: for
$N_{\mathrm{eff}}(a(z))=(\sum_i a_i(z)^2)^{-1}\gtrsim nh_Z^d$, the mixture
$\sum_i a_i(z)\mu_i^w$ concentrates around its conditional mean in
$\OT_q^q$ at rate
$\chi_q(N_{\mathrm{eff}})N_{\mathrm{eff}}^{-1/2}$, uniformly in $z$.

\begin{lemma}[Conditional weighted empirical-measure bound]
\label{lem:weighted-epd}
Let $1\leq q<\infty$ and define
\[
\chi_q(N):=
\begin{cases}
1, & q>1,\\
\log(2+N), & q=1.
\end{cases}
\]
Let $\mathcal G$ be a sigma-field, and let
$\nu_1,\ldots,\nu_n$ be random finite measures on $\OmegaL$ that are
conditionally independent given $\mathcal G$. Assume that, almost surely,
\[
\nu_i(\OmegaL)\leq M,
\qquad i=1,\ldots,n.
\]
For each $i$, let $\Lambda_i$ denote the conditional mean measure of $\nu_i$
given $\mathcal G$, namely the $\mathcal G$-measurable random measure
satisfying
\[
\Lambda_i(A)
=
\E\bigl[\nu_i(A)\mid\mathcal G\bigr]
\]
for every Borel set $A\subseteq\OmegaL$.

Let $a_1,\ldots,a_n$ be nonnegative $\mathcal G$-measurable random
variables such that
$\sum_{i=1}^n a_i=1
$
almost surely, and define
\[
\nu_a:=\sum_{i=1}^n a_i\nu_i,
\qquad
\Lambda_a:=\sum_{i=1}^n a_i\Lambda_i,
\qquad
N_{\mathrm{eff}}(a):=
\left(\sum_{i=1}^n a_i^2\right)^{-1}.
\]
Then there exists a constant $C<\infty$, depending only on $q$, $M$, and
$L$, such that, for every $\epsilon\in(0,1)$, almost surely,
\[
\Pp\left(
\left.
\OT_q^q(\nu_a,\Lambda_a)
\leq
C\left\{
\frac{\chi_q(N_{\mathrm{eff}}(a))}
{\sqrt{N_{\mathrm{eff}}(a)}}
+
\sqrt{
\frac{\log(1/\epsilon)}
{N_{\mathrm{eff}}(a)}
}
\right\}
\,\right|\,\mathcal G
\right)
\geq
1-\epsilon.
\]
\end{lemma}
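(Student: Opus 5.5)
Everything is conditional on $\mathcal G$, so I fix a realization of $\mathcal G$ and treat $a_i$ and $\Lambda_i$ as deterministic, with $\nu_1,\ldots,\nu_n$ independent. The plan has two parts: a bound on $\E[\OT_q^q(\nu_a,\Lambda_a)\mid\mathcal G]$ via the multiscale inequality of \Cref{lem:multiscale}, and a bounded-differences concentration step that yields the $\sqrt{\log(1/\epsilon)/N_{\mathrm{eff}}}$ term. Write $N:=N_{\mathrm{eff}}(a)$.

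\emph{Expectation bound.} I apply \Cref{lem:multiscale} with $\nu=\nu_a$, $\xi=\Lambda_a$ and a level $k^*$ to be chosen. For each cell $c\in\mathcal P_k$, the quantity $\nu_a(c)-\Lambda_a(c)=\sum_i a_i(\nu_i(c)-\Lambda_i(c))$ is a sum of independent centered terms. Hence
\[
\E|\nu_a(c)-\Lambda_a(c)|\le\Bigl(\sum_i a_i^2\E\nu_i(c)^2\Bigr)^{1/2}\le\Bigl(M\sum_ia_i^2\Lambda_i(c)\Bigr)^{1/2},
\]
using $\nu_i(c)^2\le M\nu_i(c)$. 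Summing over the $4^k$ cells and applying Cauchy--Schwarz gives
\[
\E S_k\le 2^k\Bigl(M\sum_ia_i^2\Lambda_i(\OmegaL)\Bigr)^{1/2}\le 2^kM N^{-1/2}.
\]
A second bound also holds: $S_k\le\nu_a(\OmegaL)+\Lambda_a(\OmegaL)\le 2M$, and I will use the minimum of the two. Then
\[
\E\,\OT_q^q\le C\Bigl(\sum_{k\le k^*}2^{-kq}\min(2^kN^{-1/2},1)+2^{-k^*q}\Bigr).
\]
For $q>1$ the series $\sum_k 2^{k(1-q)}$ converges, so taking $k^*\to\infty$ gives $CN^{-1/2}$. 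For $q=1$ each term is $N^{-1/2}$; choosing $2^{k^*}\asymp N^{1/2}$ gives $CN^{-1/2}\log(2+N)$. Thus $\E[\OT_q^q\mid\mathcal G]\le C\chi_q(N)N^{-1/2}$.

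\emph{Concentration.} I view $F(\nu_1,\ldots,\nu_n):=\OT_q^q(\nu_a,\Lambda_a)$ as a function of independent inputs. Replacing $\nu_i$ by $\nu_i'$ changes $\nu_a$ by $a_i(\nu_i'-\nu_i)$. I need the change in $F$ to be at most $C a_i M$. One route is to use $\OT_q$ as a metric with $\OT_q^q(\mu,\mu')\le C\,\mathrm{TV}$-type bounds for measures in a bounded window. The mass $a_i(\nu_i'-\nu_i)$ can be routed through the diagonal at cost $\le a_i(\nu_i+\nu_i')(\OmegaL)(L/\sqrt2)^q$. Concretely, the admissible plan for $(\nu_a,\Lambda_a)$ can be modified by deleting the transported portion of $a_i\nu_i$ to the diagonal and creating $a_i\nu'_i$ from the diagonal, at extra cost $\le 2Ma_i(L/\sqrt2)^q$. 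By symmetry this gives $|F-F'|\le c_i:=C M a_i$. McDiarmid's inequality then yields
\[
F\le\E F+\sqrt{\tfrac12\textstyle\sum_i c_i^2\log(1/\epsilon)}=\E F+C\sqrt{\log(1/\epsilon)/N}
\]
with conditional probability at least $1-\epsilon$.

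The main obstacle is justifying the plan modification in the bounded-differences step for partial transport, in particular when the optimal plan sends part of $a_i\nu_i$ to a location that also carries other mass. I would handle this through disintegration. Any admissible $\pi$ for $(\nu_a,\Lambda_a)$ restricted to the first marginal piece $a_i\nu_i\le\nu_a$, which is absolutely continuous with density $\le1$, gives a subplan $\pi_i$. Replacing $\pi_i$ by sending its second-marginal mass from the diagonal, and sending $a_i\nu_i'$ to the diagonal, keeps admissibility. A secondary technicality is measurability of $F$ and the conditional application of \Cref{lem:multiscale}'s constant; these are handled by working with a regular conditional distribution given $\mathcal G$.
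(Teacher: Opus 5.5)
Your argument is correct. The expectation step is essentially the paper's. The differences there are cosmetic: the paper bounds $\sum_c\nu_i(c)^2\le M^2$ rather than using $\nu_i(c)^2\le M\nu_i(c)$. It also lets the terminal level tend to infinity for every $q$ and splits the dyadic series at $k_N=\lfloor\frac12\log_2 N\rfloor$, instead of stopping at $2^{k^*}\asymp N^{1/2}$ when $q=1$.

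The genuine difference is the concentration step. The paper never concentrates $\OT_q^q$ itself. \Cref{lem:multiscale} with $k^*\to\infty$ gives the deterministic domination $\OT_q^q(\nu_a,\Lambda_a)\le CG$, where $G:=\sum_{k\ge0}2^{-kq}S_k$. McDiarmid is then applied to the surrogate $G$. Its bounded-differences constant $C_qMa_i$ is immediate, because replacing $\nu_i$ changes each $S_k$ by at most $a_i\sum_c|\nu_i(c)-\nu_i'(c)|\le 2Ma_i$.

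You instead show directly that $F=\OT_q^q(\cdot,\Lambda_a)$ has bounded differences, by plan surgery:
\begin{itemize}
\item start from an $\varepsilon$-optimal plan $\pi$ and remove the sub-plan $\mathbf 1_{\OmegaLo}(x)f(x)\,\pi(\dd x,\dd y)$, where $f=\dd(a_i\nu_i)/\dd\nu_a\le 1$;
\item recreate the off-diagonal part of its second marginal from $\DeltaL$;
\item send $a_i\nu_i'$ to $\DeltaL$.
\end{itemize}
This gives $|F-F'|\le 2(L/\sqrt2)^qMa_i$. The construction is admissible and the bound is valid.

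The paper's route is more economical. No transport plans are manipulated, and measurability of the concentrated functional is automatic, since $G$ is a countable sum of cell-mass functionals. You, by contrast, must check separately that $F$ is a measurable function of the random measures. In exchange, your route gives a reusable total-mass stability property of partial transport that decouples the deviation term from the multiscale device. Any other bound on $\E[\OT_q^q(\nu_a,\Lambda_a)\mid\mathcal G]$ could be combined with it unchanged.
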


\begin{proof}
Throughout the proof we condition on $\mathcal G$, so that the weights
$a_i$ and the measures $\Lambda_i$ are fixed, whereas
$\nu_1,\ldots,\nu_n$ remain independent. We suppress the conditioning from
the notation.

For
\[
S_k:=
\sum_{c\in\mathcal P_k}
|\nu_a(c)-\Lambda_a(c)|,
\]
\Cref{lem:multiscale}, applied with an arbitrary terminal level $K$, gives
\[
\OT_q^q(\nu_a,\Lambda_a)
\leq
C\left(
\sum_{k=0}^K2^{-kq}S_k+2^{-Kq}M
\right),
\]
because both $\nu_a$ and $\Lambda_a$ have total mass at most $M$.
Since $S_k\leq2M$, letting $K\to\infty$ yields
\begin{equation}
\label{eq:weighted-mixture-infinite}
\OT_q^q(\nu_a,\Lambda_a)
\leq
C G,
\qquad
G:=\sum_{k=0}^\infty2^{-kq}S_k.
\end{equation}

We first bound $\E G$. For every $c\in\mathcal P_k$,
\[
\nu_a(c)-\Lambda_a(c)
=
\sum_{i=1}^n
a_i\{\nu_i(c)-\Lambda_i(c)\}
\]
is a sum of independent centered variables. Therefore,
\[
\Var\{\nu_a(c)\}
=
\sum_{i=1}^na_i^2\Var\{\nu_i(c)\}
\leq
\sum_{i=1}^na_i^2\E\{\nu_i(c)^2\}.
\]
Using $\E|X|\leq\sqrt{\E X^2}$ for centered $X$, followed by
Cauchy--Schwarz over the at most $4^k$ cells of $\mathcal P_k$, gives
\begin{align*}
\E S_k
&\leq
\sum_{c\in\mathcal P_k}
\sqrt{\Var\{\nu_a(c)\}}\\
&\leq
2^k
\left(
\sum_{c\in\mathcal P_k}
\Var\{\nu_a(c)\}
\right)^{1/2}\\
&\leq
2^k
\left(
\sum_{i=1}^na_i^2
\E\left[
\sum_{c\in\mathcal P_k}\nu_i(c)^2
\right]
\right)^{1/2}\\
&\leq
2^kM
\left(\sum_{i=1}^na_i^2\right)^{1/2}
=
2^kMN^{-1/2},
\end{align*}
where
\[
\sum_{c\in\mathcal P_k}\nu_i(c)^2
\leq
\left(
\sum_{c\in\mathcal P_k}\nu_i(c)
\right)^2
=
\nu_i(\OmegaL)^2
\leq M^2.
\]
Together with the deterministic bound $S_k\leq2M$, this gives
\[
\E S_k
\leq
M\left(2^kN^{-1/2}\wedge2\right).
\]

Let
$k_N:=\left\lfloor\frac12\log_2N\right\rfloor.
$
For $k\leq k_N$, we have $2^kN^{-1/2}\leq1$, and therefore use \( \E S_k\leq M2^kN^{-1/2}. \) For $k>k_N$, we use the uniform bound \( \E S_k\leq2M. \) Consequently,
\begin{align*}
\E G
&=
\sum_{k=0}^{k_N}2^{-kq}\E S_k
+
\sum_{k>k_N}2^{-kq}\E S_k\\
&\leq
\sum_{k=0}^{k_N}2^{-kq}
\left(2^kMN^{-1/2}\right)
+
\sum_{k>k_N}2^{-kq}(2M)\\
&=
MN^{-1/2}
\sum_{k=0}^{k_N}2^{-k(q-1)}
+
2M\sum_{k>k_N}2^{-kq}.
\end{align*}
Absorbing the fixed factor $2$ into the constant gives
\[
\E G
\leq
CMN^{-1/2}
\sum_{k=0}^{k_N}2^{-k(q-1)}
+
CM\sum_{k>k_N}2^{-kq}.
\]

If $q>1$, the first sum is uniformly bounded; if $q=1$, it contains
$k_N+1\leq C\log(2+N)$ equal terms. Moreover,
\[
\sum_{k>k_N}2^{-kq}
\leq
C2^{-qk_N}
\leq
CN^{-q/2}
\leq
CN^{-1/2}.
\]
Hence, in both cases,
\begin{equation}
\label{eq:weighted-mixture-expectation}
\E G
\leq
CM\frac{\chi_q(N)}{\sqrt N}.
\end{equation}

It remains to control deviations of $G$. Replace $\nu_i$ by another measure
$\nu_i'$ satisfying $\nu_i'(\OmegaL)\leq M$, and let $S_k'$ and $G'$ denote
the resulting analogously defined quantities. At every level,
\begin{align*}
|S_k-S_k'|
&\leq
a_i\sum_{c\in\mathcal P_k}
|\nu_i(c)-\nu_i'(c)|\\
&\leq
a_i\{\nu_i(\OmegaL)+\nu_i'(\OmegaL)\}
\leq
2Ma_i.
\end{align*}
Consequently,
\[
|G-G'|
\leq
2Ma_i\sum_{k=0}^\infty2^{-kq}
\leq
C_qMa_i.
\]
We now apply McDiarmid's inequality explicitly. Replacing $\nu_i$ by
$\nu_i'$ changes $G$ by at most
\[
c_i:=C_qMa_i.
\]
Consequently,
\[
\sum_{i=1}^n c_i^2
=
C_q^2M^2\sum_{i=1}^n a_i^2
=
\frac{C_q^2M^2}{N}.
\]
McDiarmid's inequality therefore gives, for every $t>0$,
\[
\Pp\left(
G-\E G\geq t\mid\mathcal G
\right)
\leq
\exp\left(
-\frac{2Nt^2}{C_q^2M^2}
\right).
\]
Taking
\[
t=
\frac{C_qM}{\sqrt{2}}
\sqrt{\frac{\log(1/\epsilon)}{N}}
\]
shows that, with conditional probability at least $1-\epsilon$,
\[
G
\leq
\E G+
\frac{C_qM}{\sqrt{2}}
\sqrt{\frac{\log(1/\epsilon)}{N}}.
\]
Using the previously established estimate
\[
\E G
\leq
CM\frac{\chi_q(N)}{\sqrt N},
\]
we obtain, with the same conditional probability,
\[
G
\leq
CM\left\{
\frac{\chi_q(N)}{\sqrt N}
+
\sqrt{\frac{\log(1/\epsilon)}{N}}
\right\}.
\]
Finally, since
\[
\OT_q^q(\nu_a,\Lambda_a)\leq CG,
\]
we conclude, after enlarging the constant, that
\[
\OT_q^q(\nu_a,\Lambda_a)
\leq
C\left\{
\frac{\chi_q(N)}{\sqrt N}
+
\sqrt{\frac{\log(1/\epsilon)}{N}}
\right\}
\]
with conditional probability at least $1-\epsilon$.
\end{proof}

The extra logarithm for $q=1$ comes from the multiscale sum: its dyadic terms
form a geometric series for $q>1$, whereas for $q=1$ there are order $\log N$
equal contributions. We do not attempt to remove this factor.

\begin{lemma}[Uniform local measure bound]
\label{lem:uniform-local-epd}
Under the assumptions of \Cref{lem:denominator-concentration} and
\Cref{ass:envelope}\textup{(E1)}, for every fixed $\delta\in(0,1)$ and all
sufficiently large $n$, on the event
$\mathcal E_Z=\mathcal E_Z(h_Z,\delta)$ of
\Cref{lem:denominator-concentration}, with conditional probability at least
$1-\delta$ given the covariates,
\begin{align*}
&\sup_{z\in\calZ}
\OT_q^q\Biggl(
\sum_{i=1}^n a_i(z)\mu_i^w,\\[-0.2em]
&\hspace{7em}\sum_{i=1}^n a_i(z)\Lambda_{Z_i}^w
\Biggr)\\
&\qquad\leq
C\left\{
\frac{\chi_q(nh_Z^d)}{\sqrt{nh_Z^d}}
+
\sqrt{
\frac{\log\!\left(Cn/(\delta h_Z^d)\right)}
{nh_Z^d}
}
\right\}.
\end{align*}
\end{lemma}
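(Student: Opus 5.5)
The plan is to condition on the covariates, apply \Cref{lem:weighted-epd} pointwise in $z$ on a finite net of $\calZ$, and pass from the net to all of $\calZ$ using Lipschitz continuity of the weights $a_i(z)$.

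\emph{Conditioning.} Let $\mathcal G:=\sigma(Z_1,\ldots,Z_n)$. Because the pairs $(Z_i,D_i)$ are i.i.d., the measures $\mu_1^w,\ldots,\mu_n^w$ are conditionally independent given $\mathcal G$, with conditional mean measures $\Lambda_i=\Lambda^w_{Z_i}$. By (E1) each has total mass at most $M_w$. On $\mathcal E_Z$ (with $Cr_{Z,n}\le c_Z/2$, which holds for large $n$), the weights $a_i(z)$ are $\mathcal G$-measurable, nonnegative and sum to one. By \Cref{lem:local-effective-sample}, $N_{\mathrm{eff}}(a(z))\ge cnh_Z^d$ uniformly in $z$. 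Since $N\mapsto\chi_q(N)/\sqrt N$ is nonincreasing for $N$ beyond a fixed constant, the pointwise bound of \Cref{lem:weighted-epd} can then be stated with $nh_Z^d$ in place of $N_{\mathrm{eff}}$, at the cost of a constant.

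\emph{Working with the multiscale functional.} Rather than using $\OT_q^q$ directly, which is not obviously Lipschitz in the weights, I would work with the functional from the proof of \Cref{lem:weighted-epd},
\[
G(z):=\sum_{k\ge0}2^{-kq}\sum_{c\in\mathcal P_k}\Bigl|\sum_i a_i(z)\{\mu_i^w(c)-\Lambda^w_{Z_i}(c)\}\Bigr|,
\]
which satisfies $\OT_q^q\le CG(z)$. That proof actually shows that, for each fixed $z$, $G(z)\le CM_w\{\chi_q(N)/\sqrt N+\sqrt{\log(1/\epsilon)/N}\}$ with conditional probability at least $1-\epsilon$.

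\emph{Lipschitz continuity in $z$.} For $\norm{z-z'}\le h_Z$, each level satisfies $|S_k(z)-S_k(z')|\le\sum_i|a_i(z)-a_i(z')|\{\mu_i^w(\OmegaL)+\Lambda^w_{Z_i}(\OmegaL)\}\le 2M_w\sum_i|a_i(z)-a_i(z')|$. By \Cref{lem:local-effective-sample} this gives
\[
|G(z)-G(z')|\le C M_w\norm{z-z'}/h_Z.
\]
This bound is deterministic on $\mathcal E_Z$.

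\emph{Net and union bound.} Take an $\eta h_Z$-net $\mathcal N\subset\calZ$ with $\eta:=(nh_Z^d)^{-1/2}$, so that $|\mathcal N|\le C(\eta h_Z)^{-d}\le C(n/h_Z^{d})^{C}$. Apply the pointwise bound at each net point with $\epsilon=\delta/|\mathcal N|$ and take a union bound. This gives, with conditional probability at least $1-\delta$, $\max_{z\in\mathcal N}G(z)\le C\{\chi_q(nh_Z^d)/\sqrt{nh_Z^d}+\sqrt{\log(Cn/(\delta h_Z^d))/(nh_Z^d)}\}$, since $\log|\mathcal N|+\log(1/\delta)\le C\log(Cn/(\delta h_Z^d))$. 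For an arbitrary $z$, the nearest net point is within $\eta h_Z$, so the Lipschitz bound adds at most $CM_w\eta=C(nh_Z^d)^{-1/2}$, which is absorbed into the first term. Finally $\OT_q^q\le CG$ yields the stated uniform bound.

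The main obstacle is this extension from the net to all of $\calZ$. It is resolved by passing through $G$, which is linear in the weights, rather than through $\OT_q^q$ itself. The one point to check carefully is that the Lipschitz bound on the weights from \Cref{lem:local-effective-sample} holds on the same event $\mathcal E_Z$ on which we condition.
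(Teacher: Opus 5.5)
Your proof is correct, but you pass from the net to all of $\calZ$ differently from the paper.

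\textbf{The paper's route.} It stays at the level of $\OT_q$. It couples the common submeasure $\sum_i\min\{a_i(z),a_i(z')\}\mu_i^w$ at zero cost and sends the residual mass to or from the diagonal, which gives
$\OT_q^q(\nu_z,\nu_{z'})+\OT_q^q(\Lambda_z^\circ,\Lambda_{z'}^\circ)\le C\sum_i|a_i(z)-a_i(z')|$.
It then uses the triangle inequality for $\OT_q$, paying a factor $3^{q-1}$, together with a very fine net of mesh $h_Zn^{-2}$. In this way \Cref{lem:weighted-epd} is used only through its statement. Your worry that $\OT_q^q$ is not obviously Lipschitz in the weights is thus settled in the paper by an elementary coupling.

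\textbf{Your route.} You bound the supremum of the linear multiscale functional $G$ instead. This functional dominates $\OT_q^q$ and is trivially Lipschitz in the weights, because the cells of each $\mathcal P_k$ partition $\OmegaL$.
\begin{itemize}
\item What it gains: you avoid both the coupling construction and the partial-transport triangle inequality, and a coarser net of mesh $h_Z(nh_Z^d)^{-1/2}$ suffices.
\item What it costs: you must reopen the proof of \Cref{lem:weighted-epd} and use the conditional high-probability bound on $G$ itself, i.e.\ the display just before the step $\OT_q^q\le CG$, rather than the stated bound on $\OT_q^q$.
\end{itemize}

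Both routes give the same rate and an equivalent logarithmic bound on the cardinality of the net.
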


\begin{proof}
Let
\[
\mathcal G:=\sigma(Z_1,\ldots,Z_n)
\]
and fix a realization of the covariates belonging to $\mathcal E_Z$.
Conditional on $\mathcal G$, the random measures
$\mu_1^w,\ldots,\mu_n^w$ are independent. Moreover, for every Borel set
$A\subseteq\OmegaL$,
\[
\E\bigl[\mu_i^w(A)\mid\mathcal G\bigr]
=
\E\bigl[\mu_i^w(A)\mid Z_i\bigr]
=
\Lambda_{Z_i}^w(A).
\]
Thus $\Lambda_{Z_i}^w$ is the conditional mean measure of $\mu_i^w$ given
$\mathcal G$. By \Cref{ass:envelope}\textup{(E1)},
\[
\mu_i^w(\OmegaL)\leq M_w
\]
almost surely, and hence
\[
\Lambda_{Z_i}^w(\OmegaL)
=
\E\bigl[\mu_i^w(\OmegaL)\mid\mathcal G\bigr]
\leq M_w.
\]

For each $z\in\calZ$, define
\[
\nu_z:=\sum_{i=1}^n a_i(z)\mu_i^w,
\qquad
\Lambda_z^\circ:=
\sum_{i=1}^n a_i(z)\Lambda_{Z_i}^w.
\]

We first control the variation of these mixtures with respect to $z$.
For $z,z'\in\calZ$, define
\[
\gamma_{z,z'}
:=
\sum_{i=1}^n
\min\{a_i(z),a_i(z')\}\mu_i^w.
\]
Since $\min\{a_i(z),a_i(z')\}\leq a_i(z)$ and
$\min\{a_i(z),a_i(z')\}\leq a_i(z')$ for every $i$, we have
\[
\gamma_{z,z'}\leq\nu_z
\qquad\text{and}\qquad
\gamma_{z,z'}\leq\nu_{z'}.
\]
Hence $\gamma_{z,z'}$ is a common submeasure of $\nu_z$ and $\nu_{z'}$.
The part of the two measures represented by $\gamma_{z,z'}$ can be coupled
through
\[
(\operatorname{id},\operatorname{id})_{\#}\gamma_{z,z'},
\]
which has zero transport cost. It remains only to handle the residual
measures
\[
\nu_z-\gamma_{z,z'}
=
\sum_{i=1}^n
\bigl(a_i(z)-a_i(z')\bigr)_+\mu_i^w
\]
and
\[
\nu_{z'}-\gamma_{z,z'}
=
\sum_{i=1}^n
\bigl(a_i(z')-a_i(z)\bigr)_+\mu_i^w.
\]
 The remaining mass in $\nu_z$ can be sent to the diagonal, and
the remaining mass in $\nu_{z'}$ can be created from the diagonal. Since
\[
d_{\max}:=
\sup_{u\in\OmegaL}d_{\DeltaL,2}(u)<\infty
\]
and $\mu_i^w(\OmegaL)\leq M_w$, this gives an admissible partial transport
whose cost is at most
$d_{\max}^qM_w
\sum_{i=1}^n|a_i(z)-a_i(z')|.
$

The same construction, using
$\Lambda_{Z_i}^w(\OmegaL)\leq M_w$, applies to
$\Lambda_z^\circ$ and $\Lambda_{z'}^\circ$. Therefore,
\[
\OT_q^q(\nu_z,\nu_{z'})
+
\OT_q^q(\Lambda_z^\circ,\Lambda_{z'}^\circ)
\leq
C\sum_{i=1}^n|a_i(z)-a_i(z')|.
\]
On $\mathcal E_Z$, \Cref{lem:local-effective-sample} consequently gives,
whenever $\norm{z-z'}\leq h_Z$,
\begin{equation}
\label{eq:local-mixture-continuity}
\OT_q^q(\nu_z,\nu_{z'})
+
\OT_q^q(\Lambda_z^\circ,\Lambda_{z'}^\circ)
\leq
C\frac{\norm{z-z'}}{h_Z}.
\end{equation}

Set
$\eta:=h_Zn^{-2}
$
and let $\mathcal N_\eta\subseteq\calZ$ be an $\eta$-net. Since $\calZ$ is
a bounded subset of $\mathbb R^d$,
\[
|\mathcal N_\eta|
\leq C\eta^{-d}
=
Cn^{2d}h_Z^{-d}.
\]
For every $z\in\calZ$, choose $z'\in\mathcal N_\eta$ such that
$\norm{z-z'}\leq\eta.
$
Since $\eta\leq h_Z$, \eqref{eq:local-mixture-continuity} applies to the
pair $(z,z')$. Moreover, the triangle inequality for $\OT_q$ gives
\[
\OT_q(\nu_z,\Lambda_z^\circ)
\leq
\OT_q(\nu_z,\nu_{z'})
+
\OT_q(\nu_{z'},\Lambda_{z'}^\circ)
+
\OT_q(\Lambda_{z'}^\circ,\Lambda_z^\circ).
\]
Therefore, using
\[
(x+y+r)^q\leq3^{q-1}(x^q+y^q+r^q)
\]
and \eqref{eq:local-mixture-continuity}, we obtain
\begin{align*}
\OT_q^q(\nu_z,\Lambda_z^\circ)
&\leq
3^{q-1}\Bigl\{
\OT_q^q(\nu_z,\nu_{z'})
+
\OT_q^q(\nu_{z'},\Lambda_{z'}^\circ)
+
\OT_q^q(\Lambda_{z'}^\circ,\Lambda_z^\circ)
\Bigr\}\\
&\leq
C\OT_q^q(\nu_{z'},\Lambda_{z'}^\circ)
+
C\frac{\norm{z-z'}}{h_Z}\\
&\leq
C\OT_q^q(\nu_{z'},\Lambda_{z'}^\circ)
+
C\frac{\eta}{h_Z}.
\end{align*}

Since $\eta/h_Z=n^{-2}$,
\begin{equation}
\label{eq:local-mixture-net-extension}
\sup_{z\in\calZ}
\OT_q^q(\nu_z,\Lambda_z^\circ)
\leq
C\max_{z'\in\mathcal N_\eta}
\OT_q^q(\nu_{z'},\Lambda_{z'}^\circ)
+
Cn^{-2}.
\end{equation}

It remains to control the finitely many net points. On $\mathcal E_Z$,
\Cref{lem:local-effective-sample} gives
\[
N_{\mathrm{eff}}\{a(t)\}
=
\left(\sum_{i=1}^n a_i(t)^2\right)^{-1}
\geq cnh_Z^d
\qquad
\text{for every }t\in\mathcal N_\eta.
\]
At a fixed $t\in\mathcal N_\eta$, apply
\Cref{lem:weighted-epd} conditionally on $\mathcal G$ with
\[
\nu_i=\mu_i^w,
\qquad
\Lambda_i=\Lambda_{Z_i}^w,
\qquad
\epsilon=\frac{\delta}{|\mathcal N_\eta|}.
\]
It follows that, except on a conditional event of probability at most
$\delta/|\mathcal N_\eta|$,
\[
\OT_q^q(\nu_t,\Lambda_t^\circ)
\leq
C\left\{
\frac{
\chi_q\!\left(N_{\mathrm{eff}}\{a(t)\}\right)
}{
\sqrt{N_{\mathrm{eff}}\{a(t)\}}
}
+
\sqrt{
\frac{
\log\!\left(|\mathcal N_\eta|/\delta\right)
}{
N_{\mathrm{eff}}\{a(t)\}
}
}
\right\}.
\]
A union bound makes this conclusion simultaneous over all
$t\in\mathcal N_\eta$ with conditional probability at least $1-\delta$.

For all sufficiently large $nh_Z^d$, the function
\[
N\longmapsto\frac{\chi_q(N)}{\sqrt N}
\]
is decreasing. Using
$N_{\mathrm{eff}}\{a(t)\}\geq cnh_Z^d$ and changing the constant therefore
gives
\begin{equation}
\label{eq:local-mixture-net-bound}
\max_{z'\in\mathcal N_\eta}
\OT_q^q(\nu_{z'},\Lambda_{z'}^\circ)
\leq
C\left\{
\frac{\chi_q(nh_Z^d)}{\sqrt{nh_Z^d}}
+
\sqrt{
\frac{\log\!\left(|\mathcal N_\eta|/\delta\right)}
{nh_Z^d}
}
\right\}
\end{equation}
with conditional probability at least $1-\delta$.

Finally, the bound on the cardinality of the net implies
\[
\begin{aligned}
\log\!\left(\frac{|\mathcal N_\eta|}{\delta}\right)
&\leq
\log(C/\delta)+2d\log n+d\log(1/h_Z)\\
&\leq
C\log\!\left(\frac{Cn}{\delta h_Z^d}\right),
\end{aligned}
\]
where the last inequality uses that $d$ is fixed and $h_Z\leq1$ for all
sufficiently large $n$. Consequently,
\[
\max_{z'\in\mathcal N_\eta}
\OT_q^q(\nu_{z'},\Lambda_{z'}^\circ)
\leq
C\left\{
\frac{\chi_q(nh_Z^d)}{\sqrt{nh_Z^d}}
+
\sqrt{
\frac{\log\!\left(Cn/(\delta h_Z^d)\right)}
{nh_Z^d}
}
\right\}.
\]
For all sufficiently large $n$, the extension error $n^{-2}$ in
\eqref{eq:local-mixture-net-extension} is bounded by
\[
\frac{\chi_q(nh_Z^d)}{\sqrt{nh_Z^d}}
+
\sqrt{
\frac{\log\!\left(Cn/(\delta h_Z^d)\right)}
{nh_Z^d}
}.
\]
Combining the preceding bound with
\eqref{eq:local-mixture-net-extension}, and enlarging the constant $C$,
proves the result.
\end{proof}

\begin{proof}[Proof of \Cref{thm:pot-rate}]
Work on the event on which the covariate denominator is positive uniformly in $z$; \Cref{lem:denominator-concentration} will give this event the required probability below.  With $a_i(z)$ as above, set
\[
\overline\Lambda_{z,n}^w:=\sum_{i=1}^n a_i(z)\mu_i^w,
\qquad
\overline\Lambda_z^w:=\sum_{i=1}^n a_i(z)\Lambda_{Z_i}^w.
\]
The double-kernel estimator, interpreted as a measure, is
\[
\widehat\Lambda_{z,n}^{w,h_U}=T_{U,h_U}\overline\Lambda_{z,n}^w,
\]
with $T_{U,h}$ as in \Cref{lem:smoothing-pot}.  To separate the three sources of error, insert the two intermediate measures
$\overline\Lambda_{z,n}^w$ and $\overline\Lambda_z^w$.  The triangle inequality for $\OT_q$ gives
\begin{align*}
\OT_q(\widehat\Lambda_{z,n}^{w,h_U},\Lambda_z^w)
&\le
\OT_q(T_{U,h_U}\overline\Lambda_{z,n}^w,
       \overline\Lambda_{z,n}^w)\\
&\quad+
\OT_q(\overline\Lambda_{z,n}^w,\overline\Lambda_z^w)
+\OT_q(\overline\Lambda_z^w,\Lambda_z^w).
\end{align*}
For partial transport, this triangle inequality follows by gluing two admissible transports along their common off-diagonal marginal; diagonal mass remains unconstrained and can be retained in the glued plan.  Raising the preceding display to the $q$th power and using
$(x+y+r)^q\le3^{q-1}(x^q+y^q+r^q)$ yields
\[
\OT_q^q
\left(
\widehat\Lambda_{z,n}^{w,h_U},
\Lambda_z^w
\right)
\le
C_q(A_n(z)+B_n(z)+C_n(z)),
\]
where
\[
A_n(z):=\OT_q^q\left(T_{U,h_U}\overline\Lambda_{z,n}^w,\overline\Lambda_{z,n}^w\right),
\quad
B_n(z):=\OT_q^q\left(\overline\Lambda_{z,n}^w,\overline\Lambda_z^w\right),
\quad
C_n(z):=\OT_q^q\left(\overline\Lambda_z^w,\Lambda_z^w\right).
\]

We first control the covariate-bias term $C_n(z)$.  The weights are nonnegative and sum to one.  In addition, kernel support implies
\[
a_i(z)>0\quad\Longrightarrow\quad\norm{Z_i-z}\le h_Z.
\]
We recall explicitly the mixture-convexity calculation.  For each $i$, choose an admissible transport $\pi_i$ from
$\Lambda_{Z_i}^w$ to $\Lambda_z^w$ whose cost is within $\varepsilon$ of the infimum.  Then
\[
\pi:=\sum_{i=1}^na_i(z)\pi_i
\]
is admissible.  Indeed, for every Borel set $A\subseteq\OmegaLo$, its two constrained marginals are
\begin{align*}
\pi(A\times\OmegaL)
&=\sum_i a_i(z)\Lambda_{Z_i}^w(A)
=\overline\Lambda_z^w(A),\\
\pi(\OmegaL\times A)
&=\sum_i a_i(z)\Lambda_z^w(A)
=\Lambda_z^w(A).
\end{align*}
Its cost is the corresponding weighted sum of the costs of the $\pi_i$.  Letting $\varepsilon\downarrow0$ gives
\[
C_n(z)
\le
\sum_{i=1}^n a_i(z)\OT_q^q(\Lambda_{Z_i}^w,\Lambda_z^w)
\le
\omega_{\OT,q}(h_Z),
\]
uniformly in $z$.  This term is deterministic once the covariates and the weights are fixed.

We next control the persistence-smoothing term $A_n(z)$.  The mixture
$\overline\Lambda_{z,n}^w$ has mass at most $M_w$, because every
$\mu_i^w$ does and the weights sum to one.  \Cref{lem:smoothing-pot} therefore gives
\[
A_n(z)
\le
C h_U^q
+
C\sum_{i=1}^n a_i(z)\int_{\OmegaL}|r_{U,h_U}(v)-1|d_{\DeltaL,2}(v)^q\dd\mu_i^w(v).
\]
Define
\[
B_i:=\int_{\OmegaL}|r_{U,h_U}(v)-1|d_{\DeltaL,2}(v)^q\dd\mu_i^w(v).
\]
The proof of \Cref{lem:smoothing-pot} showed that
$r_{U,h_U}$ is uniformly bounded.  Also,
$d_{\DeltaL,2}(v)\le d_{\max}$ on the compact window and
$\mu_i^w(\OmegaL)\le M_w$.  Hence $B_i\le C M_wd_{\max}^q$ almost surely.  By the defining property of the conditional mean measure,
\begin{align*}
\E[B_i\mid Z_i=z']
&=\int_{\OmegaL}|r_{U,h_U}(v)-1|
d_{\DeltaL,2}(v)^q\dd\Lambda_{z'}^w(v)\\
&\le\mathfrak b_{U,q}(h_U)
\qquad\text{for every }z'\in\calZ.
\end{align*}
Apply \Cref{lem:scalar-local-average} with
$b_*(h_U)=\mathfrak b_{U,q}(h_U)$ and failure probability $\delta/3$.  It follows that
\[
\sup_{z\in\calZ}A_n(z)
\le
C(h_U^q+\mathfrak b_{U,q}(h_U)+r_{Z,n}(h_Z,\delta))
\]
with probability at least $1-\delta/3$, after changing the constant to absorb the factor $3$ inside the logarithm.

It remains to control $B_n(z)$, the fluctuation of the local empirical measure about its conditional mean.  Apply \Cref{lem:denominator-concentration} with failure probability $\delta/3$ to obtain $\mathcal E_Z$.  Conditional on the covariates and on this event, \Cref{lem:uniform-local-epd}, again with failure probability $\delta/3$, gives
\[
\sup_{z\in\calZ}B_n(z)
\le
C\left(\frac{\chi_q(nh_Z^d)}{\sqrt{nh_Z^d}}+r_{Z,n}(h_Z,\delta)\right)
\]
after adjusting constants inside logarithms.  The smoothing event fails with probability at most $\delta/3$, the denominator event with probability at most $\delta/3$, and, conditional on the covariates, the empirical-measure event with probability at most $\delta/3$.  The tower property and a union bound therefore show that all three estimates hold simultaneously with unconditional probability at least $1-\delta$.  Taking the supremum in the three-term decomposition and substituting the three bounds proves \eqref{eq:pot-rate}.

We finish by checking every term needed for consistency.  By assumption,
\[
h_U^q\to0,
\qquad
\mathfrak b_{U,q}(h_U)\to0,
\qquad
\omega_{\OT,q}(h_Z)\to0.
\]
Let $N_Z:=nh_Z^d$. The condition $N_Z/\log n\to\infty$, together with
\eqref{eq:covariate-log-consequence}, implies
\[
r_{Z,n}(h_Z,\delta)
=O\left(\sqrt{\frac{\log n}{N_Z}}+\frac{\log n}{N_Z}\right)
\longrightarrow0
\]
for fixed $\delta$.  If $q>1$, then
$\chi_q(N_Z)N_Z^{-1/2}=N_Z^{-1/2}\to0$.  If $q=1$, the same conclusion follows from the elementary limit
$\log(2+x)/\sqrt{x}\to0$ as $x\to\infty$.  Thus the right-hand side of \eqref{eq:pot-rate} converges to zero.  Given any probability tolerance, apply the finite-sample result with that value of $\delta$; this proves convergence in probability of the supremum of $\OT_q^q$, and taking the $q$th root proves the stated $\OT_q$ consistency.
\end{proof}

\begin{proof}[Proof of \Cref{cor:pot-holder-rate}]
The H\"older assumption gives the explicit covariate-bias bound
\[
\omega_{\OT,q}(h_Z)
=\sup_{\norm{z-z'}\le h_Z}
\OT_q^q(\Lambda_z^w,\Lambda_{z'}^w)
\le L_{\OT}h_Z^\beta.
\]
The boundary assumption gives
$\mathfrak b_{U,q}(h_U)\le Ch_U^q$.  Fix
$\delta\in(0,1)$. \Cref{eq:covariate-log-consequence} implies
\[
r_{Z,n}(h_Z,\delta)
=O\left(
\sqrt{\frac{\log n}{nh_Z^d}}
+\frac{\log n}{nh_Z^d}
\right).
\]
Since the corollary assumes $q>1$, we have $\chi_q\equiv1$, and
\[
\frac1{\sqrt{nh_Z^d}}
\le
\sqrt{\frac{\log n}{nh_Z^d}}
\]
for all sufficiently large $n$.  Substitution in \Cref{thm:pot-rate} therefore yields
\[
\sup_{z\in\calZ}
\OT_q^q(\widehat\Lambda_{z,n}^{w,h_U},\Lambda_z^w)
=O_P\left(
h_U^q+h_Z^\beta
+\sqrt{\frac{\log n}{nh_Z^d}}
+\frac{\log n}{nh_Z^d}
\right),
\]
To verify the bandwidth balance, put
\[
t_n:=\frac{\log n}{n},
\qquad
h_Z\asymp t_n^{1/(2\beta+d)}.
\]
Then
\[
h_Z^\beta\asymp t_n^{\beta/(2\beta+d)}.
\]
For the square-root stochastic term,
\begin{align*}
\sqrt{\frac{\log n}{nh_Z^d}}
&=\sqrt{\frac{t_n}{h_Z^d}}\\
&\asymp
\left(t_nt_n^{-d/(2\beta+d)}\right)^{1/2}\\
&=t_n^{\beta/(2\beta+d)}.
\end{align*}
The linear stochastic term is smaller:
\[
\frac{\log n}{nh_Z^d}
=\frac{t_n}{h_Z^d}
\asymp t_n^{2\beta/(2\beta+d)}
=o(t_n^{\beta/(2\beta+d)}).
\]
Finally, any $h_U$ satisfying
$h_U^q\lesssim t_n^{\beta/(2\beta+d)}$ makes both persistence-smoothing contributions no larger than the balanced covariate rate.  These bandwidths satisfy
$nh_Z^d/\log n\asymp t_n^{-2\beta/(2\beta+d)}\to\infty$, as required by the theorem.  Therefore,
\[
\sup_{z\in\calZ}
\OT_q^q(\widehat\Lambda_{z,n}^{w,h_U},\Lambda_z^w)
=O_P(t_n^{\beta/(2\beta+d)}).
\]
Because $x\mapsto x^{1/q}$ is increasing on $[0,\infty)$, taking $q$th roots gives
\[
\sup_{z\in\calZ}
\OT_q(\widehat\Lambda_{z,n}^{w,h_U},\Lambda_z^w)
=O_P(t_n^{\beta/(q(2\beta+d))}),
\]
which is the final assertion.
\end{proof}

\subsection[Proof of the normalized-density theorem]
{Proof of \Cref{thm:normalized-rate}}
\label{app:normalized}

\begin{proof}[Proof of \Cref{thm:normalized-rate}]
Write
\[
\widehat{\widetilde N}_n(z,u)
:=
\frac1n\sum_{i=1}^n
\kappa_{Z,h_Z}(z,Z_i)\widetilde Y_{i,h_U}(u),
\qquad
\widetilde N_h:=\E\widehat{\widetilde N}_n,
\qquad
\widetilde\lambda_h:=\frac{\widetilde N_h}{D_h}.
\]
The normalized response has unit total mass. Consequently,
Cauchy--Schwarz with respect to \(\widetilde\mu_i^w\) gives
\[
\widetilde Y_{i,h_U}(u)^2
\leq
\int_{\OmegaL}\kappa_{U,h_U}(u,v)^2
\dd\widetilde\mu_i^w(v).
\]
Taking conditional expectations and using the density of the normalized
conditional mean measure and \eqref{eq:kernel-norm-bounds}, we obtain
\begin{equation}\label{eq:normalized-response-second-moment}
\sup_{z,u}
\E[\widetilde Y_{i,h_U}(u)^2\mid Z_i=z]
\leq
\norm{\widetilde\lambda_w}_\infty
\sup_u\int_{\OmegaL}\kappa_{U,h_U}(u,v)^2\dd v
\leq Ch_U^{-2}.
\end{equation}
This is the only response-specific check not already contained in the proof
of \Cref{thm:supnorm-rate}. Indeed,
\Cref{lem:double-kernel-bias}, applied to
\(\nu=\widetilde\mu^w\), gives
\begin{equation}\label{eq:normalized-deterministic-bias}
\norm{\widetilde\lambda_h-\widetilde\lambda_w}_\infty
\leq\omega_{\widetilde\lambda_w}(h_Z+h_U),
\qquad
\inf_zD_h(z)\geq c_Z,
\qquad
\norm{\widetilde\lambda_h}_\infty
\leq\norm{\widetilde\lambda_w}_\infty.
\end{equation}
The mass-envelope condition in \Cref{lem:numerator-concentration} holds
with \(M_n=1\), and its second-moment condition is
\eqref{eq:normalized-response-second-moment}. Applying that lemma and
\Cref{lem:denominator-concentration}, each with failure probability
\(\delta/2\), yields an event \(\mathcal E\) of probability at least
\(1-\delta\) on which
\[
\norm{\widehat{\widetilde N}_n-\widetilde N_h}_\infty
\leq Cr_n(h_Z,h_U,\delta),
\qquad
\norm{\widehat D_n-D_h}_\infty
\leq Cr_{Z,n}(h_Z,\delta).
\]
As in the proof of \Cref{thm:supnorm-rate}, \(h_U\leq1\) eventually implies
\(r_{Z,n}\leq Cr_n\), and \Cref{ass:kernels} implies
\(r_{Z,n}\to0\). Hence, for all sufficiently large \(n\),
\(\inf_z\widehat D_n(z)\geq c_Z/2\) on \(\mathcal E\). The exact ratio
identity
\[
\widehat{\widetilde\lambda}_{w,n}-\widetilde\lambda_h
=
\frac{
(\widehat{\widetilde N}_n-\widetilde N_h)
-\widetilde\lambda_h(\widehat D_n-D_h)
}{
\widehat D_n
}
\]
and \eqref{eq:normalized-deterministic-bias} therefore give
\[
\norm{
\widehat{\widetilde\lambda}_{w,n}
-\widetilde\lambda_w
}_\infty
\leq
C\left\{
\omega_{\widetilde\lambda_w}(h_Z+h_U)
+r_n(h_Z,h_U,\delta)
\right\}
\]
on \(\mathcal E\), proving the finite-sample assertion. Uniform continuity
and \Cref{ass:kernels} make the two terms on the right tend to zero; applying
the bound with an arbitrary fixed failure probability proves convergence in
probability.
\end{proof}

\subsection{Sufficient conditions and examples for truncation}
\label{app:truncation-details}

The two truncation requirements concern different issues. First, the
deterministic levels $M_n$ must be large enough that the probability of
truncating any of the $n$ observed diagrams vanishes, but small enough that
the variable-envelope term in the numerator concentration bound remains
negligible. A convenient sufficient condition follows from a moment bound.
If
\[
\E\left[M(D)^r\right]<\infty
\]
for some $r>0$ and
\[
n^{1/r}
=
o\left(
\frac{nh_Z^dh_U^2}{\log n}
\right),
\]
choose a nondecreasing sequence $\ell_n\to\infty$ satisfying
\[
\ell_n
=
o\left(
\frac{nh_Z^dh_U^2}
{n^{1/r}\log n}
\right)
\]
and set $M_n=n^{1/r}\ell_n$. Then Markov's inequality gives
\[
n\Pp\{M(D)>M_n\}
\leq
\frac{\E[M(D)^r]}{\ell_n^r}
\longrightarrow0,
\]
while
\[
\frac{M_n\log n}{nh_Z^dh_U^2}
\longrightarrow0.
\]
For exponentially light tails, one may instead take $M_n$ of logarithmic
order; for example, if $\E\exp\{cM(D)\}<\infty$, then $M_n=C\log n$ is
admissible for sufficiently large $C$ whenever
$nh_Z^dh_U^2/(\log n)^2\to\infty$.

The spatial truncation error is controlled separately by the conditional
mass-weighted mean measure
\[
\Gamma_z^w(A)
:=
\E\left[
M(D)\mu_D^w(A)\mid Z=z
\right].
\]
Suppose that $\Gamma_z^w$ has a density $\gamma_w(z,u)$ satisfying
\[
\sup_{z\in\calZ,u\in\OmegaL}\gamma_w(z,u)<\infty.
\]
This condition also verifies the second-moment assumption in
\Cref{prop:truncation}. Indeed, Cauchy--Schwarz with respect to $\mu_D^w$
and the definition of $\Gamma_z^w$ give
\[
\E\left[(Y^w_{h_U}(u))^2\mid Z=z\right]
\leq
\int_{\OmegaL}\kappa_{U,h_U}(u,v)^2\gamma_w(z,v)\dd v
\leq Ch_U^{-2}.
\]
Then, for every $t>0$,
\[
\E\left[
\mu_D^w(A)\ind{M(D)>t}
\mid Z=z
\right]
\leq
\frac{1}{t}\Gamma_z^w(A).
\]
Therefore, the conditional tail mean measure has a density
$\lambda^{\mathrm{tail}}_{w,t}$ satisfying
\[
0
\leq
\lambda^{\mathrm{tail}}_{w,t}(z,u)
\leq
\frac{\gamma_w(z,u)}{t},
\]
and therefore
\[
\left\|
\lambda_{w,n}^{\mathrm{tr}}-\lambda_w
\right\|_\infty
\leq
\frac{\|\gamma_w\|_\infty}{M_n}
\longrightarrow0.
\]
This condition is not specific to Poisson processes and does not require the
diagram to contain finitely many points. For a finite diagram with cardinality
$N$, it holds, for example, if the conditional one-point marginals given
$(Z,N)$ are uniformly bounded and
$\sup_z\E[N^2\mid Z=z]<\infty$. It is also preserved under conditionally
independent finite superpositions. Thus one may superpose a Poisson noise
component having infinitely many points accumulating near the diagonal with a
finite repulsive component, provided the total weighted mass is finite and the
corresponding component mass-weighted mean densities are uniformly bounded.
In the Poisson case, the latter property follows directly from the Mecke
formula when the weighted intensity is bounded, as detailed in
\Cref{prop:poisson-truncation}. In particular, the Poisson designs of
\Cref{sec:sim-forward} satisfy this condition.

\subsection[Proof of the truncation proposition]{Proof of \Cref{prop:truncation}}\label{app:truncation}

\begin{proof}[Proof of \Cref{prop:truncation}]
Define
\[
\mu_{i,n}^{w,\mathrm{tr}}
:=
\mu_i^w\ind{M(D_i)\leq M_n},
\qquad
\mathcal A_n:=\bigcap_{i=1}^n\{M(D_i)\leq M_n\}.
\]
Let $\widehat\lambda_{w,n}^{\mathrm{tr}}$ be the estimator obtained from
$\mu_{i,n}^{w,\mathrm{tr}}$ in place of $\mu_i^w$.
A union bound gives
\(\Pp(\mathcal A_n^c)\leq n\Pp\{M(D)>M_n\}\to0\), and on
\(\mathcal A_n\) the original and truncated estimators coincide.

Let \(\widehat N_n^{\mathrm{tr}}\) be the numerator formed from the truncated
measures, let \(N_{h,n}^{\mathrm{tr}}:=\E\widehat
N_n^{\mathrm{tr}}\), and set
\[
\lambda_{h,n}^{\mathrm{tr}}
:=
\frac{N_{h,n}^{\mathrm{tr}}}{D_h},
\qquad
b_n:=
\norm{\lambda_{w,n}^{\mathrm{tr}}-\lambda_w}_\infty.
\]
The local-average calculation in the proof of
\Cref{lem:double-kernel-bias} gives
\[
\norm{\lambda_{h,n}^{\mathrm{tr}}
-\lambda_{w,n}^{\mathrm{tr}}}_\infty
\leq
\omega_{\lambda_{w,n}^{\mathrm{tr}}}(h_Z+h_U).
\]
Adding and subtracting \(\lambda_w\) at the two arguments in this modulus
shows that
\[
\omega_{\lambda_{w,n}^{\mathrm{tr}}}(r)
\leq\omega_{\lambda_w}(r)+2b_n.
\]
The same local-average representation yields
\[
\norm{\lambda_{h,n}^{\mathrm{tr}}}_\infty
\leq\norm{\lambda_{w,n}^{\mathrm{tr}}}_\infty
\leq\norm{\lambda_w}_\infty+b_n,
\]
which is uniformly bounded for all sufficiently large \(n\).

The truncated response satisfies
\[
Y_{i,h_U,n}^{\mathrm{tr}}(u)
=
\ind{M(D_i)\leq M_n}Y^w_{i,h_U}(u),
\]
so its conditional second moment is at most \(C_Yh_U^{-2}\), while
\(\mu_{i,n}^{w,\mathrm{tr}}(\OmegaL)\leq M_n\). Since
\(h_Z^dh_U^2\leq1\) eventually, the assumed variable-envelope condition
implies \(M_n\log n/n\to0\); together with \(M_n\uparrow\infty\), this gives
\(1\leq M_n\leq n\) eventually. We may therefore apply
\Cref{lem:numerator-concentration}. With failure probability \(\delta/2\),
\begin{equation}\label{eq:truncated-uniform-numerator}
\norm{\widehat N_n^{\mathrm{tr}}-N_{h,n}^{\mathrm{tr}}}_\infty
\leq
C\left\{
\sqrt{\frac{L_{n,\delta}}{nh_Z^dh_U^2}}
+\frac{M_nL_{n,\delta}}{nh_Z^dh_U^2}
\right\},
\qquad
L_{n,\delta}:=\log\frac{Cn}{\delta h_Z^dh_U^2}.
\end{equation}
With another failure probability \(\delta/2\),
\Cref{lem:denominator-concentration} gives
\[
\norm{\widehat D_n-D_h}_\infty
\leq Cr_{Z,n}(h_Z,\delta),
\qquad
\inf_z\widehat D_n(z)\geq\frac{c_Z}{2}
\]
for all sufficiently large \(n\). On the intersection of these two events,
the exact ratio identity used in the proof of
\Cref{thm:supnorm-rate} gives
\[
\norm{\widehat\lambda_{w,n}^{\mathrm{tr}}
-\lambda_{h,n}^{\mathrm{tr}}}_\infty
\leq
\frac{2}{c_Z}\left\{
\norm{\widehat N_n^{\mathrm{tr}}-N_{h,n}^{\mathrm{tr}}}_\infty
+\norm{\lambda_{h,n}^{\mathrm{tr}}}_\infty
\norm{\widehat D_n-D_h}_\infty
\right\}.
\]
Because \(M_n\geq1\) eventually and \(h_U\leq1\),
the denominator rate is absorbed by the two terms in
\eqref{eq:truncated-uniform-numerator}. Combining the stochastic ratio
bound with the two deterministic bias bounds yields
\begin{equation}\label{eq:truncation-proof-bound}
\norm{\widehat\lambda_{w,n}^{\mathrm{tr}}-\lambda_w}_\infty
\leq C\left\{
\omega_{\lambda_w}(h_Z+h_U)+b_n
+\sqrt{\frac{L_{n,\delta}}{nh_Z^dh_U^2}}
+\frac{M_nL_{n,\delta}}{nh_Z^dh_U^2}
\right\}.
\end{equation}
This event has probability at least \(1-\delta\). On \(\mathcal A_n\) the
same bound holds for the original estimator; the total failure probability
is at most \(\delta+n\Pp\{M(D)>M_n\}\). Finally,
\(L_{n,\delta}=O(\log n)\) by
\eqref{eq:bandwidth-log-consequence}, and every term on the right-hand side
of \eqref{eq:truncation-proof-bound} tends to zero by the assumptions.
The claimed convergence in probability follows.
\end{proof}

\subsection[Proof of the conditionally Poisson proposition]
{Proof of \Cref{prop:poisson-truncation}}
\label{app:poisson-truncation}

\begin{proof}[Proof of \Cref{prop:poisson-truncation}]
Write $W:=\norm{w}_\infty$ and
$\Lambda_*:=\norm{\lambda_w}_\infty$.
For every fixed $t>0$, the
Poisson exponential formula and
$(e^{tr}-1)/r\leq te^{tW}$ for $0<r\leq W$ give, uniformly in $z$,
\[
\E[e^{tM(D)}\mid Z=z]
=\exp\!\left\{\int_{\OmegaLo}
\frac{e^{tw(u)}-1}{w(u)}\lambda_w(z,u)\dd u\right\}
\leq
\exp\!\left\{te^{tW}\Lambda_*\operatorname{Vol}(\OmegaL)\right\}
=:C_t.
\]
For a sigma-finite intensity, the identity follows first on an increasing
finite-intensity exhaustion of $\OmegaLo$ and then by monotone convergence.
Consequently,
\[
\sup_z\Pp\{M(D)>T\mid Z=z\}\leq C_te^{-tT}.
\]
The Mecke equation \citep[Theorem~4.1]{last2017lectures} gives both the tail
bias density
\[
\lambda_{w,T}^{\mathrm{tail}}(z,u)
=\lambda_w(z,u)\Pp\{M(D)+w(u)>T\mid Z=z\},
\qquad
\norm{\lambda_{w,T}^{\mathrm{tail}}}_\infty
\leq \Lambda_*C_te^{tW}e^{-tT},
\]
and, for $Y_{h_U}^w(u):=\int\kappa_{U,h_U}(u,v)w(v)\dd D(v)$,
\begin{align*}
\E[(Y_{h_U}^w(u))^2\mid Z=z]
&=\left(\int\kappa_{U,h_U}(u,v)\lambda_w(z,v)\dd v\right)^2\\
&\quad+\int\kappa_{U,h_U}(u,v)^2w(v)\lambda_w(z,v)\dd v
\leq \Lambda_*^2+CW\Lambda_*h_U^{-2}
\leq Ch_U^{-2}.
\end{align*}
Thus $M_n=A\log n$ with $tA>2$ verifies the probability and tail-bias
requirements of \Cref{prop:truncation}; its variable-envelope requirement is
satisfied whenever $nh_Z^dh_U^2/(\log n)^2\to\infty$.
The conclusion follows from \Cref{prop:truncation}.
\end{proof}

\subsection[Proof of the trimming proposition]{Proof of \Cref{prop:trimmed}}\label{app:trimmed}

\begin{proof}[Proof of \Cref{prop:trimmed}]
Fix \(\varepsilon>0\) and take
\(A_\varepsilon,m_{\varepsilon,n},\delta_{\varepsilon,n}\) from the
local-mass condition. In the proofs of
\Cref{lem:denominator-concentration,lem:numerator-concentration}, the design
lower bound is used only to turn a population denominator bound into an
empirical one; the concentration calculations themselves require only
\(p_Z\leq C_Z\). Hence there is an event \(\mathcal E_n\), with
\(\Pp(\mathcal E_n)\geq1-2\delta_{\varepsilon,n}\), on which
\begin{align}
\norm{\widehat D_n-D_h}_\infty
&\leq Cr_{Z,n}(h_Z,\delta_{\varepsilon,n})=:S_{D,n},
\label{eq:trimmed-den-conc}\\
\norm{\widehat N_n-N_h}_\infty
&\leq Cr_n(h_Z,h_U,\delta_{\varepsilon,n})=:S_{N,n}.
\label{eq:trimmed-num-conc}
\end{align}
Since \(h_U\leq1\) eventually, \(S_{D,n}\leq CS_{N,n}\), and the
local-mass assumption implies
\[
\frac{S_{D,n}}{m_{\varepsilon,n}}\to0,
\qquad
\frac{S_{N,n}}{m_{\varepsilon,n}}\to0.
\]

For \(z\in A_\varepsilon\), define
\(\lambda_h(z,u):=N_h(z,u)/D_h(z)\). The local-average calculation in the
proof of \Cref{lem:double-kernel-bias} uses no global design lower bound once
\(D_h(z)>0\), and therefore gives
\begin{equation}\label{eq:trimmed-local-bias}
\sup_{z\in A_\varepsilon,u\in\OmegaL}
|\lambda_h(z,u)-\lambda_w(z,u)|
\leq b_n,
\qquad
b_n:=\omega_{\lambda_w}(h_Z+h_U)\to0,
\end{equation}
together with \(0\leq\lambda_h\leq\|\lambda_w\|_\infty\).

On \(\mathcal E_n\), for all sufficiently large \(n\) and every
\(z\in A_\varepsilon\),
\[
\widehat D_n(z)
\geq D_h(z)-S_{D,n}
\geq\frac12m_{\varepsilon,n}
\geq\rho_n.
\]
Thus the denominator floor is inactive on \(A_\varepsilon\). Using
\(N_h=\lambda_hD_h\), the same exact ratio identity as in
\Cref{thm:supnorm-rate} yields
\begin{align}
\sup_{z\in A_\varepsilon,u\in\OmegaL}
|\widehat\lambda^{\rho_n}_{w,n}(z,u)-\lambda_h(z,u)|
&\leq
\frac{2}{m_{\varepsilon,n}}
\left\{S_{N,n}
+\|\lambda_w\|_\infty S_{D,n}\right\}\nonumber\\
&\leq
C\frac{r_n(h_Z,h_U,\delta_{\varepsilon,n})}
{m_{\varepsilon,n}}.
\label{eq:trimmed-ratio}
\end{align}
Together with \eqref{eq:trimmed-local-bias}, this bounds the un-clipped
error on \(A_\varepsilon\) by
\[
\zeta_{\varepsilon,n}
:=
C\frac{r_n(h_Z,h_U,\delta_{\varepsilon,n})}
{m_{\varepsilon,n}}+b_n
\longrightarrow0.
\]

The clipping map \(x\mapsto x\wedge B\) is one-Lipschitz on
\([0,\infty)\), and \(0\leq\lambda_w<B\). Clipping therefore does not
increase the error on \(A_\varepsilon\); on its complement, both the clipped
estimator and the target belong to \([0,B]\). Consequently, on
\(\mathcal E_n\),
\begin{equation}\label{eq:trimmed-integrated-bound}
\int_\calZ\sup_u
|\widehat\lambda^{\rho_n,B}_{w,n}(z,u)-\lambda_w(z,u)|
\dd P_Z(z)
\leq
\zeta_{\varepsilon,n}+B\varepsilon.
\end{equation}
Given \(\eta>0\), first choose
\(\varepsilon<\eta/(2B)\), and then \(n\) so large that
\(\zeta_{\varepsilon,n}<\eta/2\). The probability that the integrated loss
exceeds \(\eta\) is then at most
\(2\delta_{\varepsilon,n}\to0\), which proves the claim.
\end{proof}

The local-mass condition holds, for example, when $p_Z$ vanishes only on a
$P_Z$-null boundary region at a polynomial rate, with $A_\varepsilon$ a
slightly shrunken version of $\calZ$. Equation
\eqref{eq:trimmed-integrated-bound} also shows that a quantitative choice of
$\varepsilon=\varepsilon_n$ yields a convergence rate.

\clearpage
\section{Bandwidth selection by leave-one-out risk estimation}\label{sec:cv}

The bandwidth schedules of \Cref{cor:holder-rate} determine the rate
exponents but leave their multiplicative constants unspecified, and the
smoothness indices are rarely known in practice.
We discuss now a practical approach to select $(h_Z,h_U)$ via leave-one-out cross-validation.

For candidate bandwidths $(h_Z,h_U)$ and each $i$, let
$\widehat\lambda^{(-i)}$ be the estimator obtained by applying
\eqref{eq:nw-estimator} to the $n-1$ observations with index different from
$i$. Define its leave-one-out risk by
\[
R_{n-1}(h_Z,h_U)
:=
\E\int_{\OmegaL}
\left(
\widehat\lambda^{(-1)}(Z_1,u)-\lambda_w(Z_1,u)
\right)^2
\dd u.
\]
Equivalently, this is the integrated squared error of an estimator trained
on $n-1$ observations, averaged over both the training sample and an
independent covariate drawn from $P_Z$.
Observe that the results above assess the estimator under supremum-norm or partial optimal transport losses. The leave-one-out criterion below instead targets a different quantity,
namely the $P_Z$-averaged integrated squared risk.
The two losses are related only in one direction. For every realization of the estimator,
\[
    \int_{\mathcal Z}\int_{\OmegaL} \left( \widehat\lambda(z,u)-\lambda_w(z,u) \right)^2 \dd u\,\dd P_Z(z) \leq \operatorname{Leb}(\OmegaL) \left\|\widehat\lambda-\lambda_w\right\|_\infty^2.
\]
Thus the uniform theory implies control of the integrated squared error,
but bandwidths minimizing the latter need not minimize the worst-case
error.

We use integrated squared error for bandwidth selection because it admits an exact leave-one-out identity. After expanding the square, the term $\int_{\OmegaL}\lambda_w(Z,u)^2\dd u$ is independent of the bandwidths, while the unknown cross term can be estimated without bias from a held-out weighted diagram through the defining conditional mean-measure identity. Unfortunately, no analogous unbiased decomposition is available for the supremum-norm loss.

Expanding the square gives
\begin{align}
R_{n-1}(h_Z,h_U)
&=
\E\int_{\OmegaL}
\widehat\lambda^{(-1)}(Z_1,u)^2
\dd u
-2\E\int_{\OmegaL}
\widehat\lambda^{(-1)}(Z_1,u)\lambda_w(Z_1,u)
\dd u
+
C_0,
\label{eq:loo-risk-expansion}
\end{align}
where $C_0$ does not depend on the bandwidths.

The cross term in \eqref{eq:loo-risk-expansion} can be estimated from the
held-out diagram. Indeed, conditional on the $n-1$ training observations and
on $Z_i$, the function
$u\mapsto\widehat\lambda^{(-i)}(Z_i,u)$ is fixed, while the conditional mean
measure of $\mu_i^w$ has density $\lambda_w(Z_i,\cdot)$. Therefore,
\begin{equation}\label{eq:cv-conditional-identity}
\E\left[
\int_{\OmegaL}
\widehat\lambda^{(-i)}(Z_i,u)
\dd\mu_i^w(u)
\Bigm|
\left\{(Z_j,D_j):j\neq i\right\},Z_i
\right]
=
\int_{\OmegaL}
\widehat\lambda^{(-i)}(Z_i,u)\lambda_w(Z_i,u)
\dd u.
\end{equation}
This suggests the criterion
\begin{equation}\label{eq:cv-criterion}
\mathrm{CV}_n(h_Z,h_U)
:=
\frac1n\sum_{i=1}^n
\left\{
\int_{\OmegaL}
\widehat\lambda^{(-i)}(Z_i,u)^2
\dd u
-
2\int_{\OmegaL}
\widehat\lambda^{(-i)}(Z_i,u)
\dd\mu_i^w(u)
\right\}.
\end{equation}
Both terms are computable. The first is an integral over the persistence
window, evaluated numerically if necessary, while
\[
\int_{\OmegaL}
\widehat\lambda^{(-i)}(Z_i,u)
\dd\mu_i^w(u)
=
\sum_{x\in D_i}
a_xw(x)\widehat\lambda^{(-i)}(Z_i,x)
\]
is a finite sum over the points of the held-out diagram.

Related leave-one-out criteria for density
estimation are studied in
\citet{rudemo1982empirical,bowman1984alternative}, and related criteria for
spatial-intensity estimation appear in \citet{cronie2018non}.

\begin{proposition}[Unbiased leave-one-out risk identity]
\label{prop:cv-risk}
Assume that
\[
\E\int_{\OmegaL}\lambda_w(Z_1,u)^2\dd u<\infty,
\qquad
\E\int_{\OmegaL}
\widehat\lambda^{(-1)}(Z_1,u)^2
\dd u<\infty.
\]
Then, for every fixed candidate pair $(h_Z,h_U)$,
\[
\E\,\mathrm{CV}_n(h_Z,h_U)
=
R_{n-1}(h_Z,h_U)-C_0.
\]
\end{proposition}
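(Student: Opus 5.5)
The argument rests on exchangeability, which reduces the average over $i$ to a single index, followed by the conditional identity \eqref{eq:cv-conditional-identity} and the tower property. The pairs $(Z_j,D_j)$ are i.i.d. and the leave-one-out estimator is a symmetric function of the retained sample, so the joint law of $\bigl(\widehat\lambda^{(-i)}(Z_i,\cdot),Z_i,\mu_i^w\bigr)$ does not depend on $i$. Consequently
\[
\E\,\mathrm{CV}_n(h_Z,h_U)
=\E\int_{\OmegaL}\widehat\lambda^{(-1)}(Z_1,u)^2\dd u
-2\,\E\int_{\OmegaL}\widehat\lambda^{(-1)}(Z_1,u)\dd\mu_1^w(u),
\]
provided each expectation is finite. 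The first expectation is finite by the second integrability hypothesis. It is also exactly the first term of \eqref{eq:loo-risk-expansion}.

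For the cross term, I condition on $\mathcal F:=\sigma(\{(Z_j,D_j):j\neq1\},Z_1)$. Given $\mathcal F$, the function $g(u):=\widehat\lambda^{(-1)}(Z_1,u)$ is fixed, nonnegative and measurable. By independence, the conditional law of $\mu_1^w$ given $\mathcal F$ equals its law given $Z_1$, so its mean measure is $\lambda_w(Z_1,u)\dd u$. For the nonnegative measurable $g$, the identity $\E[\int g\dd\mu_1^w\mid\mathcal F]=\int g(u)\lambda_w(Z_1,u)\dd u$ follows in three steps: it holds for indicators by definition of the conditional mean measure, it extends to simple functions by linearity, and it extends to all such $g$ by monotone convergence. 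This is \eqref{eq:cv-conditional-identity}. Because the estimator is nonnegative, Tonelli applies throughout and no sign issues arise.

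Taking expectations via the tower property gives $\E\int g\dd\mu_1^w=\E\int g\,\lambda_w(Z_1,\cdot)\dd u$. The right-hand side is finite by Cauchy--Schwarz on $\OmegaL\times$(probability space), using the two square-integrability hypotheses. Hence the cross term of $\E\,\mathrm{CV}_n$ matches the middle term of \eqref{eq:loo-risk-expansion}. The same finiteness, together with $C_0=\E\int\lambda_w(Z_1,u)^2\dd u<\infty$, justifies expanding the square in \eqref{eq:loo-risk-expansion} termwise. It follows that $\E\,\mathrm{CV}_n=R_{n-1}-C_0$.

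The only delicate point is measurability of $(u,\omega)\mapsto\widehat\lambda^{(-1)}(Z_1,u)$, including the zero-denominator convention, so that the conditioning and Fubini steps are legitimate. I would verify this directly from the explicit finite-sum formula for the estimator, which is jointly measurable.
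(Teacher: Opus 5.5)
Your proposal is correct and follows the paper's argument: the conditional Campbell identity given the training sample and $Z_i$, Cauchy--Schwarz for integrability of the cross term, and symmetry of the leave-one-out fit under i.i.d.\ sampling to reduce to a single index. The differences are cosmetic. You invoke exchangeability at the outset, whereas the paper averages the per-index identities first. You also spell out the indicator-to-function extension and the measurability check, which the paper folds into ``the definition of conditional intensity.''
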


We minimize \eqref{eq:cv-criterion} over a grid of bandwidth pairs, allowing
$h_Z$ and $h_U$ to be tuned separately. Although \Cref{prop:cv-risk} is stated for leave-one-out cross-validation, the same conditional calculation applies to $K$-fold cross-validation: one replaces $\widehat\lambda^{(-i)}$ by the estimator trained without the fold containing observation $i$, and obtains the corresponding identity for the average fold-specific hold-out risk. This is the computational version used in \Cref{sec:sim-cv}.

The proposition establishes unbiasedness of the criterion for the hold-out
risk, up to the bandwidth-independent constant $C_0$, which implies that the
minimizers of $\E\,\mathrm{CV}_n(h_Z,h_U)$ coincide with the minimizers of
the leave-one-out risk $R_{n-1}(h_Z,h_U)$. It does not by itself
give an oracle inequality or an adaptive convergence rate for the selected
bandwidths.

\subsection[Proof of the cross-validation proposition]{Proof of \Cref{prop:cv-risk}}\label{app:cv}

\begin{proof}[Proof of \Cref{prop:cv-risk}]
Fix an index $i$, and let $\mathcal T_i$ denote all observations used to
construct $\widehat\lambda^{(-i)}$, together with any fold assignment or
auxiliary randomization used by the fitting rule. In the leave-one-out setting $\mathcal T_i = \{1 \le j \le n, j \ne i\}$.

By construction, $(Z_i,D_i)$ is independent of the training information
$\mathcal T_i$.  More precisely, conditional on $Z_i$ and $\mathcal T_i$,
the conditional law of $D_i$ is still the law of $D$ given $Z=Z_i$, while
$\widehat\lambda^{(-i)}$ is fixed.
By the definition of conditional intensity,
\begin{equation}\label{eq:cv-conditional-campbell}
 \E\left[\int_{\OmegaL}\widehat\lambda^{(-i)}(Z_i,u)\dd\mu_i^w(u)
 \Bigm|Z_i,\mathcal T_i\right]
 =\int_{\OmegaL}\widehat\lambda^{(-i)}(Z_i,u)\lambda_w(Z_i,u)\dd u.
\end{equation}
Applying the Cauchy--Schwarz inequality (first on $L^2(\OmegaL)$ and then for expectations)
\begin{align*}
 \E\int_{\OmegaL}|\widehat\lambda^{(-i)}(Z_i,u)\lambda_w(Z_i,u)|\dd u
 &\leq\E\left[
 \left(\int\widehat\lambda^{(-i)}(Z_i,u)^2\dd u\right)^{1/2}
 \left(\int \lambda_w(Z_i,u)^2\dd u\right)^{1/2}\right]\\
 &\leq
 \left(\E\int\widehat\lambda^{(-i)}(Z_i,u)^2\dd u\right)^{1/2}
 \left(\E\int\lambda_w(Z_i,u)^2\dd u\right)^{1/2}
 <\infty,
\end{align*}
which shows that \eqref{eq:cv-conditional-campbell} is integrable and justifies interchanging expectations and integrals.

The expected value of the contribution of the $i$th observation to the cross-validation score $\mathrm{CV}_n(h_Z,h_U)$ is therefore
\begin{align*}
 &\E\left[\int\widehat\lambda^{(-i)}(Z_i,u)^2\dd u
 -2\int\widehat\lambda^{(-i)}(Z_i,u)\dd\mu_i^w(u)\right]\\
 &\quad=
 \E\int\left(\widehat\lambda^{(-i)}(Z_i,u)^2
 -2\widehat\lambda^{(-i)}(Z_i,u)\lambda_w(Z_i,u)\right)\dd u.
\end{align*}
Use the elementary identity $a^2-2ab=(a-b)^2-b^2$ pointwise, and then
integrate, to get
\begin{equation}\label{eq:cv-one-index-risk}
 \E\left[\int(\widehat\lambda^{(-i)})^2
 -2\int\widehat\lambda^{(-i)}\dd\mu_i^w\right]
 =\E\int(\widehat\lambda^{(-i)}(Z_i,u)-\lambda_w(Z_i,u))^2\dd u
 -\E\int\lambda_w(Z_i,u)^2\dd u.
\end{equation}

Averaging \eqref{eq:cv-one-index-risk} over $i$ gives
\begin{align}
 \E\mathrm{CV}_n(h_Z,h_U)
 &=\frac1n\sum_{i=1}^n
 \E\int_{\OmegaL}
 (\widehat\lambda^{(-i)}(Z_i,u)-\lambda_w(Z_i,u))^2\dd u
 \nonumber\\
 &\quad-\E\int_{\OmegaL}\lambda_w(Z,u)^2\dd u.
 \label{eq:cv-average-risk}
\end{align}
This is the general average-risk identity displayed in the proposition.
For the leave-one-out rule used here, the fitting map is
permutation-equivariant: relabeling the observations only relabels the held-out
fit.  The same is true for equal-sized folds when the fold construction and
fitting rule are permutation-equivariant.  Since the observations are
identically distributed, the $n$ risks in the sum in
\eqref{eq:cv-average-risk} are then equal.  In that case their average equals
the risk for index one, giving the single-risk reduction stated after the
display in the proposition.

Finally, the last term in \eqref{eq:cv-average-risk} involves only the true
conditional intensity and the design distribution.  In particular, it does
not depend on $(h_Z,h_U)$.  Subtracting the same constant from every candidate
risk does not change its minimizers, which proves the bandwidth-selection
claim.
\end{proof}

\clearpage
\section{Simulation studies}\label{sec:simulations}\label{app:simulation-results}

We report two studies. The first validates the estimator \eqref{eq:nw-estimator} against an analytically known conditional intensity, across data-generating processes, covariate dimensions, sample sizes, and bandwidth schedules derived from \Cref{cor:holder-rate}. The second examines bandwidth selection by the unbiased-risk criterion of \Cref{sec:cv}; in this design it selects the oracle candidate from the chosen grid. The experiments were run with fixed random seeds in a plain NumPy implementation of the estimators; code reproducing every table is provided in the supplementary material.

\subsection{Forward estimation with an exact conditional intensity}\label{sec:sim-forward}

\paragraph{Design rationale.}
A recurring pitfall in evaluating intensity estimators on simulated diagrams is that the ``ground truth'' is itself produced by smoothing a large Monte Carlo sample, and therefore depends on the smoothing bandwidth: the empirical loss then compares the estimator with a moving oracle, and the resulting curves need not decrease in $n$ even for a consistent estimator. The present design avoids this artefact by generating diagrams directly from a conditional Poisson model whose weighted intensity is available in closed form, so that all reported errors are computed against a fixed analytical function.

\paragraph{Data-generating processes.}
For each observation, a covariate $Z_i\sim\mathrm{Uniform}([0,1]^{d})$ is drawn, with $d\in\{1,2,4\}$. Conditionally on $Z_i=z$, the diagram is a Poisson point process on the birth--persistence square $[0,1]^2$: the number of points is $N_i\mid Z_i=z\sim\mathrm{Poisson}(m(z))$ and, given $N_i$, the points $(B_{ij},P_{ij})$ are i.i.d.\ from the density
\[
f_z(b,p)=1+a(z)\cos(2\pi b)\cos(2\pi p)+c(z)\sin(2\pi b)\sin(2\pi p),
\qquad (b,p)\in[0,1]^2,
\]
whose trigonometric terms integrate to zero, so $f_z$ integrates to one; the coefficients are bounded so that $f_z$ remains positive. The covariate enters through the smooth scalar index $\rho(z)=z_1$ for $d=1$ and
\[
\rho(z)=0.65\frac{\sum_{j=1}^{d}jz_j}{\sum_{j=1}^{d}j}
+0.35\frac1d\sum_{j=1}^{d}\sin^2(\pi z_j)
\qquad\text{for }d>1.
\]
Three data-generating processes are considered: \emph{location}, in which the total mass is constant and the covariate changes the shape of $f_z$; \emph{mass}, in which the shape is constant and the covariate changes the total mass; and \emph{mixed}, in which both vary. Their exact definitions are collected in \Cref{tab:dgp}. The estimator uses the persistence weight $w(b,p)=p$, which vanishes linearly on the diagonal $\{p=0\}$ in accordance with \Cref{ass:weight}, so the known target is the weighted conditional intensity
\[
\lambda_w(z,b,p)=m(z)pf_z(b,p).
\]
Because $N_i$ is Poisson, the weighted mass $M(D)$ is unbounded and \Cref{ass:envelope}\textup{(E1)} fails; the design is instead covered by the moment alternative of \Cref{prop:truncation}, since $M(D)\leq N$ has moments of every order and the points have a common bounded density.

\paragraph{Estimator and bandwidths.}
The estimator is \eqref{eq:nw-estimator} with Epanechnikov product kernels in both the covariate and the diagram coordinates. Bandwidths follow the schedule \eqref{eq:holder-bandwidths} of \Cref{cor:holder-rate} with covariate dimension $d$, diagram dimension two, and smoothness parameters $s_Z=s_U=1$, for which both exponents simplify to $1/(d+4)$:
\[
h_Z=h_{\mathrm{birth}}=h_{\mathrm{persistence}}=c_{\mathrm{bw}}\left(\frac{\log n}{n}\right)^{1/(d+4)},
\qquad
c_{\mathrm{bw}}\in\{0.10,0.25,0.50,0.75\}.
\]
The multiplier $c_{\mathrm{bw}}$ scans a range from severe undersmoothing to the vicinity of the rate-optimal constant. The full design crosses the three data-generating processes with $d\in\{1,2,4\}$, sample sizes $n\in\{10^2,10^3,10^4,10^5\}$, and the four multipliers, with $100$ Monte Carlo replicates per cell.

\paragraph{Evaluation.}
For $d=1$, we use nine equally spaced covariate values in $[0.10,0.90]$. For $d>1$, we use deterministic Halton points mapped to $[0.10,0.90]^{d}$. Diagram-space errors are computed on an interior birth--persistence grid on $[0.10,0.90]^2$. The evaluation-point normalization $\int\kappa_{U,h}(u,v)\dd v=1$ corrects the loss of kernel mass at the boundary, but a local-constant estimator can still have first-order smoothing bias there for a nonconstant target; including the outer edge would therefore make the reported maximum primarily a boundary diagnostic. Writing $Z_{\mathrm{ev}}$ for the evaluation covariates, $U_{\mathrm{ev}}$ for the interior grid with cell area $A$, and $e_z(u):=\widehat\lambda_{w,n}(z,u)-\lambda_w(z,u)$, the reported losses are
\begin{align*}
\text{integrated sup loss}
&=\frac{1}{|Z_{\mathrm{ev}}|}\sum_{z\in Z_{\mathrm{ev}}}\max_{u\in U_{\mathrm{ev}}}|e_z(u)|,
\qquad
\text{relative sup loss}
=\frac{\text{int.\ sup loss}}{\frac{1}{|Z_{\mathrm{ev}}|}\sum_{z}\max_{u}|\lambda_w|},
\\[2pt]
\text{L1 loss}
&=\frac{1}{|Z_{\mathrm{ev}}|}\sum_{z\in Z_{\mathrm{ev}}}A\sum_{u\in U_{\mathrm{ev}}}|e_z(u)|,
\qquad\quad
\text{ISE}
=\frac{1}{|Z_{\mathrm{ev}}|}\sum_{z\in Z_{\mathrm{ev}}}A\sum_{u\in U_{\mathrm{ev}}}e_z(u)^2 .
\end{align*}

\paragraph{Results.}
The endpoint summaries at the multiplier minimizing mean integrated sup loss
are reported in
\Cref{tab:empirical-verifications}; the complete losses at $n=10^5$ and the
empirical rate exponents obtained by regressing log loss on $\log n$ within
each cell are reported in \Cref{tab:forward-final,tab:forward-rates}.

\paragraph{Findings.}
The main diagnostic is positive: for every data-generating process and covariate dimension, the empirical sup-loss exponents at multipliers $0.50$ and $0.75$ are positive, in the direction predicted by \Cref{thm:supnorm-rate,cor:holder-rate}, and the relative sup loss at $n=10^5$ is between $3\%$ and $7\%$ for $d\in\{1,2\}$. Three qualitative patterns deserve comment. First, errors grow with $d$ at fixed $n$, as the effective dimension $d+2$ of the sup-norm problem dictates. Second, the smallest multiplier $0.10$ at $d=4$ yields \emph{negative} empirical rates: at these sample sizes such bandwidths leave many evaluation points with nearly empty covariate neighbourhoods, so the variance term of \eqref{eq:supnorm-rate} dominates and the loss has not yet entered its asymptotic regime---a finite-sample manifestation of the bandwidth condition $nh_Z^dh_U^2/\log n\to\infty$ in \Cref{ass:kernels}. Third, the empirical exponents themselves are rough, being estimated from only four sample sizes, and we deliberately refrain from a quantitative comparison with the exponents of \Cref{cor:holder-rate,cor:pot-holder-rate}; the present study validates the qualitative behaviour against a fixed analytical target and shows how a bandwidth-dependent reference can obscure convergence by making the target move with the bandwidth.

\subsection{Data-driven bandwidth selection}\label{sec:sim-cv}

We next apply the unbiased-risk criterion \eqref{eq:cv-criterion} within the exact-intensity design, where the integrated squared error of every candidate bandwidth can be computed against the analytical target. The configuration is the \emph{mixed} process of \Cref{tab:dgp} with $d=1$, $n=1{,}000$, and $12$ Monte Carlo replicates. Candidate bandwidths are $h_Z=h_U=c_{\mathrm{bw}}(\log n/n)^{1/5}$ with multipliers $c_{\mathrm{bw}}\in\{0.10,0.25,0.50,0.75,1.00,1.50\}$ (base value $0.370$ at $n=1{,}000$). The criterion is evaluated in a five-fold scheme: for each fold and each candidate, the held-out term of \eqref{eq:cv-criterion} uses a $26\times26$ midpoint quadrature of the full window for the quadratic integral and the exact finite sum over the held-out diagram points for the linear term. The comparison metric is the interior integrated squared error of the corresponding full-data fit, computed as in \Cref{sec:sim-forward}.

The risk profile over multipliers is sharply convex, with the oracle at $c_{\mathrm{bw}}=0.50$ in every replicate. The criterion selected this oracle multiplier in $12$ of $12$ replicates, so the cross-validated and oracle fits both have mean integrated squared error $0.124$ on this grid. The candidate grid is coarse, and the comparison evaluates the selected bandwidth on the full-sample fit while the criterion targets hold-out risk. The experiment nevertheless shows that the criterion avoids both the badly undersmoothed multipliers, whose risk is one to two orders of magnitude larger, and the oversmoothed ones. The complete risk profile is reported in \Cref{tab:cv}.

\FloatBarrier
\subsection{Additional numerical results}

This subsection collects the exact data-generating processes and the full numerical
summaries for the two simulation studies.

\begin{table}[ht!]
\centering
\caption{Conditional Poisson data-generating processes for the forward study.}
\label{tab:dgp}
\begin{tabular}{lccc}
\toprule
DGP & $m(z)$ & $a(z)$ & $c(z)$\\
\midrule
location & $9$ & $0.30(2\rho(z)-1)$ & $0.20\sin(2\pi\rho(z))$\\
mass & $7+5\rho(z)$ & $0.22$ & $-0.16$\\
mixed & $8+3\sin^2(\pi\rho(z))$ & $0.28\sin(2\pi\rho(z))$ & $0.22\cos(2\pi\rho(z))$\\
\bottomrule
\end{tabular}
\end{table}

\begin{table}[ht!]
\centering
\caption{Forward study: losses at $n=10^5$ with the Epanechnikov kernel, at the multiplier minimizing mean integrated sup loss per configuration. Averages over $100$ replicates.}
\label{tab:forward-final}
\begin{tabular}{lcccccc}
\toprule
DGP & $d$ & Multiplier & Integrated sup & Relative sup & L1 & ISE\\
\midrule
location & 1 & 0.75 & 0.320 & 0.035 & 0.048 & 0.007\\
location & 2 & 0.50 & 0.591 & 0.069 & 0.075 & 0.017\\
location & 4 & 0.50 & 1.383 & 0.165 & 0.142 & 0.062\\
mass     & 1 & 0.75 & 0.333 & 0.033 & 0.049 & 0.006\\
mass     & 2 & 0.50 & 0.623 & 0.061 & 0.081 & 0.019\\
mass     & 4 & 0.50 & 1.724 & 0.170 & 0.155 & 0.074\\
mixed    & 1 & 0.75 & 0.411 & 0.041 & 0.063 & 0.011\\
mixed    & 2 & 0.50 & 0.694 & 0.063 & 0.090 & 0.024\\
mixed    & 4 & 0.50 & 1.689 & 0.151 & 0.178 & 0.099\\
\bottomrule
\end{tabular}
\end{table}
\FloatBarrier

\Cref{tab:forward-rates} reports the empirical rate exponents summarized in
\Cref{sec:sim-forward}. They are obtained from only four sample sizes and serve as a
finite-sample diagnostic of the direction and dimension dependence predicted by the
theory.

\begin{table}[p]
\centering
\caption{Forward study: empirical rate exponents defined as minus the slope from regressing log loss on $\log n$ over $n\in\{10^2,10^3,10^4,10^5\}$, Epanechnikov kernel. Positive values indicate decreasing loss.}
\label{tab:forward-rates}
\small
\begin{tabular}{clcccc}
\toprule
$d$ & DGP & Multiplier & Sup rate & L1 rate & ISE rate\\
\midrule
1 & location & 0.10 & 0.272 & 0.243 & 0.495\\
1 & location & 0.25 & 0.210 & 0.245 & 0.491\\
1 & location & 0.50 & 0.331 & 0.300 & 0.618\\
1 & location & 0.75 & 0.385 & 0.364 & 0.755\\
1 & mass     & 0.10 & 0.273 & 0.243 & 0.494\\
1 & mass     & 0.25 & 0.211 & 0.244 & 0.489\\
1 & mass     & 0.50 & 0.362 & 0.295 & 0.615\\
1 & mass     & 0.75 & 0.406 & 0.364 & 0.770\\
1 & mixed    & 0.10 & 0.271 & 0.243 & 0.496\\
1 & mixed    & 0.25 & 0.212 & 0.244 & 0.490\\
1 & mixed    & 0.50 & 0.335 & 0.303 & 0.625\\
1 & mixed    & 0.75 & 0.365 & 0.343 & 0.717\\
\addlinespace
2 & location & 0.10 & 0.231 & 0.147 & 0.402\\
2 & location & 0.25 & 0.185 & 0.218 & 0.441\\
2 & location & 0.50 & 0.290 & 0.268 & 0.551\\
2 & location & 0.75 & 0.196 & 0.306 & 0.587\\
2 & mass     & 0.10 & 0.223 & 0.147 & 0.392\\
2 & mass     & 0.25 & 0.184 & 0.218 & 0.440\\
2 & mass     & 0.50 & 0.316 & 0.266 & 0.554\\
2 & mass     & 0.75 & 0.188 & 0.289 & 0.571\\
2 & mixed    & 0.10 & 0.230 & 0.154 & 0.403\\
2 & mixed    & 0.25 & 0.185 & 0.218 & 0.441\\
2 & mixed    & 0.50 & 0.291 & 0.273 & 0.562\\
2 & mixed    & 0.75 & 0.192 & 0.272 & 0.540\\
\addlinespace
4 & location & 0.10 & $-0.179$ & $-0.014$ & $-0.190$\\
4 & location & 0.25 & 0.094 & 0.136 & 0.267\\
4 & location & 0.50 & 0.178 & 0.222 & 0.442\\
4 & location & 0.75 & 0.086 & 0.210 & 0.348\\
4 & mass     & 0.10 & $-0.173$ & $-0.014$ & $-0.182$\\
4 & mass     & 0.25 & 0.100 & 0.139 & 0.276\\
4 & mass     & 0.50 & 0.177 & 0.216 & 0.434\\
4 & mass     & 0.75 & 0.081 & 0.187 & 0.331\\
4 & mixed    & 0.10 & $-0.172$ & $-0.017$ & $-0.185$\\
4 & mixed    & 0.25 & 0.098 & 0.138 & 0.273\\
4 & mixed    & 0.50 & 0.178 & 0.218 & 0.436\\
4 & mixed    & 0.75 & 0.089 & 0.175 & 0.324\\
\bottomrule
\end{tabular}
\end{table}

\begin{table}[!htbp]
\centering
\caption{Bandwidth selection in the exact-intensity design (mixed process, $d=1$, $n=1{,}000$, $12$ replicates): interior integrated squared error of the full-data fit at each fixed multiplier, at the multiplier selected by the unbiased-risk criterion \eqref{eq:cv-criterion}, and at the per-replicate oracle multiplier. Means (standard deviations).}
\label{tab:cv}
\begin{tabular}{lcc}
\toprule
Bandwidth multiplier & ISE & (sd)\\
\midrule
$0.10$ & 8.776 & 0.316\\
$0.25$ & 0.574 & 0.052\\
$0.50$ & 0.124 & 0.028\\
$0.75$ & 0.190 & 0.025\\
$1.00$ & 0.362 & 0.025\\
$1.50$ & 0.893 & 0.032\\
\midrule
Cross-validated & \textbf{0.124} & 0.028\\
Oracle & 0.124 & 0.028\\
\bottomrule
\end{tabular}
\end{table}

\FloatBarrier

\section{Additional details for the cerebral artery analysis}
\label{app:arteries-details}

This appendix gives the computational and inferential details behind \Cref{sec:arteries}.
The main text defines the scientific targets and reports the principal findings; here we
record data checks, tuning rules, the exact simultaneous statistic, secondary components,
robustness and backmapping diagnostics, and a fuller comparison with previous analyses.

\subsection{Data preparation and persistence construction}
\label{app:arteries-preparation}

The archive contains cross-sectional centreline reconstructions from 98 subjects, with no
repeated measurements of an individual. The deposited graphs contain approximately
\(10.95\) million vertices across \(509\) rooted components. Each component has
three-dimensional coordinates, edges, a parent relation, and exactly one supplied root;
components do not cross the four arterial-system labels.

For each subject, the preparation code checks that the undirected graph is a forest, that
the deposited parent links reproduce exactly the same edge set, that every connected
component has one and only one parentless vertex, and that every component belongs to a
single arterial-system label. The parentless vertex is used as the supplied root. The
polyline length of a system is the sum of Euclidean lengths of all of its centreline edges.

The same union--find lower-star algorithm is applied to \(f\) and \(-f\). The first pass
returns the finite sublevel (local-minimum) pairs, and the second returns the finite
superlevel (local-maximum) pairs. The minimum and maximum of each connected component
supply its global extended pair. The prepared table has \(369{,}136\) rows in total:
\(183{,}035\) finite local-minimum pairs, \(185{,}592\) finite local-maximum pairs, and
\(509\) component min--max pairs.

Persistences at most \(10^{-10}\) mm were treated as numerical zero. Repeating the
preparation with a zero floor produced the same row count; the smallest retained finite
persistence is approximately \(6.0\times10^{-9}\) mm. Every row stores zero-based
internal vertex indices and their one-based source/MATLAB counterparts. Local-maximum
rows additionally retain the merge edge, while global rows retain both extreme vertices.

\subsection{Subject-level measures}
\label{app:arteries-measures}

The raw and system-length-standardized measures are defined in
\Cref{sec:arteries-data}. In the implementation, a finite pair from system \(s\) has raw
weight \(p\) and standardized weight \(1000p/L_{is}\). The factor is attached before
systems are combined. Direct audits recover the raw total mass by summing persistences and
the standardized total mass by summing these four system-specific rates. All coordinate
columns remain in millimetres in both analyses.

\subsection{Bandwidth selection}
\label{app:arteries-bandwidths}

We estimate each conditional intensity with \eqref{eq:nw-estimator}, using an
Epanechnikov kernel in age and a boundary-corrected product Epanechnikov kernel on the
rectangular \((q,p)\) window. Diagram and age smoothing play different roles, so we tune
the two bandwidths sequentially for each finite field and each weighting scheme. The
diagram candidates are
\[
\mathcal H_U=\{6,7,8,9,10,11,12,13,14,15,16,17,18,20,22,24,27,30,34,38\}
\ \mathrm{mm},
\]
and the age candidates are \(\mathcal H_A=\{6,8,\ldots,40\}\) years.

\paragraph{Persistence-plane bandwidth.}
The first stage adapts the least-squares intensity criterion of
\citet{chazal2019density} to the signed old-minus-young contrast. Subjects at or below the
lower age quartile and at or above the upper age quartile form the two strata; the middle
subjects receive zero contrast weight during this stage. We generate \(50\) repeated
stratified five-fold splits at the subject level. For fold \(v\), system \(s\), and
candidate \(h_U\), let \(\widehat\Delta_{v,s,h_U}^{\mathrm{tr}}\) be the smoothed
old-minus-young contrast estimated from the training subjects, with each age group averaged
using weights that sum to one. Let \(\nu_{v,s}^{\mathrm{val}}\) be the corresponding
signed empirical measure from the validation subjects. The fold score is
\[
 \operatorname{CV}_{v}(h_U)
 =
 \sum_{s=1}^4
 \left\{
   \int_{\Omega_L}
     \bigl[\widehat\Delta_{v,s,h_U}^{\mathrm{tr}}(u)\bigr]^2\,\dd u
   -2\int_{\Omega_L}
     \widehat\Delta_{v,s,h_U}^{\mathrm{tr}}(u)
     \,\dd\nu_{v,s}^{\mathrm{val}}(u)
 \right\}.
\]
The validation-only squared norm does not depend on \(h_U\) and is omitted. The four
system scores are added after their separate calculation, so no cross-system products enter
the criterion. We average the five fold scores within each repetition and then average over
the \(50\) repetitions. The primary \(h_U\) minimizes this mean score. As a sensitivity
choice, we also record the largest candidate whose mean score is within one standard error
of the minimum.

\paragraph{Age bandwidth.}
After fixing \(h_U\) at its primary minimizer, the second stage uses all \(98\) subjects.
For a candidate \(h_A\), each subject is left out in turn and its four system-specific
persistence measures are predicted by an Epanechnikov-weighted average of the remaining
subjects, using age as the covariate. The observed response is a discrete weighted diagram
measure; the prediction is a smooth conditional-mean measure obtained after diagram
smoothing at the selected \(h_U\). Both are mapped to the same augmented Persistence
Sphere representation \citep{pegoraro2026persistence}. This representation is linear and
bi-continuous for the partial-optimal-transport topology, and its squared \(L^2\) distance
can be evaluated through inner products. It is substantially faster here than solving an
optimal-partial-transport problem for every subject and bandwidth candidate.

The loss for a held-out subject is the sum, over the four systems, of the squared sphere
\(L^2\) distance between its observed measure and its prediction. We select the candidate
with the smallest mean subject loss; candidates for which any leave-one-out age-kernel
denominator is zero are excluded. We also report
\[
 R^2_{\mathrm{sphere}}
 =1-
 \frac{\text{selected leave-one-out prediction loss}}
      {\text{leave-one-out mean-measure prediction loss}}.
\]
For raw local-minimum and local-maximum measures, these values are \(-0.092\) and
\(0.247\); for the length-standardized measures they are \(0.006\) and \(0.090\).
The held-out response is a discrete realization with substantial subject-to-subject
variation. The regression predicts its smooth conditional mean, so these modest values
indicate limited subject-level age predictability. The bandwidth stabilizes estimation of
that conditional mean, and the selected contrast is assessed by the subject bootstrap
described below.

\begin{table}[t]
\centering
\caption{Selected bandwidths and primary \(95\%\) simultaneous results. Ranges and
percentage changes describe the principal connected component after selection. The
standardized local-maximum set also has four smaller components, detailed in
\Cref{app:arteries-secondary}.}
\label{tab:arteries-primary}
\small
\resizebox{\textwidth}{!}{%
\begin{tabular}{llrrlccrr}
\toprule
Measure & Diagram & \(h_U\) & \(h_A\) & Joint \(95\%\) result & \(q\) (mm) & \(p\) (mm) & Count \(\Delta\) & Mass \(\Delta\)\\
\midrule
Raw & Local minimum & 22 & 18 & Negative principal & 41.3--97.7 & 0.0--35.8 & \(-25.8\%\) & \(-27.5\%\)\\
Raw & Local maximum & 13 & 18 & Negative principal & 17.7--96.9 & 0.0--64.5 & \(-24.3\%\) & \(-33.0\%\)\\
Length-standardized & Local minimum & 12 & 16 & None & -- & -- & -- & --\\
Length-standardized & Local maximum & 13 & 22 & Negative principal; four smaller & 71.2--85.9 & 0.0--14.7 & \(-13.2\%\) & \(-16.6\%\)\\
\bottomrule
\end{tabular}%
}
\end{table}

\subsection{Simultaneous bootstrap bands}
\label{app:arteries-inference}

For each fixed set of selected bandwidths, we draw \(4{,}999\) complete-subject bootstrap
samples. Sampling a subject retains all four labelled systems and every persistence pair
belonging to that subject. For each resample, the age-\(57\) minus age-\(31.25\) contrast is
recomputed on the common grid. The selected bandwidths are held fixed in this primary
bootstrap.

Inference is restricted to an age-independent grid set. A grid location enters the primary
set when (i) at least five subjects have a persistence pair with nonzero local kernel weight
at that location and (ii) the pooled intensity there is at least \(10^{-4}\) times the largest
pooled intensity in the same diagram. These conditions remove locations at which both the
estimated contrast and its standard error are effectively numerical zero. The rule is
constructed without using the age contrast.

Let \(\widehat s_g^m(u)\) denote the pointwise bootstrap standard error of the fitted
contrast. For bootstrap replicate \(\ell\), the simultaneous statistic is
\[
 T_\ell^m
 =
 \max_{g\in\{\mathrm{min},\mathrm{max}\}}
 \max_{u\in\mathcal I_g^m}
 \left|
 \frac{\widehat\Delta_{g,\ell}^{m,*}(u)-\widehat\Delta_g^m(u)}
      {\widehat s_g^m(u)}
 \right|,
\]
where \(\mathcal I_g^m\) is the age-independent inference grid. The empirical
\(1-\alpha\) quantile \(c_{1-\alpha}^m\) yields the band
\(\widehat\Delta_g^m(u)\pm c_{1-\alpha}^m\widehat s_g^m(u)\). The primary \(95\%\)
critical values are \(4.007\) for the raw measures and \(4.185\) for the
system-length-standardized measures.

The bootstrap keeps the selected bandwidths fixed. Its nominal statement concerns the
sampling variability of those smoothed contrasts. Neighbouring-bandwidth fits and
complete deletion retuning are reported separately in \Cref{app:arteries-robustness}.

\subsection{Primary numerical summaries and secondary components}
\label{app:arteries-secondary}

The standardized local-maximum mask has \(1{,}301\) pixels split across five connected
components. R1 has \(725\) pixels and contains \(18{,}861\) observed generators. R2 lies
at \(q=89.3\)--\(96.0\) mm and \(p=0\)--\(2.95\) mm; its features are extremely close
to the diagonal. R3 has nine pixels around \(q=70.35\)--\(72.03\) mm and
\(p=24.85\)--\(26.12\) mm but contains only one observed generator. R4 has \(488\)
pixels around \(q=32.86\)--\(44.23\) mm and \(p=48.86\)--\(62.77\) mm and is supported
by rare high-persistence features, observed in \(15\) of \(25\) younger-quartile subjects
and \(7\) of \(27\) older-quartile subjects. R5 is one grid pixel and contains no observed
generator at that exact location. We base the spatial interpretation on R1 because it is
densely populated and represented in every subject in both age tails. The complete five
components select \(22{,}918\) observed local maxima.

For the one-dimensional maximum-radius comparator, the selected radius bandwidth is
\(16\) mm for both weighting schemes. Numerical integration of the one-dimensional
surfaces recovers direct persistence mass with mean relative error about \(0.153\%\) and
maximum error below \(0.6\%\). No radius interval is selected at \(90\%\), \(95\%\), or
\(99\%\).

\subsection{Global minimum--maximum classes}
\label{app:arteries-global}

The third degree-zero extended-persistence class contains one pair per connected
component. Since Euclidean distance is measured from the supplied root, the global
minimum is zero and a component contributes the point \((0,R_{ic})\), where \(R_{ic}\)
is its largest Euclidean root distance and also the persistence of the global class. A
two-dimensional density is inappropriate because every such pair has \(q=0\). The
accompanying notebook therefore estimates a one-dimensional conditional intensity over
global persistence \(R_{ic}\), separately from the two finite local-extremum fields.

The deposited graphs contain \(509\) components. To distinguish the
largest-persistence tree in each labelled system from the remaining disconnected pieces,
the notebook analyses three populations: all \(509\) components; the
largest-persistence component in
each subject--system, giving \(98\times4=392\) components; and the remaining \(117\)
components. It also compares three measures. The unit-weighted measure
\(\sum_c\delta_{R_{ic}}\) estimates the distribution of component maximum radii without
confounding it with their magnitudes. The framework's persistence-weighted measure is
\(\sum_c R_{ic}\delta_{R_{ic}}\). Its system-length-standardized counterpart assigns
weight \(1000R_{ic}/L_{is}\) to a component in system \(s\). Persistence and age
bandwidths are selected separately for every population and measure, after which
\(4{,}999\) complete-subject bootstrap resamples provide simultaneous bands for the
age-\(57\) minus age-\(31.25\) contrast. We treat this analysis as its own inferential
family and compute its critical values separately from the two finite fields in
\Cref{app:arteries-inference}.

The principal result is already present in the unit-weighted analysis of the largest
component per subject--system. Its selected bandwidths are \(h_R=6\) mm and
\(h_A=24\) years. The \(95\%\) band is positive over \(77.3\)--\(91.5\) mm and negative
over \(97.9\)--\(104.6\) mm; both intervals remain selected at \(99\%\), with slightly
narrower boundaries. Because this population contains exactly four classes per subject,
the integrated unit-weighted intensity is fixed at four up to numerical integration. The
two intervals identify a change in the shape of the conditional distribution: the older
fitted surface has more global classes near \(80\)--\(90\) mm and fewer near \(100\) mm,
with four principal components per subject throughout the analysis.

Weighting the classes by \(R_{ic}\) selects the same intervals and changes the fitted
integrated global persistence from \(396.88\) to \(380.90\) mm, a \(4.0\%\) decrease.
The close agreement with the unit-weighted result is expected because the weight is the
same coordinate on which the one-dimensional intensity is evaluated, so both versions
describe the same redistribution. The length-standardized analysis retains both regions
and has a positive integrated change of \(13.2\%\). This total is sensitive to the
denominator: fitted total centreline length decreases by \(14.1\%\) between the two
evaluation ages. Its estimand is global persistence per unit system length; the
unnormalized fitted arterial extent decreases.

The all-component analysis recovers the same principal positive and negative regions, and
the \(117\) additional components have no selected interval at \(95\%\) under any of the
three measures. The largest trees carry the selected change, and the number of small
disconnected pieces does not contribute a selected interval. Exact backmapping gives
\(105\) largest-component maxima in the unit-weighted positive interval and \(118\) in
the negative interval. In root-centred coordinates, the two clouds overlap and follow the
outer arterial trajectory, with median root-centred \(Y\) coordinates of approximately
\(56.7\) and \(71.8\) mm. \Cref{fig:arteries-global} displays the conditional intensity,
the simultaneous contrast, and the root-relative spatial locations of these endpoints.

\begin{figure}[p]
\centering
\includegraphics[width=\textwidth]{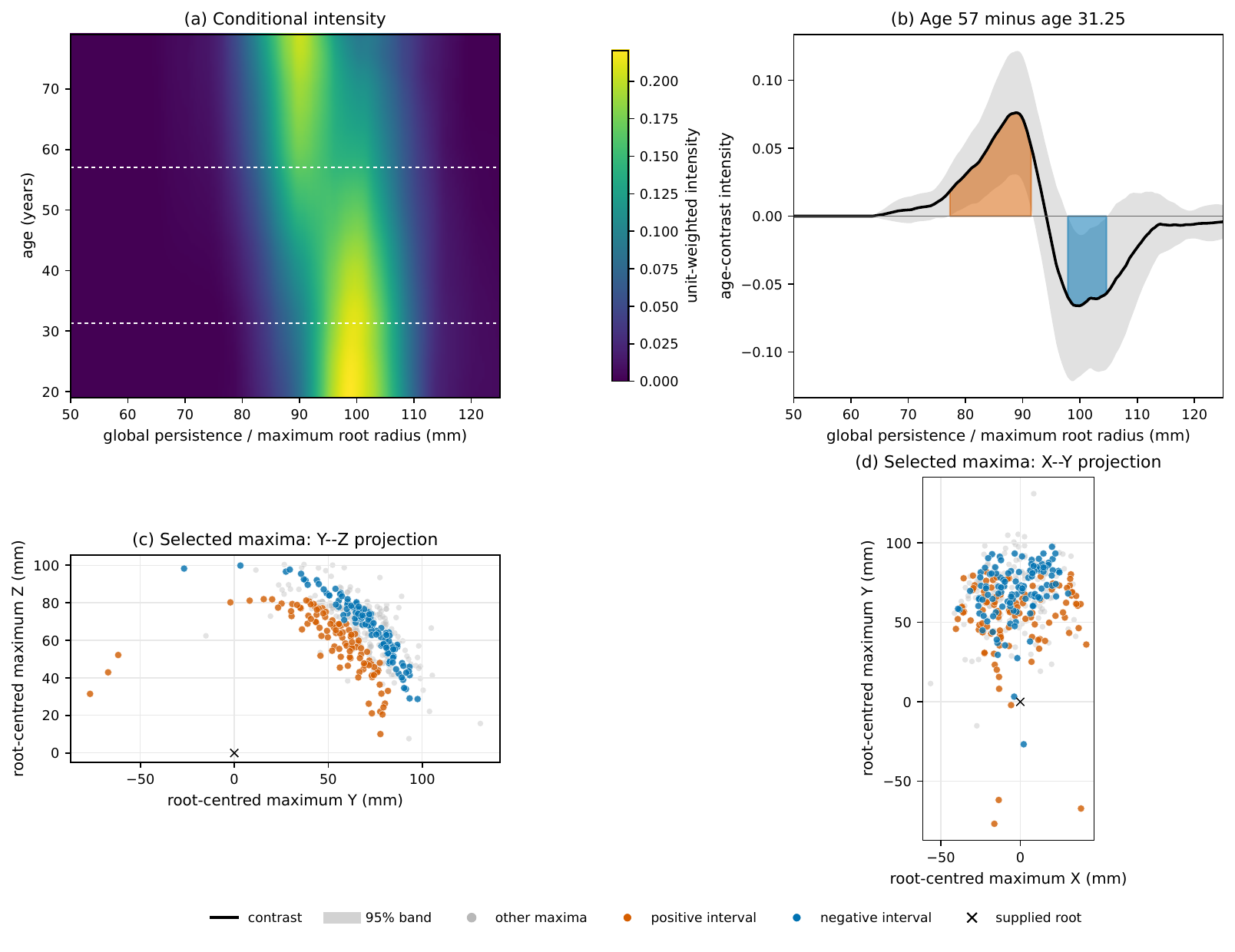}
\caption{Global minimum--maximum classes for the largest-persistence component in each
subject--system, using unit weights. Panels show the fitted conditional intensity, the
age-\(57\) minus age-\(31.25\) contrast and simultaneous \(95\%\) band, and root-relative
backmaps of the maxima in the selected positive and negative intervals.}
\label{fig:arteries-global}
\end{figure}

\begin{table}[ht!]
\centering
\caption{One-dimensional analysis of the global class for the largest-persistence
component in each subject--system. Intervals are selected by the separate \(95\%\)
simultaneous band for the age-\(57\) minus age-\(31.25\) contrast.}
\label{tab:arteries-global}
\small
\begin{tabular}{lrrll}
\toprule
Measure & \(h_R\) & \(h_A\) & Positive interval (mm) & Negative interval (mm)\\
\midrule
Unit-weighted count & 6 & 24 & 77.3--91.5 & 97.9--104.6\\
Persistence weighted & 6 & 24 & 77.3--91.5 & 97.9--104.6\\
Persistence per 1000 mm & 8 & 24 & 74.3--92.1 & 99.9--103.3\\
\bottomrule
\end{tabular}
\end{table}
\FloatBarrier

\subsection{Robustness analyses}
\label{app:arteries-robustness}

We varied the inference set by requiring at least ten rather than five contributing subjects
and by using pooled-intensity floors of \(10^{-4}\) and \(10^{-3}\). We also repeated the
simultaneous analysis at \(90\%\), \(95\%\), and \(99\%\). Across these choices, both raw
fields retain broad negative regions, the standardized local-minimum field remains empty,
and the standardized local-maximum contrast remains negative in the neighbourhood of R1.

The same neighbourhood is negative under adjacent persistence and age bandwidths and
when the displayed age anchors use lower and upper quintiles, quartiles, or tertiles. Its
broad location and sign are more stable than the ordering of individual boundary pixels.

The four labelled arterial systems were additionally fitted separately at the selected pooled
bandwidths. Summing their fitted intensity fields recovers the pooled field with relative
\(L^2\) discrepancy below \(10^{-7}\). Within each pooled primary region, all four
system-specific integrated contrasts have the same negative sign. This descriptive
decomposition shows that all four labelled systems contribute in the same direction. The
simultaneous calibration remains attached to the pooled analysis.

Finally, we deleted each subject in turn and reran both bandwidth-selection stages and the
final contrast fit. Age cutoffs and repeated folds were recomputed on the remaining \(97\)
subjects. The resulting contrast maps are highly correlated with the full-data maps, and
every grid location in the full-data R1 negative set remains negative in every deletion fit.

\subsection{Backmapping and spatial summaries}
\label{app:arteries-backmapping}

The backmapping audit covers all \(368{,}627\) finite generators. Root distance recomputed
from the stored centreline coordinates agrees with the appropriate persistence endpoint to
within \(1.5\times10^{-14}\) mm. No arterial-system mismatch or failure of the relevant
local-extremum condition was found.

For spatial plots, a generator at deposited coordinate \(x_{ij}\) is represented by
\(x_{ij}-r_{ic}\), where \(r_{ic}\) is the supplied root of its connected component.
This translation leaves the complete persistence calculation unchanged. It removes the
unregistered scan origin but does not correct rotation or scale, so all spatial statements
are root-relative rather than atlas-anatomical.

Using all finite local maxima as reference, the full radius distribution has median
approximately \(52.3\) mm; R1 has median \(78.75\) mm and interquartile range
\(76.19\)--\(81.93\) mm. Since the generator radius of a superlevel maximum is \(q+p\),
the actual shape of R1 predicts a relatively thin outer band after root centring. A
selected region spanning a more articulated set of \((q,p)\) values would not generally
have this geometry.

We retain one post-selection check for localization within that band. Each R1 generator
is compared with the five nearest non-selected local maxima in radius from the same
subject, arterial system, and connected component; the median fifth-control radius gap is
\(3.36\) mm and its \(95\)th percentile is \(7.92\) mm. First- and second-order
directional moments are averaged within subject and calibrated by \(99{,}999\)
subject-level random sign flips. Both comparisons give
\(p_{\mathrm{MC}}=10^{-5}\), and the conclusion is unchanged from one to twenty controls.
This check shows that the selected directions are not explained by radius alone; it is
not a second selection analysis or a test of an age-by-direction interaction.

The degree comparison was designed primarily to check whether selection was concentrated
at leaves. It was not: \(95.7\%\) of R1 generators and \(93.2\%\) of non-selected
local-maximum generators have degree two, with few terminal or branch vertices in either
group. A local radial maximum can occur on a smooth centreline segment whenever its
direction relative to the root changes from outward to inward.

All summaries here characterize a region selected by the simultaneous persistence-plane
analysis and are intended to guide subsequent anatomically registered or clinician-led
interpretation.

\subsection{Detailed comparison with previous analyses}
\label{app:arteries-literature}

The earliest CASILab ageing analyses used conventional vessel attributes within
anatomically predefined arterial systems \citep{bullitt2010effects}. A second strand
treats the observations as combinatorial or metric trees. Object-oriented formulations
\citep{wang2007object,aydin2009principal,aydin2012new,alfaro2014dimension,wang2012nonparametric}
establish age-associated variation in tree structure, while branching models
\citep{chang2013generalized,choudhury2018branch} find that branching probabilities
generally decrease with age, conditional on vessel characteristics and branch order.
Together with scale-space analysis \citep{shen2014functional} and correspondence-based
geometry \citep{skwerer2014tree}, these results show that the age association depends on
structural scale and on which geometric information the representation retains.

Persistent-homology analyses provide a complementary multiscale view.
\citet{bendich2016persistent,agerberg2025algebraic} found strong age associations and
showed that intermediate, not only longest, persistence ranges can carry substantial age
information. \citet{matuk2024topogeometric} found that an age association survives
normalization and elastic alignment of persistence landscapes, in contrast to a much less
stable sex effect. These works establish a reproducible multiscale signal but do not
combine inferential region selection with exact generator-level localization on the
three-dimensional centrelines.

Our raw contrasts agree with the broad finding that older subjects have fewer radial
reversals and less persistence mass. The system-length-standardized analysis isolates a
more specific feature: an age-associated decrease in ordinary, outward-to-inward
centreline reversals with \(71.2\le q\le85.9\) mm and \(0\le p\le14.7\) mm. At the other
end of the descriptor, the global class records a redistribution from larger to smaller
maximum root radii. These are root-relative geometric statements, not assignments to
registered anatomical vessels.

The peripheral radii are qualitatively consistent with the cortical-proximity effect of
\citet{skwerer2014tree}, which was found under cortical-landmark but not descendant
correspondence. The closest methodological precedent is \citet{guo2022statistical}, who
retained embedded geometry through elastic shape graphs and, after normalization for total
arterial length, visualized the edges with the largest fitted age-associated deformations.
That analysis requires cross-subject registration of graph elements. Our approach avoids
segment correspondence by defining radial reversals within subject and comparing them in
persistence space, but consequently cannot name an anatomical vessel. Longitudinal or
registered data would permit a more direct comparison of the two localization strategies.
\FloatBarrier

\bibliographystyle{abbrvnat}
\bibliography{paper_20260831}

\end{document}